\newtheorem{theorem}{Theorem}[section]
\newtheorem{proposition}[theorem]{Proposition}
\newtheorem{lemma}[theorem]{Lemma}
\newtheorem{remark}{Remark}[section]
\theoremstyle{definition}
\numberwithin{equation}{section}
\newcommand{\R}{\mathbb{R}}
\newcommand{\lap}{\bigtriangleup}
\renewcommand{\Re}{\mbox{Re}}
\newcommand{\D}{\mathcal D}
\newcommand{\ent}[1]{\lfloor #1\rfloor}
\newcommand{\an}[1]{\langle #1 \rangle}
\newcommand{\grad}{\bigtriangledown}
\newcommand{\Tk}{\mathcal{T}_{q_k,\zeta}}
\newcommand{\Jac}{\textrm{Jac}}
\begin{document}

\author{Federico Cacciafesta\footnote{Dipartimento di Matematica, Universit$\grave{\text{a}}$ degli studi di Padova,  Via Trieste, 63, 35131 Padova PD - Italy. {\em email:} cacciafe@math.unipd.it}, Anne-Sophie de Suzzoni\footnote{Universit\'e Paris 13, Sorbonne Paris Cit\'e, LAGA, CNRS ( UMR 7539), 99, avenue Jean-Baptiste Cl\'ement, F-93430 Villetaneuse, France. {\em email:} adesuzzo@math.univ-paris13.fr}, Diego Noja\footnote{Dipartimento di Matematica e Applicazioni, Universit\`a di Milano Bicocca,  via R. Cozzi, 55, 20125 Milano MI - Italy. {\em email:} diego.noja@unimib.it } }

\title{A Dirac field interacting with point nuclear dynamics}

\maketitle

\begin{abstract}
The system describing a single Dirac electron field coupled with classically moving point nuclei is presented and studied. The model is a semi-relativistic extension of corresponding time-dependent one-body Hartree-Fock equation coupled with classical nuclear dynamics, already known and studied both in quantum chemistry and in rigorous mathematical literature. We prove local existence of solutions for data in $H^\sigma$ with $\sigma\in[0,\frac32[$. In the course of the analysis a second new result of independent interest is discussed and proved, namely the construction of the propagator for the Dirac operator with several moving Coulomb singularities.
\end{abstract}

\section{Introduction}
The analysis of complex atomic matter behavior starting from first principles is nowadays a well developed subject, with a rich literature both on the theoretical and computational sides. In low energy regime there is often a good or excellent agreement between theoretical description and experimental results. Things are different in presence of heavy atoms, where relativistic contributions become essential to reliable calculations of spectral and other relevant properties of the involved systems. Notwithstanding the existence of several partly efficient computational strategies, the understanding of the subject from a theoretical and rigorous point of view is at present rather poor. This is due to the lack of a consistent many body Dirac theory, in contrast with the many body Schr\"odinger theory so successful in the non relativistic regime. It is not even clear, to give a basic example, the behavior of the system composed by two Dirac particles interacting via a Coulomb potential (see \cite{D} and the recent preprint \cite{DeckOel18}); for this elementary two particle system it is widely believed but not proved that the essential spectrum is given by the whole real line and there are no eigenvalues.\\ A possible and perhaps unavoidable way out of the difficulties caused by the spectral obstruction to a many body Dirac theory consists in resorting on Quantum Electrodynamics  to obtain an effective theory. This suggestive program, if theoretically satisfying and promising, is however at present far from being fully developed.\\ In view of this incomplete and uncertain state of affairs, in this paper we want to follow a less ambitious but however non trivial goal, that is to give the local well posedness for the dynamics of a single Dirac electron field interacting with nuclear matter described, as often in Quantum Chemistry, as $N$ moving point classical particles.\\
Namely, we will study the Cauchy problem
\begin{equation}\label{sist2}
\begin{cases}
\displaystyle
i\frac{\partial u}{\partial t}(t,x)=(\mathcal{D}+\beta)u(t,x)-\sum_{k=1}^N \frac{Z_k}{|x-q_k(t)|}u(t,x)+\left(|u|^2*\frac1{|x|}\right)(t,x)u(t,x),
\\
\displaystyle
m_k\frac{d^2q_k}{dt^2}(t)=-\nabla_{q_k} W_q(t)
\\
\displaystyle
u(0,\cdot)=u_0,\qquad q_k(0)=a_k,\qquad \frac{dq_k}{dt}(0)=b_k
\end{cases}
\end{equation} 
with $N\geq1$, where 
$$
W_q(t) = -\sum_{k=1}^N Z_k\an{u, \frac1{|x-q_k|}u} + \sum_{k\neq l} Z_kZ_l \frac1{|q_k - q_l|}.
$$
\noindent
We are considering an electron with unit mass; the units are chosen in order to have $\hbar=1$ and $c=1$. Here, $ \mathcal{D}+\beta$  represents the massive $3D$ Dirac operator; we recall that $\mathcal{D}$ is defined as $\displaystyle\D=i^{-1}\sum_{j=1}^3\alpha_j\partial_j$ where the $4\times4$ Dirac matrices are given by 
\begin{equation*}
\alpha_k=\left(\begin{array}{cc}0 & \sigma_k \\\sigma_k & 0\end{array}\right),\qquad
\beta=\left(\begin{array}{cc}I_2 & 0 \\0 & -I_2\end{array}\right)
\end{equation*}
and the $\sigma_k$ ($k=1,2,3$) are the Pauli matrices, given by
\begin{equation}
\sigma_1=\left(\begin{array}{cc}0 & 1 \\1 & 0\end{array}\right),\quad
\sigma_2=\left(\begin{array}{cc}0 &-i \\i & 0\end{array}\right),\quad
\sigma_3=\left(\begin{array}{cc}1 & 0\\0 & -1\end{array}\right).
\end{equation}
\vskip5pt
We briefly discuss the model, referring to \cite{LebLio} for a comprehensive account of the subject of rigorous derivation of atomic and molecular systems, including relativistic effects and to \cite{ELS08} for details on variational techniques in stationary problems.\\
The above system contains a partial differential equation of Dirac type representing the (single) electron cloud dynamics, coupled with ordinary differential equations ruling the motion of the nuclei. The latter are described as classical point particles. The coupling shows up in two different terms in the equations: firstly the Coulomb potential evaluated at the positions of the moving nuclei appears in the Dirac equation, and then in each classical Newton equation besides the inter-particle Coulomb interaction, a further force term containing the Dirac field is present. This force term represents the Coulomb potential (at the nucleus position) due to the charge density $u^*u$ associated to the electron field. A contribution of this kind is heuristically expected on the basis of so-called Hellman-Feynman's Theorem, and it is the analogue of a similar term in the non relativistic theory of atoms and molecules. 
Finally, we add to the Dirac equation a nonlinearity of Hartree type. In the Schr\"odinger theory the Hartree nonlinearity is an effect of a reduction from a many body theory, but we do not attempt here any theoretical justification of this term and we retain this contribution by pure analogy; we only mention that in the context of Dirac-Maxwell theory, this term appears naturally in the absence of magnetic field as a by-product of the decoupling of the equations (see \cite{chadglass76}).
For relevant rigorous results regarding well posedness of the analogous model in the Schr\"odinger setting we refer to \cite{canleb}, to the above \cite{LebLio} and to references therein.\\
We discuss briefly a last issue related to the choice of the model. In a completely relativistic model the classical nuclei should have a relativistic kinematics, and the (classical) electromagnetic potentials, including the magnetic vector potential, should solve the Maxwell equations, or the wave equation in a suitable gauge. In the semi-relativistic model presented here the dynamics of the heavy nuclei is consistent, at a first approximation, with the consideration of the instantaneous Coulomb potential only, while magnetic and retardation effects are neglected. The completely non relativistic analogous of system \eqref{sist2} has already been object of study in literature (see \cite{canleb}, \cite{Baud05}, \cite{Baud06} and is known to be globally well posed; for a related paper on nonlinear Schr\"odinger equations with moving Coulomb singularities see also \cite{ozayos}). We stress the fact that, in contrast with the Schr\"odinger case, a severe difficulty here is represented by the strong 
singularity produced by the moving nuclei: indeed, the Coulomb potential exhibits the same homogeneity of the (massless) Dirac operator or, in other words, it is critical with respect to the natural scaling of the operator. This is the source of several problems, especially from the point of view of dispersive dynamics: indeed, it is not known whether Strichartz estimates hold for the flow $e^{it{(\mathcal{D}+m_e\beta+\frac\nu{|x|}})}$, even in the case $m_e=0$ (we mention the papers \cite{cacser} in which a family of local smoothing estimates for such a flow is 
proved and \cite{cacfan} in which the same result is obtained in the case of Aharonov-Bohm fields), while it is interesting to notice that for the scaling critical non-relativistic counterpart, i.e. the Schr\"odinger equation with inverse square potential, the dispersive dynamics is now completely understood (see \cite{burplan} for Strichartz, and \cite{fanfel} for time-decay estimates in even more general settings). Related analysis of dispersive estimates for perturbed Dirac flows with applications to nonlinear models can be found in \cite{escovega}, \cite{cacmag}, \cite{caccub} and references therein. Anyway, in the present paper we bypass the lack of effective dispersive estimates, adopting the strategy already working in \cite{canleb}, which is essentially an application of Segal Theorem \cite{segal}. The key ingredients will be the construction of a $2$-parameters propagator associated to the time-dependent Hamiltonian, the fact that the non local Hartree term is Lipschitz continuous and finally a two 
stage fixed  point argument. This allows to prove at least existence of {\em local} solutions for system \eqref{sist2}.\\ As a final remark on the model, notice that a complete particle-field system should include the coupling with the electromagnetic field, and so Maxwell equations with sources. For a point particle this presumably entails serious difficulties, which in the case of the completely classical Maxwell-Lorentz system are well known and unsolved (see for example \cite{spohn} for a complete discussion and \cite{NP1, NP2} and references therein for a rigorous treatment of a related model).
\vskip2pt
\noindent
Before stating our main results, let us fix some useful notations.\\
{\bf Notations.} 
With $L^2$ and $H^\sigma$ we will denote the Lebesgue and Sobolev spaces $L^2(\mathbb{R}^3,\mathbb{C}^4)$ and $H^\sigma(\mathbb{R}^3,\mathbb{C}^4)$ respectively.\\
With $\mathcal{L}(X,Y)$ we will denote the space of bounded linear operators $A:X\rightarrow Y$ and we pose $\mathcal L(X)\equiv \mathcal L(X,X)\ .$\\
By the symbol $1_{N\geq 2}$ we mean the number $1$ if we are in the case of several nuclei, the number $0$ in the case of a single nucleus.\\
For the sake of brevity, when no ambiguity is possible, we will often omit the dependence $t\in[0,T]$ and $k=1,\dots N\ $ in writing expression such as $\displaystyle\sup_{k,t}$.
\\\\
\vskip2pt
\noindent
Our first result consists in showing existence of a two-parameters propagator associated to the time dependent Dirac Hamiltonian with moving Coulomb singularities
\begin{equation}\label{ndimham}
H(t)=\mathcal{D}+\beta+V(x,t) 
\end{equation}
where $\displaystyle V(x,t)=-\sum_{k=1}^N\frac{Z_k}{|x-q_k(t)|}$.\\
We will assume to be, when $N\geq 2$, in a {\em no-collision setting}; namely, we will require the initial positions $a_k$ to be well-separated, together with a bound on the velocities $\dot{q}_k(t)$ (hypotheses \ref{sepin} and first of \ref{assteo1}). 
\noindent
\begin{theorem}\label{teo1}
Assume that $|Z_k|<\frac{\sqrt3}2$ $\forall k$ and that
\begin{equation}\label{sepin}
\min \{|q_k(0)-q_l(0)| \; | \; k\neq l\} = 8\varepsilon_0
\end{equation} for some $\varepsilon_0>0$. Then if the nuclei trajectories $q_1,\cdots , q_N$ are $W^{2,1}([0,T])$ and there exists $C_{\dot q}$ (independent from $T$) such that
\begin{equation}\label{assteo1}
(1+T1_{N\geq 2})\sup_{k} \|\dot{q}_k\|_{L^\infty([0,T])}\leq C_{\dot q},\qquad \sup_{k} \|\ddot{q}_k(t)\|_{L^1([0,T])} <\infty
\end{equation} 
for some $T>0$, then the flow of the equation
$$
i\partial_t u = H(t) u
$$
is well defined and given by a family of operators $U_q(t,s)$ with 
$$U_q \in \mathcal C([0,T]^2,\mathcal L(H^\sigma))$$
for any $\sigma\in[0,\frac32)$ with $H^\sigma\rightarrow H^\sigma$ norms uniformly bounded in $t$, $s$, and $q$; $U_q(t,s)$ satisfies
$$
U_q(t,s) \circ U_q(s,r) = U_q(t,r), \ \ \ U_q(t,t)=\mathbb{I}\ ,
$$ 
$$
i\partial_t U_q(t,s) = H(t)U_q(t,s) \; , \; i\partial_s U_q(t,s) = - U_q(t,s)H(s)\ .
$$
Moreover if $q^{(1)} = (q_1^{(1)},\hdots ,q_N^{(1)})$ and $q^{(2)} = (q_1^{(2)},\hdots ,q_N^{(2)})$ belong to $\mathcal C^2([0,T])$, satisfy the above hypotheses and assuming  moreover that $q^{(1)}(0) = q^{(2)}(0) = (a_1,\hdots ,a_N)$, then there exists $C$ (independent from $T$) such that for all $t,s \in [0,T]^2$, we have 
$$
\|U_{q^{(1)}}(t,s)- U_{q^{(2)}}(t,s)\|_{H^{\sigma}\rightarrow H^{\sigma-1}} \leq  C \sup_{k} (1+1_{N\geq 2}T) \|(\dot{q_k}^{(1)})- (\dot{q_k}^{(2)})\|_{L^\infty}
$$
for any $\sigma\in[1,\frac32)$.
\end{theorem}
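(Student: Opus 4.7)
My plan splits the theorem into its two claims: existence and basic properties of the propagator $U_q$, and Lipschitz continuity in $q$. Both rely on Kato-type inequalities for the Coulomb potential in $\mathbb R^3$ (which is precisely why $|Z_k|<\sqrt 3/2$ ensures self-adjointness of $H(t)=\mathcal D+\beta+V(t)$ with form domain $H^{1/2}$), together with a multi-center refinement, obtained via a partition of unity around the well-separated nuclei, whose constants remain uniform on $[0,T]$ by virtue of the no-collision hypotheses \eqref{sepin}--\eqref{assteo1}.

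\textbf{Existence.} I would regularize $V$ by $V_\varepsilon(x,t)=-\sum_k Z_k(|x-q_k(t)|^2+\varepsilon^2)^{-1/2}$, a bounded form perturbation of $\mathcal D+\beta$ with a common invariant domain. The $W^{2,1}$-regularity of $q(t)$ transfers to strong continuity and $W^{1,1}$-boundedness of $t\mapsto V_\varepsilon(t)$, matching the hypotheses of the Segal/Kato theorem for time-dependent self-adjoint generators; this yields a propagator $U_q^\varepsilon(t,s)$, unitary on $L^2$ and bounded on $H^\sigma$ for $\sigma\in[0,3/2)$. Uniform-in-$\varepsilon$ $H^\sigma$-bounds then follow from Gronwall applied to $\|U_q^\varepsilon(t,s)u\|_{H^\sigma}^2$, the key step being control of the commutator $[V_\varepsilon(t),\langle\nabla\rangle^\sigma]$ via the multi-center Kato inequality. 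A Trotter--Kato-type limit $\varepsilon\to 0$ defines $U_q(t,s)$ and transfers the cocycle property, the generator equations, and the strong continuity from the regularized propagators.

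\textbf{Lipschitz estimate.} Duhamel gives
$$U_{q^{(1)}}(t,s)-U_{q^{(2)}}(t,s)=-i\int_s^t U_{q^{(1)}}(t,\tau)(V_{q^{(1)}}-V_{q^{(2)}})(\tau)U_{q^{(2)}}(\tau,s)\,d\tau.$$
Writing the potential difference as a line integral of gradients, $(V_{q^{(1)}}-V_{q^{(2)}})(x,\tau)=\sum_k Z_k\,\delta q_k(\tau)\cdot\int_0^1\nabla_x\frac{-1}{|x-q_k^{(r)}(\tau)|}dr$, with $q_k^{(r)}=rq_k^{(1)}+(1-r)q_k^{(2)}$ and $\delta q_k=q_k^{(1)}-q_k^{(2)}$, a spatial integration by parts in the dual pairing combined with Hardy produces a bilinear bound $\|V_{q^{(1)}}(\tau)-V_{q^{(2)}}(\tau)\|_{H^\sigma\to H^{\sigma-1}}\lesssim|\delta q(\tau)|$ for $\sigma\in[1,3/2)$. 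Combined with Part I and with the fundamental-theorem estimate $|\delta q(\tau)|\le\tau\|\dot q^{(1)}-\dot q^{(2)}\|_{L^\infty}$ (valid since $q^{(1)}(0)=q^{(2)}(0)$), this yields the $(1+T)\|\dot q^{(1)}-\dot q^{(2)}\|_{L^\infty}$ bound required for $N\ge 2$. In the $N=1$ case the estimate is $T$-independent, for which I would change to the comoving frame $\tilde u(x,t)=u(x+q(t),t)$: the transformed Hamiltonian $\tilde H(t)=\mathcal D+\beta-Z/|x|+i\dot q(t)\cdot\nabla_x$ has all its time dependence condensed into the first-order drift $i\dot q(t)\cdot\nabla_x$, so $\tilde H^{(1)}-\tilde H^{(2)}=i(\dot q^{(1)}-\dot q^{(2)})\cdot\nabla_x$ acts $H^\sigma\to H^{\sigma-1}$ with norm $\|\dot q^{(1)}-\dot q^{(2)}\|_{L^\infty}$, and unwinding the translation yields the sharper single-nucleus bound.

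\textbf{Main obstacle.} I expect the hardest step to be the uniform-in-$\varepsilon$ bound on $\|U_q^\varepsilon\|_{\mathcal L(H^\sigma)}$ near the critical endpoint $\sigma\nearrow 3/2$: Coulomb has the same scaling as $\mathcal D$, so the commutator $[V_\varepsilon(t),\langle\nabla\rangle^\sigma]$ is marginal and requires careful tracking of the time-varying multi-center separations; this is precisely where the no-collision/velocity hypotheses \eqref{sepin}--\eqref{assteo1} enter essentially. A secondary subtlety is that the gradient-based estimate for $V_{q^{(1)}}-V_{q^{(2)}}$ is sharpest at $\sigma=1$ (as an $H^1\to H^{-1}$ bound); extending it to the full $\sigma\in[1,3/2)$ range as $H^\sigma\to H^{\sigma-1}$ may require a Littlewood--Paley refinement or an interpolation combined with the $L^2$-unitarity of $U_q$.
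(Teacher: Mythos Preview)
Your comoving-frame idea for $N=1$ is exactly the paper's approach: set $v(t,x)=u(t,x+q(t))$, so that the transformed Hamiltonian $H_1(t)=\mathcal D+\beta-Z/|x|+i\dot q(t)\cdot\nabla$ has a \emph{fixed} Coulomb singularity and all time dependence sits in the first-order drift. The paper then feeds this directly into Kato's theory for nonautonomous evolution equations, using the Hardy--Rellich type bound $\|u/|x|\|_{H^{\sigma-1}}\lesssim\|u\|_{H^\sigma}$ for $\sigma\in[1,3/2)$ to verify $H_1(t)\in\mathcal L(H^\sigma,H^{\sigma-1})$. No regularisation $V_\varepsilon$ is used, and no commutator $[\langle\nabla\rangle^\sigma,V_\varepsilon]$ is ever estimated; the change of variables replaces all of that. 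For the Lipschitz bound the paper works with $H_1^{(1)}-H_1^{(2)}=i(\dot q^{(1)}-\dot q^{(2)})\cdot\nabla$ (exactly as you wrote) and compares the discrete product approximants $U_1(n,t,s)=\prod_k e^{-iH_1(k/n)/n}$ rather than appealing to Duhamel.

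The point you are missing is that the paper extends this same idea to $N\ge 2$. It introduces a \emph{local} diffeomorphism
\[
\phi(t,x)=x+\sum_{k}\zeta\!\Big(\frac{x-a_k}{\varepsilon_0}\Big)\,(q_k(t)-a_k),
\]
with $\zeta$ a cutoff supported in a ball of radius $2\varepsilon_0$; the separation hypothesis guarantees the supports are disjoint. After this change of variables the singularities are all frozen at the $a_k$, and the transformed Hamiltonian differs from $\mathcal D+\beta-\sum_k Z_k/|x-a_k|$ only by a drift term, a bounded remainder $R(t,x)$ supported away from the singularities, and small first-order perturbations controlled by $T\sup_k\|\dot q_k\|_\infty$. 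The $q$-difference of this transformed Hamiltonian is then manifestly bounded $H^\sigma\to H^{\sigma-1}$ by $(1+T)\sup_k\|\dot q_k^{(1)}-\dot q_k^{(2)}\|_\infty$.

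Your direct Duhamel route for $N\ge 2$ has a genuine gap precisely here. Writing $V_{q^{(1)}}-V_{q^{(2)}}$ as a line integral produces integrands of size $|x-q_k^{(r)}|^{-2}$, and multiplication by $|x|^{-2}$ is \emph{not} bounded $H^\sigma\to H^{\sigma-1}$ for $\sigma<3/2$: generalized Hardy gives $|x|^{-\alpha}:H^\alpha\to L^2$ only for $\alpha<3/2$, so $|x|^{-2}$ would require two full derivatives. Your ``integration by parts in the dual pairing'' can at best yield $H^1\to H^{-1}$ (split $|x|^{-2}=|x|^{-1}\cdot|x|^{-1}$ and apply Hardy to each factor), which loses two derivatives, not one, and the output space $H^{-1}$ is outside the range $[0,3/2)$ where you have control of $U_{q^{(1)}}(t,\tau)$. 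The paper's freezing-of-singularities trick is designed exactly to avoid ever differencing two Coulomb potentials at nearby centres.

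Your regularisation scheme for existence is a genuinely different route and is not obviously wrong, but the obstacle you flag is real: the commutator $[\langle\nabla\rangle^\sigma,1/|x|]$ is scaling-critical and the Kato--Ponce machinery does not apply since $\nabla(1/|x|)\notin L^\infty$. The paper sidesteps this entirely: once the singularity is frozen, the only thing one needs is the \emph{static} mapping property $1/|x|:H^\sigma\to H^{\sigma-1}$ (their Lemma~2.5, proved by an $\varepsilon$-regularisation and interpolation between Hardy and Rellich), and then Kato's abstract theorem does the rest.
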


\begin{remark}\label{selfadjrk}
We recall that
\begin{equation}\label{diraccoul}
\mathcal{D}+\beta\pm\frac Z{|x|}
\end{equation}
(and variants thereof) is a self-adjoint operator for $|Z|<\frac{\sqrt3}2$ on $H^1$ (see \cite{schmincke}); a distinguished  self-adjoint extension can actually be built also in the wider range $|Z|\leq 1$ (see \cite{estebanloss} and references therein; see also the recent review \cite{Gallone}). Similar results hold in the case of multi-centric Coulomb potentials but for atomic numbers $|Z_k|<\frac{\sqrt3}2$ (Levitan-Otelbaev theorem,  \cite{LO,klaus}), and we retain this condition along the paper without other comments. 

\end{remark}

\begin{remark}
By applying a standard continuation argument, the propagator $U_q(t,s)$ defined in Theorem \ref{teo1} can be extended to global times provided one assumes, instead of \eqref{assteo1}, global bounds on $|\dot{q}_k(t)|$, $|\ddot{q}_k(t)|$. However, in the subsequent analysis of the coupled dynamics defined by system \eqref{sist2}, these two terms can be bounded only locally in time, due to the absence of positive definite conserved quantities. Therefore we will only be able to obtain a local evolution in the coupled system.
\end{remark}

\begin{remark}
The proof of Theorem \ref{teo1} borrows ideas from \cite{katoyajim}, in which the authors prove a similar result in the case of the Dirac equation perturbed by the retarded Lienard-Wiechert potentials produced by relativistically moving nuclei. The main technical tool consists there in introducing a local Lorentz transformation depending on the particle trajectories which simultaneously freezes the position of the moving singularities, in such a way to change from moving Coulomb singularities to stationary ones, and then to resort on classical theory of self-adjointness for perturbations of the Dirac operator. 
We stress however that the difference in the model (the Lienard-Wiechert potentials discussed in \cite{katoyajim} are significantly more involved and require stronger assumptions) and our need of detailed estimates for subsequent analysis do not allow a reduction of the present result to the one in \cite{katoyajim}.
\end{remark}

\begin{remark}\label{structs}
The threshold $\sigma=\frac32$ in Theorem \ref{teo1} seems to be structural in the following sense: in order to apply Kato's results for the construction of the propagator associated to a time-dependent Hamiltonian $H(t)$, one of the (sufficient) conditions is $H(t)\in C([0,T]; \mathcal{L}(Y,X))$. Therefore, beyond the regularity in time, which will be guaranteed by assumptions  \eqref{assteo1}, one needs the Hamiltonian to be bounded from $H^{\sigma+1}$ into $H^{\sigma}$, and this fact will be true, as a consequence of generalized Hardy-Rellich inequality, only for $\sigma$ up to $\frac12$. Moreover, we stress the fact that the space $H^\frac32$ appears as a natural threshold of regularity for the eigenstates of the Dirac-Coulomb operator (see the Appendix), and this seems to confirm the optimality of our result.
\end{remark}

Then we study local well posedness for the Cauchy problem \eqref{sist}. For the sake of simplicity, we state this result in the simplified framework of a single nucleus first. In this case, the system writes (we consider, without loss of generality, the initial condition $q(0)=0$)
\begin{equation}\label{sist}
\begin{cases}
\displaystyle
i\frac{\partial u}{\partial t}(t,x)=(\mathcal{D}+\beta)u(t,x)-\frac{Z}{|x-q(t)|}u(t,x)+\left(|u|^2*\frac1{|x|}\right)(t,x)u(t,x),
\\
\displaystyle
m\frac{d^2q}{dt^2}(t)=\langle u(t) | \nabla \frac{Z}{|\cdot-q(t)|}|u(t)\rangle
\\
\displaystyle
u(0,\cdot)=u_0,\qquad q(0)=0,\qquad \frac{dq}{dt}(0)=\dot{	q}_0.
\end{cases}
\end{equation}
We are here using the Dirac bra-ket notation to denote 
\begin{equation}\label{nlclass}
\langle u(t) | \nabla \frac{Z}{|x-q(t)|}|u(t)\rangle=-Z\int_{\mathbb{R}^3} \langle u(t),u(t)\rangle_{\mathbb{C}^4}\frac{x-q(t)}{|x-q(t)|^3}.
\end{equation}
We get the following result.

\begin{theorem}\label{teo2}
Let $|Z|< \frac{\sqrt3}2$ and $\sigma\in[1,3/2)$. There exist $C_1$ and $C_2$ depending on $Z$ and $m$, such that for all $R\in \R_+$, all $u_0 \in H^\sigma$ such that $ \|u_0\|_{H^\sigma} \leq R$, and all initial conditions $q_0$ such that $|\dot q_0 |\leq C_1$,
then system \eqref{sist} admits a solution in $\mathcal C([0,T], H^\sigma(\R^3)) \times \mathcal C^2([0,T],\R^3)$, for $T = \frac1{C_2(1+R^2)}$;
\end{theorem}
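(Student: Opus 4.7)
The plan is a two-stage fixed-point argument in the spirit of \cite{canleb}, grounded on the propagator constructed in Theorem \ref{teo1}. For $T>0$ to be chosen small and constants $C_{\dot q}, M>0$ large enough, I introduce the closed metric space of admissible trajectories
\begin{equation*}
\mathcal Q_T = \bigl\{ q \in W^{2,1}([0,T],\R^3) : q(0)=0,\ \dot q(0)=\dot q_0,\ \|\dot q\|_{L^\infty}\leq C_{\dot q},\ \|\ddot q\|_{L^1}\leq M \bigr\}
\end{equation*}
endowed with $d(q_1,q_2) = \|\dot q_1 - \dot q_2\|_{L^\infty}$. For each $q\in \mathcal Q_T$ the hypothesis \eqref{assteo1} is satisfied, so Theorem \ref{teo1} yields the propagator $U_q(t,s)$ with uniform $\mathcal L(H^\sigma)$ bounds. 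The Dirac--Hartree equation is then recast in Duhamel form
\begin{equation*}
u(t) = U_q(t,0) u_0 - i \int_0^t U_q(t,s)\, F(u(s))\, ds, \qquad F(v) = \bigl(|v|^2 * \tfrac{1}{|x|}\bigr) v,
\end{equation*}
and solved by contraction on the ball of radius $2R$ in $C([0,T], H^\sigma)$: Hardy--Littlewood--Sobolev together with Sobolev embeddings imply that $F$ is locally Lipschitz on bounded $H^\sigma$-balls with constant $O(R^2)$, so the contraction closes as soon as $T\leq 1/(C_2(1+R^2))$, producing a unique $u_q\in C([0,T],H^\sigma)$.

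\textbf{Outer fixed point.} Next I define $\Phi : \mathcal Q_T \to W^{2,1}([0,T])$ by taking $\tilde q = \Phi(q)$ to be the solution of
\begin{equation*}
m \ddot{\tilde q}(t) = \langle u_q(t), \nabla \tfrac{Z}{|\cdot - q(t)|} u_q(t)\rangle, \qquad \tilde q(0) = 0,\ \dot{\tilde q}(0) = \dot q_0.
\end{equation*}
Hardy's inequality gives $|m\ddot{\tilde q}(t)| \leq |Z|\,\|u_q/|x-q|\|_{L^2}^2 \leq C R^2$, so $\Phi(\mathcal Q_T) \subset \mathcal Q_T$ provided $M \geq C R^2 T$ and $C_{\dot q} \geq C_1 + C R^2 T$, which, together with the smallness hypothesis $|\dot q_0|\leq C_1$, imposes $T \lesssim 1/(1+R^2)$. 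For the contraction I would subtract the Duhamel identities for $u_{q_1}, u_{q_2}$ with common initial data, insert the Lipschitz bound for $U_{q_1} - U_{q_2}$ in $\mathcal L(H^\sigma, H^{\sigma-1})$ from Theorem \ref{teo1}, and combine it with the $H^{\sigma-1}$-Lipschitz property of $F$; Gronwall then yields $\|u_{q_1} - u_{q_2}\|_{C([0,T],H^{\sigma-1})} \leq C T (1+R^2)\, d(q_1,q_2)$. The force difference in the Newton equation splits into a piece linear in $u_{q_1}-u_{q_2}$ (handled by the previous display) and one linear in $q_1 - q_2 = \int_0^\cdot (\dot q_1 - \dot q_2)$ (handled by $T\, d(q_1,q_2)$ via the smoothness of $q\mapsto \nabla 1/|\cdot - q|$ tested against the uniformly bounded charge density $|u_q|^2$); integrating twice in time then gives $d(\Phi(q_1),\Phi(q_2)) \leq C T^2 (1+R^2)^\alpha d(q_1,q_2)$ for some $\alpha$, which is strictly contractive once $T = 1/(C_2(1+R^2))$ with $C_2$ large enough. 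Banach's fixed-point theorem produces the desired solution.

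\textbf{Main obstacle.} The delicate technical point is the control, in the low-regularity norm $H^{\sigma-1}$, of the singular bilinear integrals appearing both in the Hartree nonlinearity and in the Coulombic force $-Z\int |u|^2 (x-q)/|x-q|^3\, dx$: the kernel $|x-q|^{-2}$ is $H^1$-critical for Hardy, leaving very thin room for fractional Hardy once $\sigma$ approaches its lower limit. I expect to handle these terms via the factorization $|u_1|^2 - |u_2|^2 = u_1\overline{(u_1-u_2)} + (u_1 - u_2)\bar u_2$, the smoothing provided by Hardy--Littlewood--Sobolev on the Newton potential $|u|^2 * 1/|x|$, and interpolation between the smallness in $H^{\sigma-1}$ afforded by Theorem \ref{teo1} and the uniform $H^\sigma$ bound from the inner fixed point; carefully tracking the resulting powers of $R$ and $T$ through these singular-integral estimates is precisely what yields the form $T = 1/(C_2(1+R^2))$ of the local time of existence.
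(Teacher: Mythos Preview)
Your inner fixed point (contraction for $u$ at fixed $q$ via Duhamel and the $H^\sigma$-bounds on $U_q$) matches the paper. The genuine gap is in the outer loop: you aim for a \emph{contraction} on $\mathcal Q_T$ and invoke Banach, whereas the paper uses \emph{Schauder}. This is not a stylistic choice. For $\sigma\in[1,3/2)$ the trajectory map is \emph{not} Lipschitz in any topology you have access to; it is only H\"older of exponent $2(\sigma-1)<1$ (Proposition~\ref{regp}).

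Both pieces of the force difference that you single out fail to be Lipschitz. For the piece ``linear in $q_1-q_2$'' you need
\[
\int |u_q|^2 \Bigl(\frac{x-q_1}{|x-q_1|^3}-\frac{x-q_2}{|x-q_2|^3}\Bigr)\,dx,
\]
and extracting a full factor $|q_1-q_2|$ forces control of $\int |u_q|^2\,|x-q|^{-3}\,dx$, i.e.\ $u_q\in H^{3/2}$ by generalized Hardy---precisely the regularity you do not have. Interpolation only gives a factor $|q_1-q_2|^{2\sigma-2}$ (this is term $II$ in the paper's proof). For the piece ``linear in $u_{q_1}-u_{q_2}$'' the obstruction is the same: Theorem~\ref{teo1} controls $u_{q_1}-u_{q_2}$ only in $H^{\sigma-1}$ (one derivative is lost), while the other factor lies in $H^\sigma$; the admissible Hardy exponents sum to $(\sigma-1)+\sigma=2\sigma-1<2$ and do not absorb the $|x-q|^{-2}$ kernel. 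The paper closes this via a three-way H\"older splitting (terms $I$, $III$) that again yields exponent $2(\sigma-1)$ rather than $1$.

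Consequently the outer map is continuous but not contractive. The paper therefore works in a ball carrying a uniform $W^{2,1}$ bound, hence compact in $\mathcal C^1$, and concludes by Schauder (Proposition~\ref{prop-schauder}). This is also why Theorem~\ref{teo2} asserts only \emph{existence} of a solution: uniqueness is genuinely open in this regularity range, as the paper itself remarks after the statement of Theorem~\ref{teo3}.
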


The analogous result in the multi-nuclear case is then the following

\begin{theorem}\label{teo3}
Let $N\geq2$, $|Z_k|< \frac{\sqrt 3}{2}$ for all $k=1,\dots N$ and $\sigma\in[1,3/2)$. There exist $C_1$ and $C_2$ depending on $(Z_k)_k$ and $(m_k)_k$ and $\varepsilon_0$, such that for all $R\in \R_+$, all $u_0 \in H^\sigma$ such that $ \|u_0\|_{H^\sigma} \leq R$, and all initial conditions $(q_k(0))_k$ satisfying \eqref{sepin} and all vectors $\dot{q}_0$ such that $|\dot q_0 |\leq C_1$,
 then system \eqref{sist} admits a solution in $\mathcal C([0,T], H^\sigma(\R^3)) \times \mathcal C^2([0,T],\R^3)$, for $T = \frac1{C_2(1+R^2)}$;
\end{theorem}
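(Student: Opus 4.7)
The strategy is the two-stage (Segal-type) fixed point argument sketched in the introduction, adapted for $N$ nuclei. Fix $R$ and work in the following closed subsets of Banach spaces. Let $\mathcal{Q}_T$ be the set of trajectories $q=(q_1,\dots,q_N)\in \mathcal{C}^2([0,T],(\mathbb{R}^3)^N)$ such that $q_k(0)=a_k$, $\dot q_k(0)=\dot q_{k,0}$, $\sup_k\|\dot q_k\|_{L^\infty}\le 2C_1$, $\sup_k\|\ddot q_k\|_{L^\infty}\le M$, and the no-collision condition $\min_{k\ne l,t}|q_k(t)-q_l(t)|\ge 4\varepsilon_0$ is preserved. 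For $T$ sufficiently small (depending on $C_1$, $\varepsilon_0$, $M$) the latter two conditions are guaranteed by the velocity and initial-separation bounds. Let $\mathcal{B}_T^R$ be the ball of radius $2R$ in $\mathcal{C}([0,T],H^\sigma)$ centered at $U_q(\cdot,0)u_0$ (which is bounded in $H^\sigma$ uniformly by Theorem \ref{teo1}). The hypotheses of Theorem \ref{teo1} are met on $\mathcal{Q}_T$ for $T$ small enough, so the propagator $U_q(t,s)$ is available throughout.

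\textbf{Inner fixed point (electron field).} For $q\in\mathcal{Q}_T$ fixed, define
\[
(\Psi_q u)(t)=U_q(t,0)u_0 -i\int_0^t U_q(t,s)\,F(u(s))\,ds,\qquad F(u)=\Bigl(|u|^2*\tfrac{1}{|x|}\Bigr)u.
\]
Since $H^\sigma\hookrightarrow L^6\cap L^\infty_{\text{loc}}$ for $\sigma\ge 1$, Hardy--Littlewood--Sobolev together with the algebra/multiplication property of $H^\sigma$ give that $F:H^\sigma\to H^\sigma$ is locally Lipschitz, with Lipschitz constant $\lesssim R^2$ on the ball of radius $2R$. Combined with the uniform boundedness of $U_q(t,s)$ on $H^\sigma$ from Theorem \ref{teo1}, $\Psi_q$ is a contraction on $\mathcal{B}_T^R$ for $T\le c/(1+R^2)$, producing a unique $u_q\in\mathcal{C}([0,T],H^\sigma)$. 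A routine computation shows the map $q\mapsto u_q$ is Lipschitz from $\mathcal{Q}_T$ (with the $W^{1,\infty}$ norm on velocities) into $\mathcal{C}([0,T],H^{\sigma-1})$, by writing
\[
u_{q^{(1)}}-u_{q^{(2)}}=(U_{q^{(1)}}-U_{q^{(2)}})u_0 -i\int_0^t\bigl(U_{q^{(1)}}-U_{q^{(2)}}\bigr)F(u_{q^{(1)}})\,ds-i\int_0^t U_{q^{(2)}}\bigl(F(u_{q^{(1)}})-F(u_{q^{(2)}})\bigr)ds,
\]
using the quantitative Lipschitz estimate of Theorem \ref{teo1} for the difference of propagators in $H^\sigma\to H^{\sigma-1}$ norm, and Grönwall in the $H^{\sigma-1}$ norm for the last term.

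\textbf{Outer fixed point (nuclear trajectories).} Given $q\in\mathcal{Q}_T$, define $\Phi(q)=\tilde q$ by integrating twice Newton's equations with the force
\[
m_k\ddot{\tilde q}_k(t)=-Z_k\int_{\mathbb{R}^3}\langle u_q(t),u_q(t)\rangle_{\mathbb{C}^4}\frac{x-q_k(t)}{|x-q_k(t)|^3}dx-\sum_{l\ne k}Z_kZ_l\frac{q_k(t)-q_l(t)}{|q_k(t)-q_l(t)|^3},
\]
with initial data $(a_k,\dot q_{k,0})$. The electronic integral is bounded, uniformly in $q\in\mathcal{Q}_T$, by $C\|u_q\|_{H^1}^2\lesssim R^2$ using the decomposition $|x-q_k|^{-2}\in L^{3/2}+L^4$ and Hölder/Sobolev; the inter-nuclear term is bounded by the no-collision condition, giving $\|\ddot{\tilde q}_k\|_{L^\infty}\le M$ for a suitable $M=M(R,\varepsilon_0,(Z_k),(m_k))$. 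Choosing $C_1$ small enough compared to $\varepsilon_0$ and $T\le c/(1+R^2)$ small enough, the velocity stays below $2C_1$ and the minimal distance stays above $4\varepsilon_0$, so $\Phi$ maps $\mathcal{Q}_T$ into itself. For contractivity, the difference of forces is controlled by $\|u_{q^{(1)}}-u_{q^{(2)}}\|_{H^{\sigma-1}}$ (whence the need $\sigma\ge 1$, the integrand lying in $H^{1-\sigma}$) plus direct Lipschitz control of $|q_k-q_l|^{-3}(q_k-q_l)$ in the no-collision regime; combining with the Lipschitz estimate from the inner step and integrating twice in $t$ yields a contraction factor $\lesssim T^2(1+R^2)$, hence contraction for $T$ small.

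\textbf{Main obstacle.} The delicate point is the interplay between the low-regularity Lipschitz estimate of Theorem \ref{teo1}, which only holds in $H^\sigma\to H^{\sigma-1}$ and forces $\sigma\ge 1$, and the need to control the electronic force term $\int \langle u,u\rangle (x-q_k)|x-q_k|^{-3}dx$, which is critical in $H^{1/2}$ and therefore demands at least $H^1$ control of $u$; the range $\sigma\in[1,3/2)$ is precisely what makes both estimates available simultaneously. The second subtlety is quantitative bookkeeping of all constants so that the smallness of $C_1$ (initial velocity) propagates into a lower bound on the minimal inter-nuclear distance throughout $[0,T]$, ensuring we remain in the regime where Theorem \ref{teo1} applies and hypothesis \eqref{assteo1} is uniformly satisfied along the iterates.
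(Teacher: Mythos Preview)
Your inner fixed point argument is correct and matches the paper. The outer step, however, has a genuine gap: the map $\Phi$ on trajectories is \emph{not} a contraction in the range $\sigma\in[1,3/2)$, and the paper does not claim it is. Your key assertion is that the difference of electronic forces is controlled by $\|u_{q^{(1)}}-u_{q^{(2)}}\|_{H^{\sigma-1}}$ because ``the integrand lies in $H^{1-\sigma}$''. That integrand is $\frac{x-q_k}{|x-q_k|^3}\,u_q$, and to place it in $H^{1-\sigma}$ you would need, by duality and generalized Hardy, control of $\|u_q/|x-q_k|^{2-s}\|_{L^2}$ with $s=\sigma-1$, i.e.\ $u_q\in H^{3-\sigma}$. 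Since $3-\sigma>\sigma$ precisely when $\sigma<3/2$, this fails throughout the admissible range. In fact the paper shows in Proposition~\ref{regp} that the force difference is only \emph{H\"older} of exponent $2(\sigma-1)\in(0,1)$ in $\sup_k\|\dot q_k^{(1)}-\dot q_k^{(2)}\|_{L^\infty}$; the same obstruction appears both in the terms carrying $u_{q^{(1)}}-u_{q^{(2)}}$ (terms $I$, $III$) and in the term coming from the explicit $q_k$-dependence of the kernel $\frac{x-q_k}{|x-q_k|^3}$ (term $II$), which you also did not account for. This is not an artifact of the estimates: the remark after the statement of the theorem explains that Lipschitz control---and hence a contraction giving uniqueness---would require $\sigma>3/2$, which is incompatible with the construction of $U_q$.

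The paper's remedy is to replace your contraction by a \emph{Schauder} fixed point (Proposition~\ref{prop-schauder}): one works in a convex set $B$ of trajectories bounded in $\mathcal C^2$ and equips it with the $\mathcal C^1$ topology, in which $B$ is compact; stability of $B$ under $P$ is Proposition~\ref{stableprop}, and continuity of $P$ follows from the H\"older estimate of Proposition~\ref{regp}. This yields existence of a fixed point but, consistently with the statement of Theorem~\ref{teo3}, no uniqueness.
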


We give some remarks on these results.

\begin{remark}
The threshold $\sigma=\frac32$ is a consequence of Theorem \ref{teo1}, as one needs the two parameter propagator constructed before in order to prove existence of a solution. We moreover notice that this threshold appears again in the nonlinear model: indeed, in order to prove existence of solutions for equation \eqref{sist2} one needs to prove regularity properties for the nonlinear term \eqref{nlclass} which, as a matter of fact, turns to be H\"older continuous for $u\in H^\sigma$ for $\sigma<\frac32$, and would become Lipschitz continuous for $\sigma>\frac32$. Both these facts are a consequence, again, of generalized Hardy inequality. This means that if one was able to construct some propagator on $H^\sigma$ for $\sigma>\frac32$, this would yield not only existence of solutions for system \ref{sist2}, but also {\em local well-posedness} (i.e. uniqueness and continuous dependance on the initial data). Unfortunately, as discussed in Remark \ref{structs}, this threshold seems to be structural. To overcome these issues one can think to modify the setting of the problem working with weighted Sobolev spaces or to modify the model regularizing the singularity in the Coulomb potential. These developments will be the object of future work.
\end{remark}

\begin{remark}\label{wrk}
If we look only at the nonlinear Dirac equation, our $H^\sigma$ assumption on the regularity of the initial condition is well above the critical threshold required by the scaling (see e.g. \cite{scatt}, in which the authors study global well-posedness and scattering for the Dirac equation with a nonlocal nonlinear term of the form $F(u)=(|x|^a*|u|^{p-1})u$ relying on Strichartz estimates). Nonetheless, our high regularity requirement seems to be unavoidable if one wants to deal with the classical Newtonian dynamics for the nuclei. Moreover, let us point out that the coupled system \eqref{sist2} does not exhibit any scaling law even in the case of massless electrons.
\end{remark}

\begin{remark}
It is interesting to compare Theorem \ref{teo2} with its non relativistic counterpart, i.e. Theorem 1 in \cite{canleb}. In that case the authors prove global well posedness for the Cauchy problem (i.e. the existence of a solution for any time $t>0$) for initial data in the space $H^2$; to do this, they first prove local well posedness and then extend the solution using energy conservation of the system. This strategy does not work for the Dirac equation (and thus in the present contest), as the associated energy is not positive, and therefore cannot be used to control any $H^\sigma$ norm. This is the ultimate reason why we are only able to obtain local well posedness for system \eqref{sist}. 
\end{remark}

\begin{remark} All the constants in Theorem \ref{teo3} may depend on $\varepsilon_0$. In particular, the time of existence should behave like $\varepsilon_0^\gamma$ for some $\gamma > 0$. The power $\gamma$ that one can compute while performing the proof does not seem physically relevant, hence we did not keep track of it during the proof. This smallness on $T$ may be replaced by a smallness assumptions on $Z_k$, in this case, we can have a time of existence proportional to $\varepsilon_0$.\end{remark}
\noindent
We give a brief outline of the structure of the paper.\\ Section \ref{constprop} will be devoted to the proof of Theorem \ref{teo1}, i.e. to the construction of the $2$-parameter propagator, first in the case of a single nucleus and then with several ones. Various properties of the propagator are derived: firstly, its original definition can be extended from $L^2$ to $H^\sigma$ for $\sigma<\frac32$ as a consequence of Kato's theory on two-parameter propagators, secondarily a result of continuous dependence on the trajectory is given. The case of several nuclei is more involved but the results are analogous to the single nucleus case when suitable conditions on the trajectories, preventing particles are close to collisions, are imposed. \\ 
Section \ref{proof} will be dedicated to the proofs of Theorems \ref{teo2} and \ref{teo3}, simultaneously. We summarize the strategy as follows: the solution map is first considered as acting on the electron field $u$ for every trajectory $q$, using a contraction argument in $H^\sigma$. Then a Schauder fixed point argumentis performed on the (integrated) Newton equation for the nucleus trajectory, giving the final result. The properties of the solution map depend in a crucial way on the previously proved results for the non autonomous propagator.

\medskip
{\bf Acknowledgments.} We are grateful to prof. \'Eric S\'er\'e for having introduced us to the present problem and for several enlightening discussions on the topic, to  Matteo Gallone for discussions and comments and to Jonas Lampart for pointing out a mistake in our original argument, that led to the present version of the paper.

\section{The Dirac-Coulomb propagator with moving singularities}\label{constprop}

We will present in the next subsection the proof of Theorem \ref{teo1} in the case of a single nucleus, that will be divided in several steps, as the strategy is clearer in this case; afterwards, in subsection \ref{multi}, we shall present all the necessary modifications needed in order to deal with the case of several nuclei. 

\subsection{One nucleus}\label{onneuc}

In this case, the time-dependent Hamiltonian reads
\begin{equation}\label{1dham}
H(t) = \D +\beta- \frac{Z}{|x-q(t)|}.
\end{equation}
Throughout this subsection, we will always assume that 
\begin{equation}\label{asszone}
|Z|<\frac{\sqrt{3}}2
\end{equation}
which, as discussed in Remark \ref{selfadjrk}, ensures an essentially self-adjoint (static) Dirac-Coulomb operator. Notice the following relations
$$
\|f\|_{\dot H^1}^2 = \int_{\R^3}|\nabla f|^2 \cong \int_{\R^3} |\D f|^2 
$$
and, due to the anticommutation of the Dirac matrices,
$$
\|f\|_{H^1}^2=\int_{\R^3}|\nabla f|^2+\int_{\R^3}|f|^2\cong \int_{\R^3} |(\D+\beta)f|^2.
$$

We split the proof into several Lemmas.

\begin{lemma}\label{lem-changevar} Let $T \in \R^+$, $u \in \mathcal C([0,T],  H^\sigma)\cap \mathcal C^1([0,T],H^{\sigma-1)}$ for $\sigma\geq 1$, $q\in \mathcal C^1([0,T], \R^3)$ and set  $v(t,x) = u(t, x+ q(t))$.Then $u$ solves 
\begin{equation}\label{origeq}
i\partial_t u = H(t) u
\end{equation}
if and only if $v$ solves
\begin{equation}\label{modifeq}
i\partial_t v  = H_1(t) v
\end{equation}
where $\displaystyle H_1(t) = \D + \beta- \frac{Z}{|x|} +i \dot{q}(t) \cdot \nabla$.
\end{lemma}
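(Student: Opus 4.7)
The approach is a direct computation via the chain rule, exploiting two structural facts: the free Dirac operator $\mathcal D+\beta$ has constant coefficients and hence commutes with spatial translations, while the Coulomb singularity is recentered to the origin by the transformation $v(t,x)=u(t,x+q(t))$. So one should expect no substantive obstacle, only a verification that the formal calculation makes sense in the functional setting stated.

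First I would differentiate $v$ in time, obtaining
\begin{equation*}
\partial_t v(t,x) = (\partial_t u)(t,x+q(t)) + \dot q(t)\cdot(\nabla u)(t,x+q(t)).
\end{equation*}
Since $(\nabla u)(t,x+q(t))=\nabla v(t,x)$ by the chain rule, this rewrites as $\partial_t v = (\partial_t u)(t,\cdot+q(t)) + \dot q(t)\cdot\nabla v$. On the right-hand side of \eqref{origeq} evaluated at $x+q(t)$, translation invariance of $\mathcal D$ and $\beta$ yields $(\mathcal D u)(t,\cdot+q(t))=\mathcal D v$ and $(\beta u)(t,\cdot+q(t))=\beta v$, while the Coulomb potential reduces to $Z/|x+q(t)-q(t)|=Z/|x|$. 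Substituting $i\partial_t u(t,\cdot+q(t)) = (H(t)u)(t,\cdot+q(t))$ into the chain-rule identity and rearranging gives
\begin{equation*}
i\partial_t v = \left(\mathcal D + \beta - \frac{Z}{|x|}\right)v + i\dot q(t)\cdot \nabla v = H_1(t) v.
\end{equation*}
The converse implication follows identically by running the argument backwards with $u(t,x)=v(t,x-q(t))$.

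For the regularity part, translations act as a strongly continuous group of isometries on $H^\sigma$, so continuity of $t\mapsto q(t)$ transfers $u\in\mathcal C([0,T], H^\sigma)$ to $v\in\mathcal C([0,T], H^\sigma)$; the same argument applied to $\partial_t u\in\mathcal C([0,T], H^{\sigma-1})$, combined with continuity of $\dot q$ and boundedness of $\nabla:H^\sigma\to H^{\sigma-1}$, yields $\partial_t v\in\mathcal C([0,T], H^{\sigma-1})$. The only point requiring care is the interpretation of the Coulomb term in $H^{\sigma-1}$: by generalized Hardy's inequality, multiplication by $1/|x|$ is bounded from $H^\sigma$ into $H^{\sigma-1}$ for $\sigma\in[1,3/2)$ (this is precisely the threshold which also appears in Remark \ref{structs}), so every term in \eqref{modifeq} is a well-defined element of $H^{\sigma-1}$ and the equivalence is justified in that space.
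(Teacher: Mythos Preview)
Your proposal is correct and follows exactly the approach the paper has in mind: the paper's proof reads simply ``Straightforward computation,'' and what you have written is precisely that computation spelled out in detail, together with some regularity bookkeeping that the paper omits. The only caveat is that your closing remark on the Hardy bound restricts to $\sigma\in[1,3/2)$ whereas the lemma is stated for all $\sigma\geq 1$; this is harmless here since the paper only uses the lemma in that range anyway, but strictly speaking the formal equivalence of the two equations holds (e.g.\ as an identity in $\mathcal D'$ or in $L^2$ via the standard Hardy inequality) without that restriction.
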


\begin{proof} Straightforward computation. \end{proof}

\begin{lemma}\label{lem-pert} 
Let $T\in\mathbb{R}^+$. Assume \eqref{asszone} and that
\begin{equation}\label{smallvelone}
\sup_{t\in[0,T]}|\dot{q}(t)|\leq R_Z
\end{equation}
for some suitably small constant $R_Z$. Then there exists a constant $C$ such that for all $t\in[0,T]$
\begin{equation}\label{isoiso}
\frac1C\|f\|_{H^1}\leq \|H_1(t)f\|_{L^2}\leq C\|f\|_{H^1}.
\end{equation}
In particular, for every $t\in[0,T]$, $H_1(t)$ is an isomorphism from $H^1$ to $L^2$.

\end{lemma}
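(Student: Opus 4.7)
The plan is to regard $H_1(t)=H_0+i\dot q(t)\cdot\nabla$ as a small bounded perturbation of the stationary Dirac--Coulomb operator $H_0:=\mathcal{D}+\beta-Z/|x|$. By Remark~\ref{selfadjrk} together with \eqref{asszone}, $H_0$ is self-adjoint on $L^2$ with domain exactly $H^1$; standard Dirac--Coulomb spectral theory (Weyl's theorem places $\sigma_{\mathrm{ess}}(H_0)=(-\infty,-1]\cup[1,\infty)$, and the Sommerfeld formula places the eigenvalues inside the gap $(-1,1)$ strictly away from $0$) gives $0\in\rho(H_0)$. This is the only spectral input beyond Remark~\ref{selfadjrk}.

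For the upper half of \eqref{isoiso} I would first use the identity $(\mathcal{D}+\beta)^2=-\Delta+I$ (consequence of $\alpha_j^2=I$, $\{\alpha_j,\alpha_k\}=2\delta_{jk}I$ and $\{\alpha_j,\beta\}=0$), which on Fourier side yields the exact equality $\|(\mathcal{D}+\beta)f\|_{L^2}=\|f\|_{H^1}$. Combined with the Hardy inequality $\||x|^{-1}f\|_{L^2}\leq 2\|\nabla f\|_{L^2}$ and the hypothesis \eqref{smallvelone}, the triangle inequality gives
\begin{equation*}
\|H_1(t)f\|_{L^2}\leq \bigl(1+2|Z|+R_Z\bigr)\|f\|_{H^1},
\end{equation*}
uniformly in $t\in[0,T]$.

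For the lower bound I would first establish it for $H_0$ alone: self-adjointness of $H_0$ on $H^1$ together with the closed graph theorem gives equivalence of the graph norm with $\|\cdot\|_{H^1}$, while $0\in\rho(H_0)$ yields $\|f\|_{L^2}\leq\|H_0^{-1}\|_{L^2\to L^2}\|H_0 f\|_{L^2}$. Combining these two facts produces a constant $c_Z>0$ with $\|H_0 f\|_{L^2}\geq c_Z\|f\|_{H^1}$. Now the convection term $i\dot q(t)\cdot\nabla$ satisfies $\|i\dot q(t)\cdot\nabla f\|_{L^2}\leq R_Z\|f\|_{H^1}$ by \eqref{smallvelone}, so the triangle inequality gives $\|H_1(t)f\|_{L^2}\geq(c_Z-R_Z)\|f\|_{H^1}$ uniformly in $t$; choosing $R_Z\leq c_Z/2$ in the statement of the lemma closes \eqref{isoiso}.

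Finally, the two-sided bound \eqref{isoiso} already makes $H_1(t):H^1\to L^2$ injective with closed range, so only surjectivity is left. I would factor $H_1(t)=H_0\bigl(I+H_0^{-1}\,i\dot q(t)\cdot\nabla\bigr)$ on $H^1$: since $H_0^{-1}:L^2\to H^1$ is bounded with norm $c_Z^{-1}$ (by what precedes) and $i\dot q(t)\cdot\nabla:H^1\to L^2$ has operator norm at most $R_Z$, the endomorphism $H_0^{-1}\,i\dot q(t)\cdot\nabla$ of $H^1$ has norm at most $R_Z/c_Z<1$; a Neumann series then inverts it, hence inverts $H_1(t)$. The main conceptual point is isolating the convection term as small with respect to the $H^1$-to-$L^2$ graph bound of $H_0$; the delicate (but classical) spectral statement $0\in\rho(H_0)$ is what fixes the threshold $R_Z$ in terms of $Z$.
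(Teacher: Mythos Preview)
Your proof is correct and follows essentially the same approach as the paper: both treat $H_1(t)$ as a small perturbation of the static Dirac--Coulomb operator $H_0=\mathcal{D}+\beta-Z/|x|$, invoke that $H_0$ has domain $H^1$ and that $0\notin\sigma(H_0)$ to get the two-sided bound for $H_0$, and then absorb the convection term $i\dot q(t)\cdot\nabla$ via the smallness hypothesis \eqref{smallvelone}. The paper's version is simply more terse (citing Thaller and the open mapping principle in place of your explicit Hardy/graph-norm/Neumann-series steps), but the underlying argument is the same.
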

\begin{proof}
Inequality \eqref{isoiso} is a consequence of general theory for Dirac-Coulomb operator. In particular, Theorem 4.4 in \cite{thaller} ensures indeed that the domain of the operator $\D + \beta- \frac{Z}{|x|}$ (i.e. the "static" Dirac-Coulomb operator) coincides with the domain of the free Dirac operator, which is $H^1$. Moreover, it is known that $0$ is not in the spectrum of $\D + \beta- \frac{Z}{|x|}$ therefore $\D + \beta- \frac{Z}{|x|}$ is an isomorphism between its domain with $H^1$ topology and $L^2$. Indeed it is a bijective linear map from the domain into $L^2$, continuous for the topologies $H^1$ and $L^2$, so by the open mapping principle its inverse $(\D + \beta- \frac{Z}{|x|})^{-1}$ is continuous for the $L^2$ and $H^1$ topology. As then $H_1(t)=\D + \beta- \frac{Z}{|x|}+i\dot{q}(t).\nabla$, 
under our assumption on $|\dot{q}(t)|$ the additional term can be treated as a (bounded) perturbation, and then \eqref{isoiso} holds.

\end{proof}

Lemma \ref{lem-pert} will be enough to construct, through general Kato's Theory, the two parameters propagator associated to the Hamiltonian $H_1(t)$ on the space $H^1$; anyway, we are actually able to extend this propagator to some higher Sobolev spaces, namely for any $\sigma<3/2$; we stress the fact that we need to go above the regularity of the domain of the Dirac-Coulomb operator, which is $H^1$, and thus this step requires some additional work.

We start by providing some fundamental functional inequalities. The first one, is the following generalized Hardy inequality.

\begin{proposition}\label{genhardy}
For any $\sigma\in[0,\frac{d}2)$ there exists a constant $C$ such that for any $f\in \dot{H}^\sigma(\R^d)$ 
\begin{equation}\label{hardgen}
\int_{\R^d}\frac{|f(x)|^2}{|x|^{2\sigma}}dx\leq C\|f\|_{\dot{H}^\sigma}.
\end{equation}
\end{proposition}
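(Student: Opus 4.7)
The case $\sigma=0$ is immediate (it is an equality). For $\sigma\in(0,d/2)$, the plan is to recast \eqref{hardgen} as the $L^2$-boundedness of an explicit integral operator and apply Schur's test with a power weight. Setting $g=(-\Delta)^{\sigma/2}f$, one has $\|g\|_{L^2}=\|f\|_{\dot H^\sigma}$ and $f=c_{d,\sigma}I_\sigma g$, where $I_\sigma$ denotes the Riesz potential of order $\sigma$ with kernel $|x-y|^{\sigma-d}$. The desired estimate is then equivalent to showing that the operator
$$
Tg(x)=\int_{\R^d}K(x,y)\,g(y)\,dy,\qquad K(x,y)=|x|^{-\sigma}|x-y|^{\sigma-d},
$$
is bounded on $L^2(\R^d)$.

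I would verify this by Schur's test with the symmetric weight $\phi(x)=\psi(x)=|x|^s$ for a suitable exponent $s$. By scaling,
$$
\int_{\R^d}\frac{|y|^s}{|x-y|^{d-\sigma}}\,dy=A(s)\,|x|^{s+\sigma},
$$
whenever the integral converges, which requires $s\in(-d,-\sigma)$ (to handle $y\to 0$, $y\to x$, and $y\to\infty$); this gives $\int K(x,y)|y|^s dy=A(s)|x|^s$. Symmetrically, $\int K(x,y)|x|^s dx=B(s)|y|^s$ provided $s\in(\sigma-d,0)$. The intersection of the two admissible windows is $(\sigma-d,-\sigma)$, which is nonempty precisely when $\sigma<d/2$; choosing any such $s$, Schur's test yields $\|T\|_{L^2\to L^2}\leq\sqrt{A(s)B(s)}<\infty$, hence \eqref{hardgen}.

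The guiding principle forcing this particular approach is the scale invariance of \eqref{hardgen}: both sides are homogeneous of the same degree under $f(x)\mapsto f(\lambda x)$, so the weight used in Schur's test must be a pure power, and the restriction $\sigma<d/2$ is exactly the condition that the admissible window of exponents be open. I expect no serious obstacle beyond bookkeeping the Riesz-potential constant $c_{d,\sigma}$; the only points requiring real care are the verifications of the two convergence conditions defining the windows for $s$, which are routine computations of the three integrability conditions (near $0$, near the singularity of the kernel, and at infinity).
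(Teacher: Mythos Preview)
Your argument is correct: writing $f=c_{d,\sigma}I_\sigma g$ with $g=(-\Delta)^{\sigma/2}f\in L^2$ reduces \eqref{hardgen} to the $L^2$-boundedness of the operator with kernel $|x|^{-\sigma}|x-y|^{\sigma-d}$, and Schur's test with the power weight $|x|^s$ closes precisely on the window $s\in(\sigma-d,-\sigma)$, nonempty exactly when $\sigma<d/2$. Your computations of the two convergence ranges are accurate.

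There is, however, nothing to compare against: the paper does not prove this proposition but simply refers the reader to Theorem~2.57 in \cite{bah}. Your proposal thus supplies a self-contained proof where the paper quotes the literature. The Schur-test route you take is in fact one of the classical arguments (essentially the Stein--Weiss weighted inequality specialised to $L^2$), so it is entirely in line with what one would find in a textbook treatment. One small technical point worth making explicit in a written-up version: the representation $f=c_{d,\sigma}I_\sigma g$ and the identity $\|g\|_{L^2}=\|f\|_{\dot H^\sigma}$ should first be justified on a dense subclass (e.g.\ Schwartz functions whose Fourier transform vanishes near the origin) and then extended by continuity; this is routine but deserves a sentence.
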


\begin{proof}
See Theorem 2.57 in \cite{bah}.
\end{proof}

Then, we need the following Rellich inequality (\cite{Rellich1, Rellich2}).

\begin{proposition}\label{lemcrucin}
Let $u\in C^\infty_c(\R^3\backslash\{0\})$. Then 
\begin{equation}\label{crucialest}
\int_{\R^3}\frac{|u|^2}{|x|^4}dx\leq C\int_{\R^3}|\Delta u|^2dx
\end{equation}
and, as a consequence, 
\begin{equation}\label{H2est}
\left\|\frac{u}{|x|}\right\|_{H^1}\leq C \|u\|_{H^2}.
\end{equation}

\end{proposition}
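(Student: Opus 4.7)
My approach to \eqref{crucialest} rests on the identity $\Delta(|x|^{-2})=2|x|^{-4}$ in $\R^3$, which is a direct radial calculation (or the special case $d=3$, $\alpha=-2$ of $\Delta|x|^\alpha=\alpha(\alpha+d-2)|x|^{\alpha-2}$). Since $u$ vanishes in a neighbourhood of the origin and has compact support, all weights are smooth where needed and two integrations by parts give
\begin{equation*}
\int_{\R^3}\frac{|u|^2}{|x|^4}\,dx \;=\; \tfrac12\int_{\R^3}|u|^2\,\Delta(|x|^{-2})\,dx \;=\; \tfrac12\int_{\R^3}\Delta(|u|^2)\,|x|^{-2}\,dx.
\end{equation*}
Expanding $\Delta(|u|^2)=2|\nabla u|^2+2\,\re(\bar u\,\Delta u)$ (which holds componentwise for spinor-valued $u$) reduces the right-hand side to $\int |\nabla u|^2/|x|^2\,dx + \int \re(\bar u\,\Delta u)/|x|^2\,dx$.

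I would control the first of these two integrals by the $\sigma=1$ case of the generalized Hardy inequality (Proposition \ref{genhardy}) applied componentwise to $\nabla u$, combined with the Plancherel identity $\|\nabla u\|_{\dot H^1}^2=\|\Delta u\|_{L^2}^2$, yielding $\int|\nabla u|^2/|x|^2\le C\|\Delta u\|_{L^2}^2$. For the second term, Cauchy--Schwarz bounds it by $\bigl(\int|u|^2/|x|^4\bigr)^{1/2}\|\Delta u\|_{L^2}$. Writing $A:=\int|u|^2/|x|^4\,dx$ and $B:=\|\Delta u\|_{L^2}^2$, these two estimates produce an inequality of the shape $A\le C B + A^{1/2}B^{1/2}$, and Young's inequality $A^{1/2}B^{1/2}\le \tfrac12 A+\tfrac12 B$ allows me to absorb the $\tfrac12 A$ into the left-hand side, which gives \eqref{crucialest}.

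The consequence \eqref{H2est} then follows from a pointwise expansion: $\nabla(u/|x|)=\nabla u/|x|-u\,x/|x|^3$, so that $|\nabla(u/|x|)|^2\le 2|\nabla u|^2/|x|^2+2|u|^2/|x|^4$. Integrating, the first summand is handled by Hardy applied to $\nabla u$ and the second by \eqref{crucialest}, giving $L^2$ control of $\nabla(u/|x|)$ by $\|u\|_{H^2}$. The $L^2$ norm of $u/|x|$ is controlled directly by Hardy with $\sigma=1$. Summing these contributions delivers $\|u/|x|\|_{H^1}\le C\|u\|_{H^2}$.

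The only delicate point I foresee is the validity of the two integrations by parts (and the absence of boundary contributions coming from the singularity of $|x|^{-2}$ and $|x|^{-4}$ at the origin); the hypothesis $u\in C^\infty_c(\R^3\setminus\{0\})$ is placed precisely to make them unambiguous, and it also sidesteps the well-known failure of Rellich's inequality on all of $H^2(\R^3)$ that is caused by the $|x|^{-1}$ ground-state behaviour at the origin. Apart from this, the argument is a clean absorption trick and no substantial obstacle is expected.
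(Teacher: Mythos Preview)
Your argument is correct and takes a genuinely different route from the paper. For \eqref{crucialest} the paper performs a spherical harmonics decomposition $u=\sum_k u_k(r)\phi_k(\theta)$, expands $\int|\Delta u|^2$ accordingly, and then bounds each angular sector by a combination of the one-dimensional radial Hardy inequality and an integration by parts. You instead exploit the three-dimensional identity $\Delta(|x|^{-2})=2|x|^{-4}$ to transfer two derivatives onto $|u|^2$, and close with Hardy on $\nabla u$ plus a Cauchy--Schwarz/absorption step. Your proof is shorter and avoids any angular decomposition; the paper's approach, on the other hand, displays the angular-momentum structure explicitly and is the standard path toward sharp constants (which neither proof actually tracks here). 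For \eqref{H2est} the two arguments are essentially identical.
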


\begin{remark}
As a matter of fact the inequality was proved (in the radial case) by Rellich himself exactly in this form and with the optimal constant $C=(\frac{3}{4})^2$; the extension to the non radial case, here inspired by \cite{cazacu}, appears probably in several other papers but we were not able to find a reference and so we include a proof for the sake of completeness. We stress the fact that the inequality is usually stated and proved on functions defined on the wholes space $\R^n$ but in higher dimensions $n$.
\end{remark}

\begin{proof}
We are not interested in providing optimal constant in the inequality, and so along the proof any constant appearing will be all indicated by a generic $C$ that will be allowed to change from line to line. We use the well known spherical harmonics decomposition to write
$$
u(x)=\sum_{k=0}^\infty u_k(r)\phi_k(\theta)
$$
where $\phi_k(\theta)\in L^2(S^2)$ for $k\geq0$ are the spherical harmonics of degree $k$ (which might be assumed to be $L^2$-normalized) which, we recall, satisfy the property $\Delta_{S^2}\phi_k=c_k\phi_k$ with $c_k=k(k+1)$. We then get, as the action of the laplacian in spherical coordinate is given by $\Delta=\partial_{rr}++\frac2r\partial_r+\frac1{r^2}\Delta_{S^2}$, that we can write
$$
\int_{\R^3}|\Delta u|^2dx=\sum_{k=0}^\infty\int_0^{+\infty}\left(|\partial_{rr}u_k|^2+\frac{c_k^2}{r^4}u_k^2-\frac{2c_k}{r^2}u_k\partial_{rr}u_k\right)r^2dr
$$
that, after integrating by parts, becomes
\begin{equation}\label{radla}
\int_{\R^3}|\Delta u|^2dx=
\end{equation}
\begin{equation*}\sum_{k=0}^\infty\left(\int_0^{+\infty}|\partial_{rr}u_k|^2r^2dr+2(c_k+1)\int_0^{+\infty}|\partial_ru_k|^2dr+c_k(c_k-1)\int_0^{+\infty}\frac{u^2_k}{r^2}dr\right).
\end{equation*}
Notice that $c_k(c_k-1)\geq0$ for every $k\geq0$. 
We now use the following estimates:
\begin{equation*}
\int_0^{+\infty}|\partial_{rr}u_k|^2r^2dr\geq C\int_0^{+\infty}\frac{|\partial_{r}u_k|^2}{r^2}r^2dr,\qquad \forall k\geq0
\end{equation*}
which is a direct application of Hardy inequality for radial functions, and
\begin{equation*}
\int_0^{+\infty}|\partial_{r}u_k|^2dr\geq C\int_0^{+\infty}\frac{|u_k|}{r^4}^2r^2dr,\qquad \forall k\geq0,
\end{equation*}
which is a consequence of integration by parts and Cauchy-Schwarz. Plugging these estimates into \eqref{radla} we thus obtain
\begin{equation*}
\int_{\R^3}|\Delta u|^2dx\geq C\sum_{k=0}^\infty \int_0^{+\infty}\frac{|u_k|^2}{r^4}r^2dr
\end{equation*}
that is exactly \eqref{crucialest}.

Estimate \eqref{H2est} comes as a direct consequence of \eqref{crucialest} and standard Hardy's inequality: indeed, for any $u\in C^\infty_c(\R^3\backslash\{0\})$ we have
\begin{equation}\label{H2est}
\left\|\frac{u}{|x|}\right\|_{H^1}\lesssim 
\left\|\frac{\nabla u}{|x|}\right\|_{L^2}+
\left\|\frac{u}{|x|^2}\right\|_{L^2}\lesssim \|u\|_{H^2}.
\end{equation}
\end{proof}

The following lemma represents the key step in order to extend the propagator to higher order fractional Sobolev spaces.

\begin{lemma}\label{hardyrell}
For any $t>0$, the Hamiltonian $H_1(t)$ is a bounded operator from $H^{\sigma}$ into $H^{\sigma-1}$
 for any $\sigma\in[1,3/2)$. 
 \end{lemma}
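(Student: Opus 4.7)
The plan is to split
$$
H_1(t) = \D + \beta - \frac{Z}{|x|} + i\dot q(t)\cdot\nabla
$$
and verify that each of the four summands maps $H^\sigma$ boundedly into $H^{\sigma-1}$ for $\sigma\in[1,3/2)$. Three of the four terms are immediate: $\D = i^{-1}\sum_{j=1}^3\alpha_j\partial_j$ is a constant-coefficient first-order differential operator, so $\|\D f\|_{H^{\sigma-1}}\leq C\|f\|_{H^\sigma}$, and the same holds for $i\dot q(t)\cdot\nabla$ with constant controlled by $|\dot q(t)|$; the mass term $\beta$ is a constant matrix, hence bounded on $H^\sigma$ and a fortiori into $H^{\sigma-1}$. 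The entire content of the lemma therefore reduces to the Coulomb-multiplication bound $\|f/|x|\|_{H^{\sigma-1}}\leq C\|f\|_{H^\sigma}$.

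For this I would interpolate between two endpoint inequalities already proved. The standard Hardy inequality (Proposition \ref{genhardy} in dimension three with index one) gives
$$
\|f/|x|\|_{L^2}\leq C\|f\|_{\dot H^1},
$$
that is, the multiplication operator $T\colon f\mapsto f/|x|$ is bounded from $H^1$ into $L^2$; the Rellich inequality of Proposition \ref{lemcrucin} gives
$$
\|f/|x|\|_{H^1}\leq C\|f\|_{H^2}
$$
on the dense subspace $C_c^\infty(\R^3\setminus\{0\})$ (outside of which $f/|x|$ need not belong to $H^1$ when $f$ does not vanish at the origin). Both endpoint estimates then hold on the common core $C_c^\infty(\R^3\setminus\{0\})$, and complex interpolation of the Sobolev scale, using $[L^2,H^1]_\theta=H^\theta$ and $[H^1,H^2]_\theta=H^{1+\theta}$, yields
$$
\|Tf\|_{H^\theta}\leq C\|f\|_{H^{1+\theta}},\qquad \theta\in[0,1],
$$
on this subspace, which for $\theta=\sigma-1\in[0,1/2)$ is exactly the required inequality.

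The subtle point, and the step I expect to be the main obstacle, is the extension from $C_c^\infty(\R^3\setminus\{0\})$ to all of $H^\sigma$. This is precisely where the threshold $\sigma<\tfrac{3}{2}=\tfrac{d}{2}$ enters naturally: for $\sigma<d/2$ a single point has vanishing $H^\sigma$-capacity, so $C_c^\infty(\R^3\setminus\{0\})$ is dense in $H^\sigma(\R^3,\C^4)$. Concretely, if $\chi\in C_c^\infty(\R^3)$ equals one near the origin, the rescalings $\chi_n(x)=\chi(nx)$ satisfy $\|\chi_n\|_{\dot H^\sigma}^2\sim n^{2\sigma-3}\to 0$ as $n\to\infty$, so $(1-\chi_n)f$ approximates any $f\in H^\sigma$ by functions supported away from the origin. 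The interpolated Hardy--Rellich inequality then extends uniquely and continuously to the whole of $H^\sigma$, and summing the four contributions gives
$$
\|H_1(t)f\|_{H^{\sigma-1}}\leq C\bigl(1+|Z|+|\dot q(t)|\bigr)\|f\|_{H^\sigma},
$$
which is the statement of the lemma.
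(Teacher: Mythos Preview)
Your reduction to the Coulomb multiplier is correct, and so is your observation that $C_c^\infty(\R^3\setminus\{0\})$ is dense in $H^\sigma$ for $\sigma<3/2$ (the paper's own Remark after the proof states the threshold as $\sigma\geq 1/2$, which appears to be a slip; the correct threshold is $3/2$). The gap in your argument lies one step earlier, in the interpolation itself. Complex interpolation of operators requires $T$ to be bounded on the \emph{full} endpoint spaces, not merely on a common core: you need a holomorphic family $F$ with $F(\theta)=f$ and $F(1+it)\in H^2$, and then the bound $\|TF(1+it)\|_{H^1}\leq C_1\|F(1+it)\|_{H^2}$. But the Rellich estimate $\|u/|x|\|_{H^1}\leq C\|u\|_{H^2}$ fails as soon as $u(0)\neq 0$, and the admissible $F$ realizing $\|f\|_{H^{1+\theta}}$ has no reason to satisfy $F(1+it)(0)=0$. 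Since $C_c^\infty(\R^3\setminus\{0\})$ is \emph{not} dense in $H^2$ (here $2>3/2$), the operator $T$ does not extend boundedly to $H^2\to H^1$, and the Riesz--Thorin/Stein machinery does not directly produce the bound on the core. Equivalently, interpolating norms gives only $\|Tf\|_{H^\theta}\leq C\|f\|_{H^1}^{1-\theta}\|f\|_{H^2}^\theta$, which is strictly weaker than $\|Tf\|_{H^\theta}\leq C\|f\|_{H^{1+\theta}}$. Your density step would then close the argument, but only once the interpolated inequality on the core is in hand.

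The paper circumvents exactly this obstruction by regularizing: it replaces $1/|x|$ by $V_\varepsilon=(|x|^2+\varepsilon^2)^{-1/2}$ and proves, with an $\varepsilon$-independent constant, that $\|V_\varepsilon u\|_{H^1}\leq C\|u\|_{H^2}$ for \emph{all} $u\in C_c^\infty(\R^3)$ (smoothness of $V_\varepsilon$ handles the region near the origin, Rellich handles the rest). Now both endpoint bounds hold on a genuine common dense subspace of $H^1$ and $H^2$, so standard interpolation gives $\|V_\varepsilon u\|_{H^{\sigma-1}}\leq C\|u\|_{H^\sigma}$ uniformly in $\varepsilon$, and one passes to the limit $\varepsilon\to 0$ in $H^{\sigma-1}$ using a quantitative $L^2$ convergence $\|(V_\varepsilon-|x|^{-1})u\|_{L^2}\lesssim\varepsilon^{\sigma-1}\|u\|_{H^\sigma}$ and a Fourier-side truncation. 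Your route could in principle be completed by invoking a subspace-interpolation theorem showing $[H^1,\{f\in H^2:f(0)=0\}]_\theta=H^{1+\theta}$ for $\theta<1/2$, but that is a nontrivial statement in its own right and is not what you wrote.
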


\begin{proof}
We need to show that the operator $\frac1{|x|}$ is bounded from $H^{\sigma}$ into $H^{\sigma-1}$ for $\sigma\in[1,3/2)$, that is inequality
\begin{equation}\label{cry}
\left\| \frac{u}{|x|}\right\|_{H^{\sigma-1}}\leq \|u\|_{H^{\sigma}},\qquad \sigma\in[1,\frac32),
\end{equation}
as the boundedness of the other terms is trivial. In order to prove this fact, we introduce the regularizing potential $\frac1{\sqrt{|x|^2+\varepsilon^2}}$, with $\varepsilon>0$. First, we have the following inequality, for any $u\in C^\infty_c(\R^3)$
\begin{equation}\label{in1}
\left\|\frac{u}{\sqrt{|x|^2+\varepsilon^2}}\right\|_{L^2}\lesssim \|u\|_{H^1},
\end{equation}
that is a simple consequence of Hardy's inequality as $\frac1{\sqrt{|x|^2+\varepsilon^2}}\leq\frac1{|x|}$. Then, we also have, still for any $u\in C^\infty_c(\R^3)$
\begin{equation}\label{in2}
\left\|\frac{u}{\sqrt{|x|^2+\varepsilon^2}}\right\|_{H^1}\lesssim \|u\|_{H^2}.
\end{equation}
In order to prove this inequality, we write
\begin{equation*}
\left\|\frac{u}{\sqrt{|x|^2+\varepsilon^2}}\right\|_{H^1}\lesssim
\left\|\frac{\nabla u}{\sqrt{|x|^2+\varepsilon^2}}\right\|_{L^2}+
\left\|u\nabla\left(\frac{1}{\sqrt{|x|^2+\varepsilon^2}}\right)\right\|_{L^2}=I+II.
\end{equation*}
For $I$ we easily have from Hardy's inequality
$$
I\leq \left\|\frac{\nabla u}{|x|}\right\|_{L^2}\leq \|u\|_{H^2}.
$$
We now deal with $II$: first of all, notice that we have
$$
\left|\nabla\left(\frac{1}{\sqrt{|x|^2+\varepsilon^2}}\right)\right|\leq \frac1{|x|^2+\varepsilon^2}.
$$
Now, let us introduce two functions $\chi$ and $\eta$ such that $f=f\eta+f\chi$, with supp$(\eta)\subset B(0,\tilde{\varepsilon})$, supp$(\chi)\subset B^c(0,\tilde{\varepsilon}/2)$ with $\tilde{\varepsilon}$ to be fixed later, and $\eta+\chi=1$. We thus write
$$
II^2\leq\int\frac{|u|^2}{(|x|^2+\varepsilon^2)^2}\leq 
\int\frac{|\chi u|^2}{(|x|^2+\varepsilon^2)^2}+
\int\frac{|\eta u|^2}{(|x|^2+\varepsilon^2)^2}=II_1^2+II_2^2.
$$
The term $II_1^2$ gives no problem, as the support of $\chi$ allows to neglect the singularity in zero, and one simply has from \eqref{crucialest}
$$
II^2_1\leq \int\frac{|u|^2}{|x|^4}\leq \|u\|_{H^2}^2.
$$
On the other hand, for $II^2_2$ we can write
$$
II^2_2\leq \int_{B(0,\tilde{\varepsilon})}\frac{|u|^2}{\varepsilon^4}\lesssim \|u\|_{L^\infty}^2\frac{\tilde{\varepsilon}^3}{\varepsilon^4}\leq \|u\|_{H^2}
$$
where in the last inequality we have chosen $\tilde{\varepsilon}=\varepsilon^{4/3}$. This concludes the proof of \eqref{in2}. We can now interpolate between inequalities \eqref{in1} and \eqref{in2} to obtain the folllowing family of inequalities for any $u\in C^\infty_c(\R^3)$: for any $\varepsilon>0$ and any $\sigma\in [1,2]$ we have
\begin{equation}\label{intin}
\left\| \frac{u}{\sqrt{|x|^2+\varepsilon^2}}\right\|_{H^{\sigma-1}}\leq \|u\|_{H^{\sigma}}.
\end{equation}
We now want to send $\varepsilon\rightarrow 0$ to retrieve our result: first, we note that  
\begin{equation}\label{L2conv}
\left\|\left(\frac{1}{\sqrt{|x|^2+\varepsilon^2}}-\frac1{|x|}\right)u\right\|_{L^2}\rightarrow 0
\end{equation}
for $\varepsilon\rightarrow 0$. More precisely, we have that
$$
\left| \frac1{\sqrt{\varepsilon^2+|x|^2}}-\frac1{|x|}\right|=\left|\frac{|x|-\sqrt{\varepsilon^2+|x|^2}}{|x|\sqrt{\varepsilon^2+|x|^2}}\right|=\left|\frac{\varepsilon^2}{|x|\sqrt{\varepsilon^2+|x|^2}(|x|+\sqrt{\varepsilon^2+|x|^2})}\right|
\leq \frac{\varepsilon^2}{|x|^3}
$$ and 
$$\left| \frac1{\varepsilon+|x|}-\frac1{|x|}\right|\leq \frac{1}{|x|}$$ imply
$$
\left| \frac1{\varepsilon+|x|}-\frac1{|x|}\right|\leq \frac{\varepsilon^{s}}{|x|^{3s/2}}\left(\frac1{|x|}\right)^{1-s /2}=\frac{\varepsilon^{s}}{|x|^{1+s}}
$$
for $s\in(0,1/2)$; therefore, using \eqref{hardgen}, we get
\begin{equation}\label{thisweneed}
\left\|\left(\frac{1}{\sqrt{|x|^2+\varepsilon^2}}-\frac1{|x|}\right)\right\|_{L^2}^2\leq
\varepsilon^{2(\sigma-1)}\int\frac{|u|^2}{|x|^{\sigma}}\leq \varepsilon^{2(\sigma-1)}\|u\|_{H^{\sigma}}
\end{equation}
which in particular proves \eqref{L2conv}. 

We are now ready to prove \eqref{cry}. We use Fourier transform $\mathcal{F}$ and then bound as follows: fix $R>0$, then
\begin{equation*}
\int_0^R\langle \xi\rangle^{\sigma-1}\left|\mathcal{F}\left(\frac{u}{|x|}\right)\right|^2(\xi)d\xi
\end{equation*}
\begin{equation*}
\leq
\int_0^R \langle \xi\rangle^{\sigma-1}\left|\mathcal{F}\left(\frac{u}{|x|}\right)-\mathcal{F}\left(\frac{u}{\sqrt{\varepsilon^2+|x|^2}}\right)\right|^2(\xi)d\xi+\int_0^R\langle \xi\rangle^{\sigma-1}\left|\mathcal{F}\left(\frac{u}{\sqrt{\varepsilon^2+|x|^2}}\right)\right|^2(\xi)d\xi
\end{equation*}
\begin{equation*}
\leq
\langle R\rangle^{\sigma-1}\left\|\frac{u}{|x|}-\frac{u}{\sqrt{\varepsilon^2+|x|^2}}\right\|_{L^2}^2+
\left\|\frac{u}{\sqrt{\varepsilon^2+|x|^2}}\right\|_{H^\sigma}^2
\end{equation*}
\begin{equation*}
\leq
\|u\|_{H^{\sigma}}^2\left(1+\langle R\rangle^{\sigma-1}\varepsilon^{\sigma-1}\right)^2
\end{equation*}
where in the last inequality we have used \eqref{thisweneed} and \eqref{intin}. We now take $\varepsilon$ such that $\langle R\rangle^{\sigma-1}\varepsilon^{\sigma-1}\rightarrow 0$ to conclude
$$
\int_0^R\langle \xi\rangle^{\sigma-1}\left|\mathcal{F}\left(\frac{u}{|x|}\right)\right|^2(\xi)d\xi\leq \|u\|_{H^{\sigma}}^2
$$
that is \eqref{cry}.
\end{proof}

\begin{remark}
Notice that the tempting argument of interpolating inequality \eqref{H2est} with standard Hardy's inequality to conclude the desired boundedness of the multiplication operator $\frac1{|x|}$ from $H^{\sigma}$ into $H^{\sigma-1}$ does not come for free, as \eqref{H2est} is only proved for functions $u\in C^\infty_c(\R^3\backslash\{0\})$ which is not dense in $H^\sigma$ for $\sigma\geq \frac{1}{2}$. This fact forces our two steps interpolation procedure.
\end{remark}

We are now in position to state the the main result of this first part, that is the existence of a two-parameter propagator associated to the time-dependent Hamiltonian $H_1(t)$.

\begin{proposition}\label{proppropag}
Suppose the hypothesis of Lemma \ref{lem-pert} hold true and moreover that 
\begin{equation}\label{smallaccone}
\|\ddot{q}(t)\|_{L^1([0,T])}<\infty.
\end{equation}
Then there exist \\
1) a family of operators $(U_1(t,s))_{t,\sigma\in [0,T]^2}$ from $L^2$ to $L^2$ strongly continuous in $(t,s)\in [0,T]$, uniformly bounded in $(t,s)\in [0,T]\times[0,T]$ and with the properties
\begin{align*}
U_1(t,r)&=U_1(t,s) \circ U_1(s,r) U_1(t,r),\ \ \quad \ 0\leq r\leq \sigma\leq t\leq T \ ; \\
U_1(t,t)&=\mathbb{I},  \ \ \ \ \ \ \ \ \ \ \ \ \ \ \ \ \ \ \ \ \ \ \ \ \ \ \ \ \ \ \ \ \ \ 0\leq t\leq T;\\
i\partial_t U_1(t,s) &= H_1(t)U_2(t,s) \quad  \quad \quad \ \quad \quad \quad \ \ \ 0\leq \sigma\leq t\leq T;\\
i\partial_s U_1(t,s) &= - U_1(t,s)H_1(s)\quad \quad \quad \quad \quad\quad \ \ 0\leq \sigma\leq t\leq T;
\end{align*}
2) the same family of operators, again indicated with $(U_1(t,s))_{t,\sigma\in [0,T]^2}$ restrict invariantly from $H^\sigma$ to $H^\sigma$ for ant $\sigma\in[0,\frac32)$ with the same properties as above.
\end{proposition}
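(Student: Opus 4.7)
The overarching plan is to invoke Kato's theory of non-autonomous linear evolution equations (in the hyperbolic, skew-adjoint setting) twice: first with the pair $(X,Y) = (L^2, H^1)$ to produce the $L^2$-propagator of part (1), then with $(X,Y) = (H^{\sigma-1}, H^\sigma)$ for $\sigma \in (1,3/2)$ to upgrade the construction to part (2). The intermediate range $\sigma \in [0,1]$ will be handled by complex interpolation between the $L^2$ and $H^1$ bounds, giving a uniformly bounded family on $H^\sigma$ automatically.

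For part (1), the hypotheses of Kato's construction that must be verified are the following. Each $H_1(t)$ is self-adjoint on $L^2$ with domain $H^1$: the static Dirac-Coulomb part is self-adjoint by Schmincke's theorem under the assumption \eqref{asszone} (Remark \ref{selfadjrk}), and under the smallness assumption \eqref{smallvelone} the drift $i\dot q(t)\cdot \nabla$ can be absorbed as a Kato-Rellich perturbation without disturbing self-adjointness. The common domain is $t$-independent and by Lemma \ref{lem-pert} the associated graph norms are uniformly equivalent to $\|\cdot\|_{H^1}$, so $H_1(t) \in \mathcal L(H^1,L^2)$ uniformly in $t$. Norm continuity of $t \mapsto H_1(t) \in \mathcal L(H^1,L^2)$ follows since only the drift depends on $t$, and assumption \eqref{smallaccone} ($\ddot q \in L^1([0,T])$) gives $\dot q \in \mathcal C([0,T])$. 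The stability constant equals $1$ since each $H_1(t)$ generates a unitary group on $L^2$; and the intertwining condition via an isomorphism $S : H^1 \to L^2$ (for instance $S = \langle D\rangle$ or $S = H_1(0)$) is verified since the commutator $[S, H_1(t)]S^{-1}$ is bounded on $L^2$ uniformly in $t$. Kato's theorem then supplies the family $U_1(t,s)$ of unitary operators on $L^2$ with all the listed properties, and with $H^1$-invariance built in.

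For part (2), the new ingredient when $\sigma \in (1,3/2)$ is that $H^\sigma$ is \emph{not} the domain of $H_1(t)$, only a space on which $H_1(t) : H^\sigma \to H^{\sigma-1}$ is bounded (Lemma \ref{hardyrell}). I would apply Kato's theorem a second time on the pair $(H^{\sigma-1}, H^\sigma)$: boundedness and continuity of $H_1(t)$ in $\mathcal L(H^\sigma, H^{\sigma-1})$ are given directly by Lemma \ref{hardyrell} and the continuity of $\dot q$. The delicate step, which is the main obstacle, is the intertwining/stability condition: taking $\Lambda = \langle D\rangle^{\sigma-1} : H^{\sigma-1} \to L^2$ as the isomorphism, one has to establish a uniform bound
\begin{equation*}
\|[\Lambda, H_1(t)]\Lambda^{-1}\|_{\mathcal L(L^2)} \leq C,
\end{equation*}
which reduces to commutator estimates between fractional derivatives and the Coulomb singularity $1/|x|$. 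These commutator estimates are exactly what the generalized Hardy and Rellich inequalities (Propositions \ref{genhardy} and \ref{lemcrucin}) give in the range $\sigma < 3/2$; beyond this threshold the commutator ceases to be bounded on $L^2$, which is ultimately why $3/2$ is the natural ceiling (cf.\ Remark \ref{structs}). Once the hypotheses are in place, Kato's theorem produces a propagator on $H^{\sigma-1}$; by uniqueness on the dense subspace $L^2 \subset H^{\sigma-1}$ it must agree with $U_1(t,s)$ and restrict to a uniformly bounded family on $H^\sigma$. Uniformity of the bound in $(t,s)$ and in the trajectory $q$ follows from tracking the Gr\"onwall constants in the Kato construction, which depend only on $R_Z$, the Hardy-Rellich constants, and $\|\ddot q\|_{L^1([0,T])}$.
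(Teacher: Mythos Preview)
Your overall plan coincides with the paper's: both invoke Kato's theory for non-autonomous evolution equations. The paper's proof is in fact only three lines, citing Lemma~\ref{hardyrell} (it writes Lemma~\ref{lem-pert}, but the content used is that of~\ref{hardyrell}) for the fact that $H_1(t)\in C^1\big([0,T],\mathcal L(H^\sigma,H^{\sigma-1})\big)$ and then referring to \cite{Kato53,Kato1970} without further verification. Your treatment of Part~(1) and the interpolation to $\sigma\in[0,1]$ is correct and more explicit than what the paper writes.

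There is, however, a genuine gap in your Part~(2). The intertwining condition you invoke is that $[\langle D\rangle^{\sigma-1},1/|x|]\,\langle D\rangle^{-(\sigma-1)}$ be bounded on $L^2$, i.e.\ that the commutator map $H^{\sigma-1}\to L^2$ boundedly. You assert this ``is exactly what'' Propositions~\ref{genhardy}--\ref{lemcrucin} give, but that is not so. Those results yield only boundedness of $1/|x|:H^\sigma\to H^{\sigma-1}$ (Lemma~\ref{hardyrell}) and $1/|x|:H^1\to L^2$ (Hardy); from these one sees that both $\langle D\rangle^{\sigma-1}\circ(1/|x|)$ and $(1/|x|)\circ\langle D\rangle^{\sigma-1}$ are bounded $H^\sigma\to L^2$, hence so is their difference --- but this is one full derivative weaker than what your argument requires. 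A commutator gain of one derivative against the scaling-critical potential $1/|x|$ is not a consequence of Hardy--Rellich: for instance the crude kernel bound $|\langle\xi\rangle^\alpha-\langle\eta\rangle^\alpha|\lesssim|\xi-\eta|^\alpha$ (with $\alpha=\sigma-1$) leads, via Proposition~\ref{genhardy}, back to $\|[\langle D\rangle^\alpha,1/|x|]u\|_{L^2}\lesssim\|u/|x|^{1+\alpha}\|_{L^2}\lesssim\|u\|_{H^{1+\alpha}}=\|u\|_{H^\sigma}$, not $\|u\|_{H^{\sigma-1}}$. The paper avoids the intertwining route altogether and appeals directly to the constant-domain version of Kato's theorem, though it does not spell out the generator/domain identification on $H^{\sigma-1}$ either.
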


\begin{remark}
The condition $\ddot{q}$ above is needed in order to ensure that $H_1(t)$ is of bounded variations (in time) as an operator from $H^1$ to $L^2$. What is more, the operator $U_1(t,s)$ is uniformly bounded from $H^1$ to $H^1$ or from $L^2$ to $L^2$ in $q$ in balls of $W^{2,1}$.
\end{remark}

\begin{proof}
This result is a direct consequence of a very well known and developed theory. Indeed, Lemma \ref{lem-pert} ensures that $H_1(t)$ is a bounded, continuously differentiable in time operator from $H^{\sigma}$ to $H^{\sigma-1}$ for any $\sigma\in[1,\frac32)$. This allows to use the well known results due to Kato (see \cite{Kato1970}, \cite{Kato56} and, in particular, Theorem 2 in \cite{Kato53}; see also \cite{SchGrie14} and \cite{Schn} for recent surveys) to show that $H_1(t)$ generates a two parameter propagator and a well posed dynamics.
\end{proof}
\begin{remark}The fact that the propagator is in $L^2\rightarrow L^2$ is due to the fact that $H_1(t)$ satisfies \eqref{isoiso} and that it is of bounded variations in time. Besides already quoted literature, a good reference where restriction of the evolution family to dense subset is treated is the classic treatise of Pazy, chapter V \cite{Pazy}. For more recent exposition, survey and clarification of some of the hypotheses in original papers, we mention \cite{SchGrie14} and \cite{Schn} where by the way it is remarked that the $C^1$ property of the generator is the really relevant one. Also, we mention \cite{movdir}, in which the authors construct a propagator for a Dirac equation with a moving small potential. The meaning of the two parts of Proposition \ref{proppropag} is that equation \eqref{modifeq} is well posed in both $L^2$ and $H^\sigma$, and hence the same holds true for the original equation \eqref{origeq}, thanks to Lemma \ref{lem-changevar}.
\end{remark}



In order to define the propagator associated to the original equation \eqref{origeq}, one only needs to re-change variables.

\begin{proposition}\label{prop-linH1} 
Let $q$ and $Z$ satisfy \eqref{asszone}, \eqref{smallvelone} and \eqref{smallaccone}. 
 Then the flow of the equation
$$
i\partial_t u = H(t) u
$$
with $H(t)$ defined by \eqref{1dham} is given by a family of operators $U(t,s)=U_q(t,s)$ satisfying 
$$
U_q(t,s) \circ U_q(s,r) = U_q(t,r), \ \ \ U_q(t,t)=\mathbb{I}.
$$ 
and
$$
i\partial_t U_q(t,s) = H(t)U_q(t,s) \; , \; i\partial_s U_q(t,s) = - U_q(t,s)H(s)
$$
with 
$$U_q \in \mathcal C([0,T]^2,\mathcal L(H^\sigma))$$ for any $\sigma\in[0,\frac32)$. In particular the norms 
$$
\|U_q(t,s)\|_{H^\sigma\rightarrow H^\sigma}
$$
are uniformly bounded in $t$, $s$, and $q$.
\end{proposition}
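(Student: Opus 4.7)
The plan is to undo the change of variables from Lemma \ref{lem-changevar} and carry the propagator $U_1$ constructed in Proposition \ref{proppropag} back to the original coordinates. Denote by $T_a$ the translation $(T_a f)(x)=f(x-a)$; since $T_a$ is an isometry on every $H^\sigma$, composing with translations will neither spoil norms nor Sobolev regularity. I would then define
$$
U_q(t,s) := T_{q(t)}\, U_1(t,s)\, T_{-q(s)}.
$$
Lemma \ref{lem-changevar} (read in reverse) ensures that if $u_0\in H^\sigma$ and $v(t):= U_1(t,s)(T_{-q(s)}u_0)$ solves \eqref{modifeq}, then $u(t):=T_{q(t)}v(t)$ solves \eqref{origeq} with $u(s)=u_0$, so this is indeed the desired propagator.

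The algebraic identities are then immediate: $U_q(t,t)=\mathbb{I}$ is trivial, and since $T_{-q(s)}T_{q(s)}=\mathbb{I}$ cancels in the middle, the cocycle identity $U_q(t,s)U_q(s,r)=U_q(t,r)$ descends from the same property for $U_1$. For the evolution equations I would use $[\nabla,T_a]=0$, $[\mathcal{D}+\beta,T_a]=0$, and the elementary conjugation rule $T_a V(\cdot)T_{-a}=V(\cdot-a)$ for a multiplication operator, which together yield
$$
T_{q(t)}H_1(t) T_{-q(t)} = H(t) + i\dot q(t)\cdot\nabla,
$$
and combine this with $\partial_t T_{q(t)} = -(\dot q(t)\cdot\nabla) T_{q(t)}$ to compute
\begin{align*}
i\partial_t U_q(t,s) &= -i(\dot q(t)\cdot\nabla)\,U_q(t,s) + T_{q(t)}H_1(t)U_1(t,s)T_{-q(s)}\\
&= \bigl[-i(\dot q(t)\cdot\nabla) + H(t)+i\dot q(t)\cdot\nabla\bigr] U_q(t,s) = H(t)\,U_q(t,s).
\end{align*}
The convective term cancels exactly; the analogous computation in $s$ yields $i\partial_s U_q(t,s) = -U_q(t,s)H(s)$.

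Finally, for the uniform bounds and the continuity statement, translations being $H^\sigma$-isometries give
$$
\|U_q(t,s)\|_{H^\sigma\to H^\sigma} = \|U_1(t,s)\|_{H^\sigma\to H^\sigma},
$$
which is uniformly bounded in $(t,s,q)$ by Proposition \ref{proppropag}, whose estimates depend only on the $W^{2,1}$ norm of $q$ controlled by \eqref{smallvelone} and \eqref{smallaccone}. Strong continuity of $(t,s)\mapsto U_q(t,s)$ on $H^\sigma$ follows from the strong continuity of $U_1$, the strong continuity of the translation group on $H^\sigma$, and the continuity of $t\mapsto q(t)$ furnished by $q\in W^{2,1}([0,T])$. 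I do not anticipate a serious obstacle here: the heavy lifting has been done in Proposition \ref{proppropag}, and the only care required is to read $i\partial_t U_q = H(t) U_q$ as an identity in $\mathcal{L}(H^\sigma, H^{\sigma-1})$ rather than in $\mathcal{L}(H^\sigma)$, since by Lemma \ref{hardyrell} the Hamiltonian loses one derivative; this is exactly the same interpretation already adopted in Proposition \ref{proppropag}.
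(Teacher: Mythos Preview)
Your proof is correct and follows exactly the paper's approach: the paper defines $I(t)f(x)=f(x+q(t))$ (your $T_{-q(t)}$) and sets $U_q(t,s)=I(t)^{-1}U_1(t,s)I(s)$, appealing to Lemma~\ref{lem-changevar} for the conclusion. You have simply spelled out the verifications (cocycle identity, cancellation of the convective term, and transfer of bounds/continuity via isometry of translations) that the paper leaves implicit.
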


\begin{proof} Let $I(t)$ be the smooth isometry of $H^\sigma$ for any $s$
\begin{equation}\label{It}
I(t) f(t,x) = f(t,x +q(t)). 
\end{equation}
The operators $U(t,s) = I(t)^{-1} U_1(t,s) I(s)$ satisfy the conclusions, as remarked in Lemma \ref{lem-changevar}.
\end{proof}

We now show the continuity in $q$ of the propagator $U$.

\begin{proposition}\label{lem-cont} The operator $U(t,s) = U_q(t,s)$ depends continuously on $q$ as an operator from $ H^{\sigma}$ to $H^{\sigma-1}$, in the sense that for all $q_1,q_2 \in \mathcal C^1([0,T])$ satisfying the assumptions of Lemma \ref{lem-pert} and all $(t,\tau) \in [0,T]^2$,

\begin{equation}\label{H2cont}
\|U_{q_1}(t,\tau)- U_{q_2}(t,\tau)\|_{H^{\sigma} \rightarrow H^{\sigma-1}} \leq C T \|\dot{q}_1 - \dot{q}_2\|_{L^\infty([0,T])} 
\end{equation}
for any $\sigma\in[1,\frac32)$, with $C$ independent from $T$ and $q$ (with our assumptions of boundedness on $q$).
\end{proposition}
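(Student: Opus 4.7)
The plan is to exploit the factorisation $U_{q_j}(t,\tau) = I_j(t)^{-1} U_1^{(j)}(t,\tau) I_j(\tau)$ from Proposition~\ref{prop-linH1}, where $I_j(s)f(x) = f(x+q_j(s))$ and $U_1^{(j)}$ is the propagator generated by $H_1^{(j)}(s) = \mathcal{D}+\beta - Z/|x| + i\dot q_j(s)\cdot \nabla$, and then to compare the two translated propagators via a Duhamel-type identity. The key structural observation is that
\begin{equation*}
H_1^{(2)}(s) - H_1^{(1)}(s) = i(\dot q_2(s) - \dot q_1(s))\cdot \nabla,
\end{equation*}
which is a first-order operator: it maps $H^\sigma \to H^{\sigma-1}$ with norm bounded by $|\dot q_1(s)-\dot q_2(s)|$, but does not preserve $H^\sigma$. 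This is precisely what forces the continuity statement to involve a loss of one derivative.

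Concretely, I would first derive the identity
\begin{equation*}
U_1^{(2)}(t,\tau) - U_1^{(1)}(t,\tau) = -i\int_\tau^t U_1^{(1)}(t,s)\bigl[H_1^{(2)}(s) - H_1^{(1)}(s)\bigr] U_1^{(2)}(s,\tau)\,ds
\end{equation*}
by differentiating $s\mapsto U_1^{(1)}(t,s)U_1^{(2)}(s,\tau)$ using the equations $i\partial_s U_1^{(j)}(t,s) = -U_1^{(j)}(t,s)H_1^{(j)}(s)$ and $i\partial_s U_1^{(j)}(s,\tau) = H_1^{(j)}(s)U_1^{(j)}(s,\tau)$ from Proposition~\ref{proppropag}, and integrating. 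Estimating along the chain $H^\sigma \xrightarrow{U_1^{(2)}(s,\tau)} H^\sigma \xrightarrow{H_1^{(2)}-H_1^{(1)}} H^{\sigma-1} \xrightarrow{U_1^{(1)}(t,s)} H^{\sigma-1}$ and using the uniform boundedness of both $U_1^{(j)}$ on $H^\sigma$ and on $H^{\sigma-1}$ guaranteed by Proposition~\ref{proppropag} (which requires $\sigma-1 \in [0,1/2) \subset [0,3/2)$), I would conclude
\begin{equation*}
\|U_1^{(1)}(t,\tau) - U_1^{(2)}(t,\tau)\|_{H^\sigma \to H^{\sigma-1}} \leq C\int_\tau^t |\dot q_1(s)-\dot q_2(s)|\,ds \leq CT\|\dot q_1-\dot q_2\|_{L^\infty}.
\end{equation*}

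To pass back to the original propagators I would write
\begin{equation*}
U_{q_1}(t,\tau) - U_{q_2}(t,\tau) = \bigl(I_1(t)^{-1}-I_2(t)^{-1}\bigr)U_1^{(1)}(t,\tau)I_1(\tau) + I_2(t)^{-1}\bigl[U_1^{(1)}-U_1^{(2)}\bigr](t,\tau)I_1(\tau) + I_2(t)^{-1}U_1^{(2)}(t,\tau)\bigl(I_1(\tau)-I_2(\tau)\bigr),
\end{equation*}
use that each $I_j(s)$ is an $H^\sigma$ isometry, and apply the elementary bound $\|f(\cdot+a) - f(\cdot+b)\|_{H^{\sigma-1}} \leq |a-b|\,\|f\|_{H^\sigma}$, combined with $|q_1(s)-q_2(s)| \leq T\|\dot q_1-\dot q_2\|_{L^\infty}$ (which relies on the implicit assumption $q_1(0)=q_2(0)$ inherited from the setting of Theorem~\ref{teo1}), so that the translation differences contribute $H^\sigma \to H^{\sigma-1}$ operator norm $\leq CT\|\dot q_1-\dot q_2\|_{L^\infty}$. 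Adding the three pieces yields \eqref{H2cont}. The main obstacle, and the conceptual content of the lemma, is the one-derivative loss just described: since no smoothing or dispersive estimate is available for $U_1^{(j)}$, and $H_1^{(1)}-H_1^{(2)}$ is genuinely first order, any comparison via Duhamel must absorb a derivative into the target space; the upper bound $\sigma < 3/2$ is then imposed by the range of Sobolev exponents on which Proposition~\ref{proppropag} supplies a uniformly bounded propagator, applied to $\sigma - 1 < 1/2$.
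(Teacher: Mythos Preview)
Your proof is correct and reaches the same conclusion, but it takes a genuinely different route from the paper. The paper works with the discrete Kato approximation
\[
U_1(n,t,s) = e^{-i(t-K/n)H_1(K/n)}\prod_{k=J}^{K-1}e^{-iH_1(k/n)/n}\,e^{-i(J/n-s)H_1((J-1)/n)},
\]
first establishing (Lemma~\ref{lemn}) that this sequence is uniformly bounded on $L^2$ and on $H^1$ via the commutator trick $H_1(k/n)^m e^{-iH_1(k/n)/n} H_1(k/n)^{-m}$, then compares $U_1(n,q_1,\cdot,\cdot)$ and $U_1(n,q_2,\cdot,\cdot)$ by a telescoping sum over the discrete time steps, and finally passes to the limit $n\to\infty$. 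You instead invoke the continuous Duhamel identity
\[
U_1^{(2)}(t,\tau)-U_1^{(1)}(t,\tau)=-i\int_\tau^t U_1^{(1)}(t,s)\bigl[H_1^{(2)}(s)-H_1^{(1)}(s)\bigr]U_1^{(2)}(s,\tau)\,ds
\]
directly, which is legitimate once Proposition~\ref{proppropag} supplies the differentiation rules for $U_1$ in $s$ and $t$. Your argument is shorter and more transparent; the paper's approach has the advantage of being self-contained within the Kato construction (not relying on the differentiability statements, which in some versions of the theory are delicate to state sharply), and incidentally yields the auxiliary Lemma~\ref{lemn} on uniform bounds for the discrete scheme. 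The treatment of the translation operators $I_{q_j}$ and the final assembly are essentially identical in both proofs, including the use of $q_1(0)=q_2(0)$ to convert $|q_1(s)-q_2(s)|$ into $T\|\dot q_1-\dot q_2\|_{L^\infty}$.
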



\begin{proof}[Proof of Proposition \ref{lem-cont}.]
In order to prove this result, we need to give a closer look to the construction of the propagator $U_1(t,s)$. According to \cite{Kato53}, it appears that $U_1(t,s)$ is the limit as an operator from $ H^1$ to $L^2$ of
\begin{equation}\label{U1seq}
U_1(n,t,s) = e^{-i(t-K/n)H_1(K/n)}\prod_{k=J}^{K-1}e^{-i\frac{H_1(k/n)}{n}}e^{-i(J/n - s) H_1((J-1)/n)}
\end{equation}
where $K = \ent{nt}$ and $J = \ent{ns}+1$ if $t> s$ and 
$$
U_1(n,t,s) = U_1(n,s,t)^*
$$
if $t< s$ and $U_1(n,t,t) = Id$. This means that in order to compute $U_1(t,s)$ one can cut the interval $[s,t]$ in sub-intervals of size $\frac{1}{n}$ and use the propagator $e^{i\tau H_1((J-1)/n)}$ on $[s,J/n[$, then $e^{i\tau H_1(J/n)}$ between $J/n$ and $(J+1)/n$ and so on, and then pass to the limit for $n$ going to $\infty$. 
Therefore, we preliminarily prove the following
\begin{lemma}\label{lemn} 
Let $q$ and $Z$ satisfy \eqref{asszone},\eqref{smallvelone} and \eqref{smallaccone}. Then the sequence $U_1(n,t,s)$ defined by \eqref{U1seq} is uniformly bounded in $n,t,s$ as an operator from $L^2$ to itself and as an operator from $ H^1$ to itself. More precisely,
\begin{eqnarray*}
\|U_1(n,t,s)\|_{L^2\rightarrow L^2} &\leq & 1 \\
\|U_1(n,t,s)\|_{H^1 \rightarrow H^1} & \leq & C_1e^{C_1 \|\ddot q\|_{L^1}} .
\end{eqnarray*}
\end{lemma}
\begin{remark}\label{rkinterpol}
By standard interpolation argument, we can deduce that $U_1(n,t,s)$ is also uniformly bounded as an operator from $H^\sigma$ to itself for any $\sigma\in[0,1]$.
\end{remark}
\begin{proof}[Proof of Lemma \ref{lemn}. ] Thanks to Lemma \ref{lem-pert}, for $m=0,1$
$$
\|e^{itH_1(t_0)}u\|_{ H^m} \leq 2 \|H_1(t_0)^m e^{iH_1(t_0)t} u\|_{L^2}
$$
and since $H_1(t_0)$ and $e^{itH_1(t_0)}$ commute
$$
\|e^{itH_1(t_0)}u\|_{ H^m} \leq 2 \| e^{iH_1(t_0)t} H_1(t_0)^mu\|_{L^2}
$$
and given that $H_1(t_0)$ is essentially self adjoint
$$
\|e^{itH_1(t_0)}u\|_{H^m} \leq 2 \|  H_1(t_0)^mu\|_{L^2}.
$$
Let us now prove that 
\begin{equation}\label{ww}
H_1(\frac{K-1}{n})^m\prod_{k=J}^{K-1}e^{-i\frac{H_1(k/n)}{n}}H_1(\frac{J}{n})^{-m}
\end{equation}
is bounded in $\mathcal L(L^2)$ uniformly in $n$, which would imply the result. We can rewrite \eqref{ww} as
\begin{multline*}
\Big( \prod_{k=J+1}^{K-1}H_1(k/n)^me^{-i\frac{H_1(k/n)}{n}}H_1(\frac{k}{n})^{-m}H_1(\frac{k}{n})^mH_1(\frac{k-1}{n})^{-m}\Big)\\
H_1(\frac{J}{n})^me^{-i\frac{H_1(J/n)}{n}}H_1(\frac{J}{n})^{-m}.
\end{multline*}
Since $H_1(k/n)$ and $e^{-i\frac{H_1(k/n)}{n}}$ commute, this is equal to
$$
\Big( \prod_{k=J+1}^{K-1}e^{-i\frac{H_1(k/n)}{n}}H_1(\frac{k}{n})^mH_1(\frac{k-1}{n})^{-m}\Big)e^{-i\frac{H_1(J/n)}{n}},
$$
and since $H_1$ is essentially self adjoint, we get $\|e^{-i\frac{H_1(k/n)}{n}}\|_{L^2 \rightarrow L^2} =1 $. Thus
$$
\big\| H_1(\frac{K-1}{n})^m\prod_{k=J}^{K-1}e^{-i\frac{H_1(k/n)}{n}}H_1(\frac{J}{n})^{-m}\big\|_{L^2 \rightarrow L^2} \leq \prod_{k=J+1}^K \|H_1(\frac{k}{n})^mH_1(\frac{k-1}{n})^{-m}\|_{L^2\rightarrow L^2}.
$$
For all $k$,
\begin{multline*}
\|H_1(\frac{k}{n})^mH_1(\frac{k-1}{n})^{-m}\|_{L^2\rightarrow L^2} = \|1 + (H_1(\frac{k}{n})^m- H_1(\frac{k-1}{n})^m)H_1(\frac{k-1}{n})^{-m}\|_{L^2\rightarrow L^2} 
\\
\leq 1 + C \|H_1(\frac{k}{n})- H_1(\frac{k-1}{n})\|_{H^1 \rightarrow L^2}.
\end{multline*}
We get
$$
\prod_{k=J+1}^K \|H_1(\frac{k}{n})^mH_1(\frac{k-1}{n})^{-m}\|_{L^2\rightarrow L^2} \leq e^{C\sum_{k=J+1}^K  \|H_1(\frac{k}{n})- H_1(\frac{k-1}{n})\|_{H^1 \rightarrow L^2}},
$$
which is uniformly bounded in $n$ since $H_1$ is of bounded variations. Indeed, 
$$
\sum_{k=J+1}^K  \|H_1(\frac{k}{n})- H_1(\frac{k-1}{n})\|_{H^1 \rightarrow L^2}\leq \sum_{k=J+1}^K  \|(\dot q(\frac{k}{n})-\dot q (\frac{k-1}{n})) \cdot \grad\|_{H^1 \rightarrow L^2}\leq \|\ddot q\|_{L^1}
$$
We deduce from that the result.
\end{proof}

We now get back to the proof of Proposition \ref{lem-cont}. Writing $H_1(q,t) = H_1(t)$ and $U_1(n,q,t,s) = U_1(n,t,s)$, we have 
\begin{multline*}
\|U_1(n,q_1,t,s) - U_1(n,q_2,t,s)\|_{H^{\sigma} \rightarrow H^{\sigma-1}}\\ \leq 
\|e^{-i(t-\frac{K}{n})H_1(q_1,\frac{K}{n})} - e^{-i(t-\frac{K}{n}) H_1(q_2,\frac{K}{n})}\|_{H^{\sigma} \rightarrow H^{\sigma-1}} \|U_1(n,q_1,\frac{K}{n},s)\|_{H^{\sigma} \rightarrow H^{\sigma}} +\\
\| e^{-i(t-\frac{K}{n}) H_1(q_2,t)}\|_{H^{\sigma-1} \rightarrow H^{\sigma-1}} \|U_1(n,q_1,\frac{K}{n},s) - U_1(n,q_2,\frac{K}{n},s)\|_{H^{\sigma} \rightarrow H^{{\sigma-1}}}.
\end{multline*}\normalsize

Because of the boundedness from $H^{\sigma-1}$ to $H^{\sigma-1}$ of $e^{itH_1(t_0)}$ and from $H^{\sigma}$ to $H^{\sigma}$ of $U_1(n,q_1,\frac{K}{n},s)$ for all $t,t_0$ (see Remark \ref{rkinterpol}) and Proposition \ref{prop-linH1}, we get that
$$
\|e^{-i(t-K/n)H_1(q_1,s+K/n)} - e^{-i(t-K/n) H_1(q_2,t)}\|_{H^{\sigma} \rightarrow H^{\sigma-1}}
$$
$$
 \leq 3 |t-K/n| \|H_1(q_1,s+K/n) - H_1(q_2,s+K/n)\|_{H^{\sigma} \rightarrow H^{\sigma-1}}.
$$
This ensures that
\begin{multline*}
\|U_1(n,q_1,t,s) - U_1(n,q_2,t,s)\|_{H^{\sigma} \rightarrow H^{\sigma-1}} \\\leq Ce^{C \|\ddot q\|_{L^1}} |t-K/n| \|\dot{q}_1 - \dot{q}_2\|_{L^\infty} +
\|U_1(n,q_1,K/n,s) - U_1(n,q_2,K/n,s)\|_{H^{\sigma} \rightarrow H^{\sigma-1}}
\end{multline*}
which yields by induction
$$
\|U_1(n,q_1,t,s) - U_1(n,q_2,t,s)\|_{H^{\sigma} \rightarrow H^{\sigma-1}} \leq Ce^{C \|\ddot q\|_{L^1}} |t-s| \|\dot{q}_1 - \dot{q}_2\|_{L^\infty} .
$$
By letting $n$ go to $\infty$, we get
\begin{equation}\label{H1cont}
\|U_1(q_1,t,s) - U_1(q_2,t,s)\|_{H^{\sigma} \rightarrow H^{\sigma-1}} \leq Ce^{C \|\ddot q\|_{L^1}} |t-s| \|\dot{q}_1 - \dot{q}_2\|_{L^\infty}  .
\end{equation}
What is more, we have  $U_{q}(t,s) = I_q(t)^{-1} U_1(q,t,s) I_q(s)$ and $I(t)$ is an isometry of $H^{\sigma}$, and satisfies
$$
\|I_{q_1}(t) - I_{q_2}(t)\|_{H^{{\sigma}}\rightarrow H^{\sigma-1}} \lesssim |q_1(t) - q_2(t)| \leq T \|\dot{q}_1 - \dot{q}_2\|_{L^\infty}.
$$
Indeed, let $u\in H^{\sigma}$, $v\in H^{1-\sigma}$ and define $F_q := \int \overline v I_q(t) u$.\\
 We have $\grad_q F = \int \overline v I_q(t) \grad u$, from which we get $|\grad_q F| \leq \|v\|_{H^{1-\sigma}}\|u\|_{H^{\sigma}}$. And thus
$$
|\an{v, I_{q_1}(t)u - I_{q_2}(t)u}| \leq |F_{q_1} -  F_{q_2}| \leq |q_1(t)   - q_2(t)| \; \|v\|_{H^{1-\sigma}}\|u\|_{H^{\sigma}}.
$$

We get
$$
\|I_{q_1}(t) - I_{q_2}(t)\|_{H^{\sigma}\rightarrow H^{\sigma-1}}  \lesssim |q_1(t) - q_2(t)| \leq T \|\dot{q}_1 - \dot{q}_2\|_{L^\infty}.
$$

We finally get
$$
\|U_{q_1}(t,s) - U_{q_2}(t,s)\|_{H^{\sigma} \rightarrow H^{\sigma-1}} \leq Ce^{C \|\ddot q\|_{L^1}} T \|\dot{q}_1 - \dot{q}_2\|_{L^\infty}   .
$$

\end{proof}

Therefore, putting together Proposition \ref{prop-linH1} and \ref{lem-cont} we obtain the proof of Theorem \ref{teo1} in the case of a single nucleus.

\subsection{Several nuclei}\label{multi}

We now present the necessary modifications to deal with the case of several moving nuclei, i.e. when the potential $V$ takes the form
$$
V(x,t)=-\sum_{k=1}^N\frac{Z_k}{|x-q_k(t)|},\qquad q_k(0)=a_k,
$$
where the $a_k$ satisfy
$$
\min \{|a_k - a_l|\ |\  k\neq l\} = 8 \varepsilon_0 > 0.
$$
To ensure that at every time $t\in [0,T]$ we have 
$$
\min \{|q_k(t) - q_l(t)|\  |\  k\neq l\} = 4 \varepsilon_0 > 0,
$$
we require for all $k$ that 
$$T\sup_{t} |\dot q_k(t)| \leq 2 \varepsilon_0.$$

The idea is to adapt the strategy developed for a single nucleus by introducing suitable cutoffs in order to deal with each singularity simultaneously and without mutual interference. This means that instead of doing the change of variable, $x\leftarrow x+q(t)$ as in the single nucleus case, we do the change $ x\leftarrow x+ q_k(t) - a_k$ but only around the singularity $q_k(t)$. Since $q_k(t)$ and $a_k$ are close (at least for small times), we can do it only around $a_k$. We are inspired by a similar strategy used by Kato and Yajima in \cite{katoyajim}, who define a "local pseudo-Lorentz" transformation mapping retarded Lienard-Wiechert potentials to Coulomb potentials at fixed positions.
To define this change of variable, we introduce a symmetric, real valued cut-off function $\zeta(x)=\zeta(|x|)$ (with a slight abuse of notation we will denote with $\zeta$ both the function and its radial component) having the following properties: 
\medskip
\begin{itemize}
\item
$\zeta(|x|)\in C^\infty (\mathbb{R}^+)$;
\medskip
\item
$\zeta(|x|)=1$ for $|x|\leq 1$;
\medskip
\item
$\zeta(|x|)=0$ for $|x|\geq 2$;
\medskip
\item
$\zeta(|x|)\in[0,1]$;
\medskip
\item
$\zeta'(|x|)|\leq3/2$.
\end{itemize}

In view of constructing our simultaneous "nuclei-freezing" transformation, we introduce for each $k=1,\dots N$ the functions
\begin{equation}\label{transf}
\phi^k(t,x)=x+\Tk(x,t)
\end{equation}
where we are denoting with 
$$
\Tk(x,t)=\zeta\left(\frac{x-a_k}{\varepsilon_0}\right)(q_k(t)-a_k).
$$
We write 
$$
\phi(t,x) = x + \sum_k \Tk(x,t)
$$
and
$$
 \Phi(t) : u\mapsto u(t,\phi(t,x)).
$$

Note that here $\Phi(t)$ replaces the $I(t)$ defined by \eqref{It}. We also remark that from now on, constants may not only depend on $(Z_k)_k$ but also on $\varepsilon_0$ but in any case, they do not depend on $T$.

\begin{lemma}\label{lem-stabL2} There exist constants $C$ and $M_0$ such that for all $t\in [0,T]$, $T \sup_k\|\dot q_k\|_{L^\infty}\leq M_0$ and all $u \in  L^2$,
$$
\frac1{C} \|u\|_{L^2} \leq \|\Phi(t) u\|_{L^2} \leq C \|u\|_{L^2}.
$$
\end{lemma}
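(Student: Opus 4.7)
The claim is a statement about the change-of-variables map $\Phi(t)u(x) = u(\phi(t,x))$ with $\phi(t,x) = x + \sum_k \mathcal{T}_{q_k,\zeta}(x,t)$. The plan is to verify that $\phi(t,\cdot)$ is a global $C^1$-diffeomorphism of $\mathbb{R}^3$ whose Jacobian determinant is pinched between two positive constants, so that the equivalence of $L^2$ norms follows by the standard change of variables formula.

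First I would exploit the separation assumption to ensure disjointness of supports: since each cutoff $\zeta((x-a_k)/\varepsilon_0)$ is supported in $\overline{B}(a_k, 2\varepsilon_0)$ and $|a_k-a_l| \geq 8\varepsilon_0$ for $k\neq l$, at most one term of $\phi(t,\cdot)-x$ is active at any given point. Differentiating gives
$$
D_x \phi(t,x) = I + \sum_{k=1}^N \frac{1}{\varepsilon_0} (q_k(t)-a_k) \otimes (\nabla\zeta)\bigl((x-a_k)/\varepsilon_0\bigr),
$$
and, using $|\zeta'|\leq 3/2$ together with $|q_k(t)-a_k|\leq T\|\dot q_k\|_{L^\infty}$, the operator norm of the perturbation satisfies
$$
\|D_x\phi(t,x) - I\|_{op} \leq \frac{3}{2\varepsilon_0}\, T \sup_k \|\dot q_k\|_{L^\infty} \leq \frac{3 M_0}{2\varepsilon_0}.
$$

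Next I would choose $M_0$ small enough (any $M_0 \leq \varepsilon_0/3$ works) so that $\|D_x\phi-I\|_{op}\leq 1/2$ uniformly in $t\in[0,T]$ and $x\in\mathbb{R}^3$. This makes $D_x\phi(t,x)$ invertible everywhere with $|\det D_x\phi(t,x)|$ pinched in some interval $[1/C,C]$ depending only on the cutoff. Since $\phi(t,\cdot) - \mathrm{Id}$ has compact support (contained in $\bigcup_k B(a_k,2\varepsilon_0)$), the map $\phi(t,\cdot)$ is proper; combined with the everywhere-invertibility of its differential, Hadamard's global inverse function theorem yields that $\phi(t,\cdot)$ is a $C^1$-diffeomorphism of $\mathbb{R}^3$.

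Finally, the change of variables $y=\phi(t,x)$ gives
$$
\|\Phi(t) u\|_{L^2}^2 = \int_{\mathbb{R}^3} |u(\phi(t,x))|^2\, dx = \int_{\mathbb{R}^3} |u(y)|^2 \, \bigl|\det D_y\phi^{-1}(t,y)\bigr| \, dy,
$$
and the two-sided control on $|\det D_x\phi|$ translates into the same control on $|\det D_y\phi^{-1}|$, yielding the claimed equivalence $\tfrac{1}{C}\|u\|_{L^2} \leq \|\Phi(t)u\|_{L^2} \leq C\|u\|_{L^2}$. The only nontrivial technical point is the global bijectivity of $\phi(t,\cdot)$; this is handled cleanly by Hadamard's theorem since $\phi$ equals the identity outside a compact set, but one could alternatively reduce to a purely local statement on each ball $B(a_k,2\varepsilon_0)$ using the disjointness of supports established at the outset.
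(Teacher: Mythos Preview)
Your proof is correct and follows essentially the same approach as the paper: compute the Jacobian of $\phi(t,\cdot)$, use the disjoint-support structure and the bound $|q_k(t)-a_k|\leq T\|\dot q_k\|_{L^\infty}$ to show it is a small perturbation of the identity, deduce that $\phi(t,\cdot)$ is a bijection with Jacobian determinant pinched near $1$, and conclude by change of variables. The only difference is cosmetic: you invoke Hadamard's global inverse function theorem explicitly for the bijectivity, whereas the paper simply asserts it once the Jacobian is close to the identity.
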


\begin{proof} By performing the change of variable $x\leftarrow \phi(t,x)$, we get
$$
\|\Phi(t) u\|_{L^2} = \|\textrm{jac}(\phi(t))^{-1/2} u\|_{L^2},
$$
where $\textrm{jac}(\phi(t)) = |\textrm{det} \textrm{Jac}(\phi(t))|$ and $\textrm{Jac}(\phi(t))$ is the Jacobian matrix associated to $\phi$. 

Given that
$$
\partial_j \phi(t,x) = e_j + \sum_k \zeta'\Big( \Big| \frac{x-a_k}{\varepsilon_0}\Big|\Big) \frac{x_j - a_{k,j}}{ \varepsilon_0 |x - a_k|} (q_k(t) - a_k)
$$
where $e_j$ is the $j$-th vector of the canonical basis of $\R^3$, and $a_{k,j}$ is the $j$-th coordinate of $a_k$, and given that the supports of $x\mapsto  \zeta'\Big( \Big| \frac{x-a_k}{\varepsilon_0}\Big|\Big)$ are disjoint and away from $a_k$, we get that
$$
|\partial_j \phi(t,x) - e_j| \leq \frac32 T \sup_k\|\dot q_k\|_{L^\infty}\frac1{\varepsilon_0}
$$
and thus that
$$
|\textrm{Jac } (\phi(t)) - I_3| \lesssim T \sup_k\|\dot q_k\|_{L^\infty}.
$$
Therefore there exists $M_0$ such that for all $T \sup_k\|\dot q_k\|_{L^\infty}\leq M_0$, $x\mapsto \phi(t,x)$ is a bijection and such that for all $t\in [0,T]$,
$$
|\textrm{jac }(\phi(t))^{1/2}  - 1| \lesssim T \sup_k\|\dot q_k\|_{L^\infty} \textrm{ and }|\textrm{jac }(\phi(t))^{-1/2}  - 1|\lesssim T \sup_k\|\dot q_k\|_{L^\infty}
$$
which ensures the lemma.
\end{proof}

\begin{lemma} \label{lem-stabH1} There exist $M_0$ and $C$ such that for all $T \sup_k\|\dot q_k\|_{L^\infty}\leq M_0$ all $u \in H^1$ and all $t\in [0,T]$, the following representations and estimates hold true
\begin{eqnarray*}
\nabla (\Phi(t) u) = (\nabla u) \circ \phi(t) + P_t(u) & \ \ \  \textrm{ where }\ \ \  \|P_t\|_{H^1 \rightarrow L^2} \lesssim T \sup_k\|\dot q_k\|_{L^\infty} \\
\nabla (\Phi(t)^{-1} u) = (\nabla u)\circ \phi(t)^{-1} + Q_t(u) & \ \ \ \textrm{ where }\ \ \ \|Q_t\|_{H^1\rightarrow L^2} \lesssim T \sup_k\|\dot q_k\|_{L^\infty} 
\end{eqnarray*}
and
$$
\frac1{C} \|u\|_{H^1} \leq \|\Phi(t) u \|_{H^1} \leq C \|u\|_{H^1} \; ,\; \frac1{C} \|u\|_{H^2} \leq \|\Phi(t) u \|_{H^2} \leq C \|u\|_{H^2}.
$$

\end{lemma}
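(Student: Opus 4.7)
The proof mirrors that of Lemma \ref{lem-stabL2}: the chain rule identifies $P_t$ and $Q_t$ explicitly in terms of Jacobians whose distance from the identity has already been quantified there. Writing $J(t,x):=\Jac(\phi(t))(x)$, the chain rule yields
$$
\nabla(\Phi(t)u)(x)=J(t,x)^T(\nabla u)(\phi(t,x)),
$$
so I would simply set $P_t(u)(x):=(J(t,x)^T-I_3)(\nabla u)(\phi(t,x))$. The computation already carried out in the proof of Lemma \ref{lem-stabL2} gives $\|J(t,\cdot)^T-I_3\|_{L^\infty}\lesssim T\sup_k\|\dot q_k\|_{L^\infty}/\varepsilon_0$. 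Performing the change of variables $y=\phi(t,x)$ and invoking the uniform boundedness of $\mathrm{jac}(\phi(t)^{-1})$ from Lemma \ref{lem-stabL2}, I get $\|(\nabla u)\circ\phi(t)\|_{L^2}\lesssim\|\nabla u\|_{L^2}$, and combining these delivers the claimed $\|P_t\|_{H^1\rightarrow L^2}\lesssim T\sup_k\|\dot q_k\|_{L^\infty}$.

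For $\Phi(t)^{-1}$, the inverse function theorem gives $\Jac(\phi(t)^{-1})(x)=J(t,\phi(t)^{-1}(x))^{-1}$. For $M_0$ small enough, $\|J(t,\cdot)-I_3\|_{L^\infty}\leq 1/2$, and a Neumann series produces $\|J(t,\cdot)^{-1}-I_3\|_{L^\infty}\lesssim T\sup_k\|\dot q_k\|_{L^\infty}$. Setting $Q_t(u)(x):=(\Jac(\phi(t)^{-1})(x)^T-I_3)(\nabla u)(\phi(t)^{-1}(x))$ and running the same change-of-variable argument (now backwards) produces $\|Q_t\|_{H^1\rightarrow L^2}\lesssim T\sup_k\|\dot q_k\|_{L^\infty}$.

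The $H^1$ stability then follows at once: combining Lemma \ref{lem-stabL2} with the identity $\nabla(\Phi(t) u) = (\nabla u)\circ\phi(t) + P_t(u)$ and the $L^2$ change-of-variable bound yields $\|\Phi(t) u\|_{H^1}\leq C\|u\|_{H^1}$, and the reverse inequality is obtained by applying the same reasoning to $\Phi(t)^{-1}$ via $Q_t$. For $H^2$, differentiating once more produces a main term $\sum_{m,n}(\partial_m\partial_n u)\circ\phi\,\partial_i\phi_n\partial_j\phi_m$, controlled in $L^2$ by $C\|u\|_{H^2}$ after the same change of variable, plus a commutator term $\sum_m(\partial_m u)\circ\phi\,\partial_i\partial_j\phi_m$. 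The latter is the delicate point, and the place where $M_0$ must be allowed to depend on $\varepsilon_0$: second derivatives of $\phi$ contain $\zeta''/\varepsilon_0^2$ but retain a multiplicative factor $q_k(t)-a_k$ of size $\leq T\sup_k\|\dot q_k\|_{L^\infty}$, so the total contribution is bounded by $C\varepsilon_0^{-2}T\sup_k\|\dot q_k\|_{L^\infty}\|u\|_{H^1}$, harmless for $M_0$ sufficiently small. The reverse $H^2$ inequality is obtained analogously via the $\Phi(t)^{-1}$ representation, using that $\nabla^2(\phi(t)^{-1})$ satisfies the same type of bound (differentiate $\phi\circ\phi^{-1}=\mathrm{id}$ twice and solve for $\nabla^2\phi^{-1}$).
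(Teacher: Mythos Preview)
Your proposal is correct and follows essentially the same approach as the paper: identify $P_t$ and $Q_t$ as the Jacobian-minus-identity acting on the pulled-back gradient, use the smallness of $\Jac(\phi)-I_3$ from Lemma~\ref{lem-stabL2}, and for $H^2$ differentiate once more and control the extra term via the explicit bound on second derivatives of $\phi$. The only cosmetic difference is that the paper works with the Laplacian rather than the full Hessian for the $H^2$ estimate, and states the inverse-Jacobian bound directly rather than via a Neumann series, but the substance is identical.
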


\begin{proof} We have 
$$
\nabla (\Phi(t) u) = \textrm{ Jac }(\phi(t))(\nabla u) \circ \phi(t).
$$
As $\textrm{Jac}(\phi(t))$ is a perturbation of the identity, we get
$$
\nabla (\Phi(t) u) = (\nabla u)\circ \phi(t) + A(t,x) (\nabla u) \circ \phi(t),
$$
where $A(t,x)$ is a matrix whose norm is uniformly bounded by $T \sup_k\|\dot q_k\|_{L^\infty}$. We have 
$$
\| A(t,x) (\nabla u) \circ \phi\|_{L^2} \lesssim T \sup_k\|\dot q_k\|_{L^\infty} \|(\nabla u) \circ \phi\|_{L^2}
$$
and thanks to Lemma \ref{lem-stabL2}
$$
\| A(t,x) (\nabla u) \circ \phi\|_{L^2} \lesssim T \sup_k\|\dot q_k\|_{L^\infty} \|u\|_{H^1}.
$$
Hence, 
\begin{equation}\label{Ft}
P_t(u) = A(t,x) (\nabla u) \circ \phi(t)
\end{equation}
 satisfies the stated properties.

For the same reasons, we get
$$
Q_t(u) = B(t,x) (\grad u) \circ \phi(t)^{-1}
$$
with $B(t,x) = \textrm{Jac }(\phi(t)^{-1})-I_3$.

Since $\textrm{Jac }(\phi(t))$ is close to the identity, so is its inverse $\textrm{Jac }(\phi(t)^{-1})$ and thus $\|B(t,x)\|\lesssim T \sup_k\|\dot q_k\|_{L^\infty}$. This yields
$$
\|Q_t(u)\|_{L^2} \lesssim T \sup_k\|\dot q_k\|_{L^\infty} \|\nabla u\circ \phi(t)^{-1}\|_{L^2}
$$
and thanks to Lemma \ref{lem-stabL2}, we get that 
$$
\|Q_t(u)\|_{L^2} \lesssim T \sup_k\|\dot q_k\|_{L^\infty} \|u\|_{H^1}.
$$

The equalities involving $\nabla (\Phi(t) u)$ and $\nabla (\Phi(t)^{-1}u)$ ensure the validity of inequalities for $\|\Phi(t) u\|_{H^1}$.

For $H^2$, we have to compute $\lap (\Phi(t) u)$. We have
$$
\lap (\Phi(t) u) = (\Jac (\phi(t) ) \nabla) \cdot \nabla u \circ \phi(t) + \nabla . P_t(u).
$$
As $\Jac (\phi(t))$ is a uniform perturbation of the identity, we have that \\$(\Jac (\phi(t)) \nabla) \cdot \nabla u \circ \phi(t)$ is a uniform perturbation of $(\lap u)\circ \phi(t)$. Moreover, we have
$$
\nabla \cdot P_t(u) = \Jac (\phi(t)) \Big ((\nabla \cdot A(t,x) ) \cdot \nabla u \circ \phi(t) + (A(t,x) \nabla)\cdot \nabla u\circ \phi(t) \Big) .
$$
We have 
$$
\|(A(t,x) \nabla)\cdot \nabla u  \|_{L^2} \lesssim T \sup_k\|\dot q_k\|_{L^\infty} \|u\|_{H^2}.
$$
Besides, $\nabla \cdot A(t,x) = \lap \phi(t)$, which gives
$$
\nabla \cdot A(t,x) = \sum _k \Big[ \frac2{|x-a_k|\varepsilon_0}\zeta' \left(\frac{|x-a_k|}{\varepsilon_0}\right) + \frac1{\varepsilon_0^2} \zeta''\left(\frac{|x-a_k|}{\varepsilon_0}\right)\Big] (q_k(t)-a_k).
$$
Assuming $\varepsilon_0 < 1$ and using that the supports of $\zeta'\left(\frac{|x-a_k|}{\varepsilon_0}\right))$ are outside a ball of center $a_k$ and radius $\varepsilon_0$, we get
$$
\|(\nabla \cdot A(t,x) ) \cdot \nabla u\|_{L^2} \lesssim T \sup_k\|\dot q_k\|_{L^\infty} \|u\|_{H^2}.
$$
(We recall that the supports of $\zeta \Big( \frac{|x-a_k|}{\varepsilon_0}\Big)$ are disjoint.)

This ensures that 
$$
\|\Phi(t) u\|_{H^2} \leq C \|u\|_{H^2}
$$
if $T \sup_k\|\dot q_k\|_{L^\infty}$ is small enough. The reverse inequality can be deduced in the same way.

\end{proof}

The following result is the analogue of Lemma \ref{lem-changevar} in the multinuclear case.
\begin{lemma}\label{prop-changevar+} If $u$ satisfies 
$
i\partial_t u = H(t) u
$
then $v = \Phi(t) u$ satisfies 
$
i\partial_t v = H_N(t) v
$
with 
\begin{equation}\label{multiHn}
H_N(t)v = \D v+\beta v - \sum_k \frac{Z_k}{|x-a_k|} v
\end{equation}
$$+ i\partial_t \phi(t,x)\cdot  \Big( \nabla v - P_t(\Phi(t)^{-1}v)\Big) +\overrightarrow{\alpha}\cdot P_t(\Phi(t)^{-1}v)+R(t,x) v  
$$
where $P_t$ is defined as in Lemma \ref{lem-stabH1} (see also \eqref{Ft}),
$$
R(t,x) = \sum_k Z_k \Big( \frac1{|x-a_k|} - \frac1{|\phi(t,x) - q_k(t)|}\Big).
$$
and $\overrightarrow{\alpha}$ is the vector of the $\alpha_k$ matrices.

\end{lemma}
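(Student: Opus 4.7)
The plan is a direct chain-rule computation, using Lemma \ref{lem-stabH1} to convert derivatives of $u$ composed with $\phi$ into derivatives of $v = \Phi(t)u$. Since $v(t,x) = u(t,\phi(t,x))$, differentiating in $t$ and substituting $\partial_t u = -iH(t)u$ gives
\[
i\partial_t v(t,x) = \bigl(H(t)u\bigr)(\phi(t,x)) + i\,\partial_t\phi(t,x)\cdot (\nabla u)(\phi(t,x)).
\]
The remainder of the argument is rewriting every quantity on the right-hand side in terms of $v$.

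First I would handle the kinetic and mass parts. Using $\mathcal{D} = -i\overrightarrow{\alpha}\cdot\nabla$ together with the identity
\[
(\nabla u)\circ \phi(t) = \nabla v - P_t(u) = \nabla v - P_t(\Phi(t)^{-1}v)
\]
from Lemma \ref{lem-stabH1}, one rewrites $(\mathcal{D}u)(\phi)$ as $\mathcal{D}v$ plus a bounded correction carried by $P_t(\Phi(t)^{-1}v)$, and $(\beta u)(\phi) = \beta v$ trivially. Substituting the same identity into the convective term $i\,\partial_t\phi\cdot (\nabla u)\circ\phi$ produces the $i\,\partial_t\phi\cdot(\nabla v - P_t(\Phi(t)^{-1}v))$ summand appearing in $H_N(t)$.

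Next I would transplant each moving Coulomb singularity to a fixed one. The natural algebraic split
\[
-\sum_k \frac{Z_k}{|\phi(t,x)-q_k(t)|}\,v = -\sum_k \frac{Z_k}{|x-a_k|}\,v + R(t,x)\,v
\]
with $R$ as in the statement does the job, and the key geometric observation justifying it is that the cutoffs $\zeta(\tfrac{\cdot-a_k}{\varepsilon_0})$ have pairwise disjoint supports (thanks to $|a_k-a_\ell|\geq 8\varepsilon_0$) and $\zeta\equiv 1$ on $B(0,1)$. Consequently, for $|x-a_k|<\varepsilon_0$ one has $\phi(t,x)-q_k(t) = x - a_k$, so the two singular terms in the $k$-th summand of $R$ cancel exactly near $a_k$; and on the complement of $\bigcup_\ell B(a_\ell,2\varepsilon_0)$ one has $\phi(t,x) = x$, so $R\equiv 0$ there. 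Thus $R$ is smooth at each $a_k$ and compactly supported in an annular shell between the nuclei.

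Assembling the four contributions (free Dirac, mass, regularized Coulomb, and convective/remainder pieces) yields exactly the formula for $H_N(t)v$ displayed in the statement. The argument is essentially bookkeeping once Lemma \ref{lem-stabH1} is available; I do not foresee a substantive obstacle, the only subtlety being the geometric matching of singularities described in the previous paragraph, which is what justifies writing the result in the clean form $\mathcal{D} + \beta - \sum_k Z_k/|x-a_k|$ plus lower-order perturbations.
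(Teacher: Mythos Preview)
Your computation is correct and is exactly the ``straightforward computation'' the paper has in mind: differentiate $v=u\circ\phi$ by the chain rule, substitute $i\partial_t u = H(t)u$, and use the identity $(\nabla u)\circ\phi = \nabla v - P_t(\Phi(t)^{-1}v)$ from Lemma~\ref{lem-stabH1} to rewrite both the Dirac and the convective pieces in terms of $v$. The algebraic split defining $R(t,x)$ is a tautology, so the lemma is done.

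One factual slip in your commentary on $R$: it is \emph{not} compactly supported. On the complement of $\bigcup_\ell B(a_\ell,2\varepsilon_0)$ you do have $\phi(t,x)=x$, but then the $k$-th summand of $R$ becomes $Z_k\bigl(\tfrac{1}{|x-a_k|}-\tfrac{1}{|x-q_k(t)|}\bigr)$, which does not vanish since $q_k(t)\neq a_k$ in general. What is true (and is what the paper uses in Proposition~\ref{prop-estimH1sn}) is that each summand of $R$ is supported outside $B(a_k,\varepsilon_0)$, because inside that ball your cancellation $\phi(t,x)-q_k(t)=x-a_k$ holds; combined with $|\phi(t,x)-q_k(t)|\gtrsim\varepsilon_0$ outside, this gives $R\in L^\infty$, not compact support. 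This does not affect the present lemma, but it will matter when you estimate $R$ later.
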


\begin{proof} Straightforward computation. \end{proof}

In view of defining the two-parameter propagator and use Kato's theory as in the one nucleus case, we need the following

\begin{proposition}\label{prop-estimH1sn} Assume that for all $k$, $|Z_k| < \frac{\sqrt 3}{2}$. There exist $C_1$, and $C$ such that if the trajectories $q_k(t)$ are such that 
\begin{equation}\label{hpqnuclei}
(1+T)\sup_{k}\|\dot q_k(t)\|_{L^\infty}\leq C_1,\quad \sup_{k} \|\ddot{q}_k\|_{L^1([0,T])} < \infty,
\end{equation}
then there exists $\delta\in \R_+$ such that for all $t\in [0,T]$,
$$
-\delta\|u\|_{L^2}^2+ \frac1{C}\|u\|_{H^1}^2 \leq \|H_N(t)u\|_{L^2}^2 \leq C \|u\|_{H^1}^2
$$
and
\begin{equation}\label{crucbound}
\| i\partial_t H_N(t)\|_{L^1([0,T],H^1 \rightarrow L^2)} \leq C\sup_k( \|\ddot q_k\|_{L^1} + T \|\dot q_k\|_{L^\infty}).
\end{equation}
Moreover, for any $\sigma\in[1,\frac32)$, the operator $H_N(t)$ is bounded from $H^{\sigma}$ into $H^{\sigma-1}$.

\end{proposition}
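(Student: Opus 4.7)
The strategy is to decompose $H_N(t) = H_0 + P(t)$, where $H_0 = \D + \beta - \sum_k Z_k/|x-a_k|$ is the \emph{stationary} multi-nuclear Dirac-Coulomb operator with singularities frozen at the initial positions, and $P(t)$ collects all the remaining terms appearing in \eqref{multiHn}. Under $|Z_k|<\sqrt{3}/2$, the Levitan-Otelbaev theorem cited in Remark \ref{selfadjrk} ensures that $H_0$ is self-adjoint with domain $H^1$, and the equivalence of its graph norm with the $H^1$ norm yields the model estimate
$$C^{-1}\|u\|_{H^1}^2 - \|u\|_{L^2}^2 \leq \|H_0 u\|_{L^2}^2 \leq C\|u\|_{H^1}^2,$$
which we will then perturb by $P(t)$.

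The central task is to show that $P(t)$ is \emph{small} as a map $H^1\to L^2$, up to a term controllable by the $L^2$ norm. The key observation concerns $R(t,x)$: since $|a_k - a_l|\geq 8\varepsilon_0$ the supports of the cutoffs $\zeta(|\cdot-a_k|/\varepsilon_0)$ are pairwise disjoint, and on $B(a_k,\varepsilon_0)$ that cutoff equals $1$ while the others vanish, so $\phi(t,x) - q_k(t) = x-a_k$ identically and the $k$-th summand of $R$ vanishes. In the transition annulus $B(a_k,2\varepsilon_0)\setminus B(a_k,\varepsilon_0)$ and in the far region where $\phi=x$, the smallness hypothesis $(1+T)\sup_k\|\dot q_k\|_\infty\leq C_1$ (with $C_1$ small relative to $\varepsilon_0$) keeps $|q_k(t)-a_k|$ well below $\varepsilon_0$, so both $|x-a_k|$ and $|\phi-q_k(t)|$ stay bounded from below by a positive multiple of $\varepsilon_0$. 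Hence $R$ is a bounded function on $\R^3$ with bound depending only on $\varepsilon_0$ and the $|Z_k|$, yielding $\|Rv\|_{L^2}\lesssim\|v\|_{L^2}$. The remaining ingredients of $P(t)$ involve $\partial_t\phi = \sum_k\zeta(|\cdot-a_k|/\varepsilon_0)\dot q_k(t)$, pointwise bounded by $\sup_k\|\dot q_k\|_\infty$, together with the operator $P_t\circ\Phi(t)^{-1}$ from Lemma \ref{lem-stabH1}, of operator norm $\lesssim T\sup_k\|\dot q_k\|_\infty$ from $H^1$ to $L^2$. Summing the contributions gives a splitting
$$\|P(t)v\|_{L^2}\leq C'_1(1+T)\sup_k\|\dot q_k\|_\infty \,\|v\|_{H^1}+C'_2\|v\|_{L^2},$$
where the $H^1$-coefficient is small by hypothesis while the $L^2$-coefficient is finite but not small.

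The upper bound $\|H_N v\|_{L^2}^2\leq C\|v\|_{H^1}^2$ then follows from the triangle inequality together with the model estimate. For the lower bound we use $\|H_N v\|_{L^2}^2\geq \tfrac12\|H_0 v\|_{L^2}^2-\|Pv\|_{L^2}^2$; substituting the model estimate and the bound on $P$, and choosing the constant $C_1$ small enough (depending only on $C$, $\varepsilon_0$, $Z_k$) to absorb the $H^1$-contribution of $P$ into $\frac{1}{2C}\|v\|_{H^1}^2$, produces the desired $\|H_N v\|_{L^2}^2\geq (C')^{-1}\|v\|_{H^1}^2-\delta\|v\|_{L^2}^2$. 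For the time-derivative bound \eqref{crucbound}, only $P(t)$ depends on $t$: differentiating $i\partial_t\phi\cdot\nabla v$ produces $i\partial_{tt}\phi\cdot\nabla v=i\sum_k\zeta(|\cdot-a_k|/\varepsilon_0)\ddot q_k(t)\cdot\nabla v$, whose $H^1\to L^2$ operator norm is pointwise $\lesssim\sup_k|\ddot q_k(t)|$ and integrates to $\|\ddot q_k\|_{L^1}$, while all other time derivatives (of $A(t,x)$ in $P_t$, of $\Phi(t)^{-1}$, and of $R(t,x)$) generate coefficients linear in $\dot q_k$, pointwise $\lesssim\sup_k\|\dot q_k\|_\infty$ and integrating to $T\sup_k\|\dot q_k\|_\infty$. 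Finally, the $H^\sigma\to H^{\sigma-1}$ boundedness for $\sigma\in[1,3/2)$ follows from Lemma \ref{hardyrell} applied to each Coulomb term $Z_k/|x-a_k|$ (whose proof is local and transfers verbatim to any fixed center), combined with the trivial $H^\sigma\to H^{\sigma-1}$ boundedness of the smooth-coefficient first-order pieces and of multiplication by the bounded function $R$. The main obstacle is precisely this uniform boundedness of $R$: it relies crucially on the no-collision hypothesis $|a_k-a_l|\geq 8\varepsilon_0$ and the disjoint-cutoff structure of $\phi$, which together arrange that each Coulomb singularity is translated \emph{exactly} to the moving position $q_k(t)$ on its own small neighborhood, while the smallness of the displacement $|q_k(t)-a_k|$ prevents any spurious near-collision in the transition annulus or the far field.
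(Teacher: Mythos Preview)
Your proof is correct and follows essentially the same route as the paper's: decompose $H_N(t)$ as the stationary multi-center Dirac-Coulomb operator $H_0$ plus a perturbation $P(t)$, invoke the self-adjointness/graph-norm equivalence for $H_0$, show that the $R$-term is bounded (via the support argument you spell out) while the remaining pieces of $P(t)$ carry the small factor $(1+T)\sup_k\|\dot q_k\|_\infty$, and then differentiate term by term for \eqref{crucbound}. One small refinement: for the $H^\sigma\to H^{\sigma-1}$ boundedness you write ``multiplication by the bounded function $R$'', but mere boundedness of $R$ is not quite enough for a multiplier on $H^{\sigma-1}$; the point (implicit in your support analysis) is that $R$ is in fact smooth with all derivatives bounded, since on its support both $|x-a_k|$ and $|\phi-q_k|$ are bounded away from zero and $\phi$ is smooth.
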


\begin{proof} Thanks to the usual theory of essential self-adjointness of Dirac operators with Coulomb potentials, the proposition is already true for 
$$
\D v+\beta v - \sum_k \frac{Z_k}{|x-a_k|} v.
$$

We estimate the other terms. We have 
$$
i\partial_t \phi(t,x) = \sum_k \zeta\Big( \frac{|x-a_k|}{\varepsilon_0}\Big) \dot{q}_k(t),
$$
hence 
$$
\|i\partial_t \phi(t,x)\cdot  ( \nabla v - P_t(\Phi(t)^{-1}v))\|_{L^2} \lesssim \sum_k \sup_{t}|\dot q_k(t)| \|v\|_{H^1}.
$$
We have 
$$
 \|\overrightarrow{\alpha}\cdot P_t(\Phi(t)^{-1}v)\|_{L^2} \lesssim T \sup_k\|\dot q_k\|_{L^\infty} \|v\|_{H^1}.
$$
Finally, since $\frac1{|x-a_k|} - \frac1{|\phi(t,x) - q_k(t)|}$ is supported outside the ball of center $a_k$ and radius $\varepsilon_0$, and given that outside this ball, $|\phi(t,x) - q_k(t)| \geq |x-a_k| - |q_k(t) - a_k|$ we have that for $T \sup_k\|\dot q_k\|_{L^\infty}\leq \frac12$, $|\phi(t,x) - q_k(t)|\geq \frac{\varepsilon_0}{2}$. Therefore
$$
\|R(t,x)\|_{L^\infty([0,T]\times \R^3)} \lesssim \varepsilon_0^{-1} \sum_k |Z_k|.
$$

Therefore, assuming that the quantity $(1+T) \sup_k\|\dot q_k\|_{L^\infty}$ is small enough, we get that $H_N(t)$ satisfies
$$
-\delta\|u\|_{L^2}^2+ \frac1{C}\|u\|_{H^1}^2 \leq \|H_N(t)u\|_{L^2}^2 \leq C \|u\|_{H^1}^2 ;
$$
the $L^2$ norm appears in the term involving $R(t,x)$.

Computing the derivative of $H_N(t)$ yields
$$
\partial_t H_N(t) = i\partial_t^2 \phi(t) \cdot \Big( \nabla - P_t\circ \Phi(t)^{-1}\Big) - i\partial_t \phi \cdot \partial_t P_t \circ \Phi(t)^{-1}  + i\overrightarrow{\alpha} \cdot \partial_t P_t \circ \Phi(t) + \partial_t R(t,x).
$$
We have $\|\partial_t^2 \phi\|_{L^1} \lesssim \sup_{k} \|\ddot{q}_k\|_{L^1}$  and $\|\nabla - P_t\circ \Phi(t)^{-1}\|_{L^\infty([0,T],H^1\rightarrow L^2)}\leq C$ hence 
$$ 
\|i\partial_t^2 \phi(t) \cdot \Big( \nabla - P_t\circ \Phi(t)^{-1}\Big)\|_{L^1([0,T],H^1\rightarrow L^2)} \lesssim \sup_{k} \|\ddot{q}_k\|_{L^1}.
$$
We have $|i\partial_t \phi|\lesssim \sup_{k,t} |\dot{q}_k(t)|$ and $P_t\circ \Phi(t)^{-1} = A(t,x) \Jac (\Phi(t))^{-1} \nabla$. What is more,
$$
T|\partial_t A| = T|\partial_t \Jac (\phi(t))|   \lesssim T \sup_k\|\dot q_k\|_{L^\infty}
$$ 
and $\partial_t \Jac (\Phi(t))^{-1} = -\Jac (\Phi(t))^{-1} \partial_t A \Jac (\Phi(t))^{-1}$, hence
$$
\|\partial_t \phi\|_{L^2\rightarrow L^2} \lesssim C_1 \textrm{ and }T\|\partial_t P_t\circ \Phi(t)^{-1}\|_{H^1\rightarrow L^2} \lesssim T \sup_k\|\dot q_k\|_{L^\infty}.
$$
This gives
$$
T\sup_t\|i\partial_t \phi \cdot \partial_t P_t \circ \Phi(t)^{-1}  \|_{H^1\rightarrow L^2} \lesssim T \sup_k\|\dot q_k\|_{L^\infty} C_1
$$
and 
$$
T \sup_t \| i\overrightarrow{\alpha} \cdot \partial_t P_t \circ \Phi(t)\|_{H^1\rightarrow L^2 } \lesssim T \sup_k\|\dot q_k\|_{L^\infty}.
$$
Finally, as 
$$\partial_t \Big( \frac1{|x-a_k|} - \frac1{|\phi(t,x) - q_k(t)|}\Big) = \frac{(\partial_t \phi - \dot q_k(t))(\phi - q_k)}{|\phi-q_k|^3}$$ and it is supported outside the ball of center $a_k$ and radius $\varepsilon_0$, we get
$$
T\sup_t \| \partial_t R(t,x) \|_{H^1\rightarrow L^2 } \lesssim  T \sup_k\|\dot q_k\|_{L^\infty},
$$
which yields the result as $\|\cdot\|_{L^1([0,T]}\leq T \|\cdot\|_{L^ \infty([0,T])}$.
The last statement of the Theorem is proved when the remainders with respect to the multicenter Coulomb operator appearing in \ref{multiHn} are controlled. Multiplications by smooth decaying function are bounded operators in Sobolev spaces (see \cite{bah}, in particular Thm 1.62) and moreover the term $R$ can be bounded as in the proof of \ref{lem-stabL2}. This gives the result.
\end{proof}

We are now in position of proving the main result of this subsection.

\begin{proposition}\label{moreregprop2}
 Let $Z_k$ be such that $|Z_k| < \frac{\sqrt 3}{2}$ for each $k$, and let $q_k(t)$ satisfy assumptions \eqref{hpqnuclei}. Then the flow of the equation
$$
i\partial_t u = H(t) u
$$
with $H(t)$ given by \eqref{ndimham} is given by a family of operators $U(t,s)=U_q(t,s)$ satisfying 
$$
i\partial_t U_q(t,s) = H(t)U_q(t,s) \; , \; i\partial_s U_q(t,s) = - U_q(t,s)H(s)
$$
and 
$$
U_q(t,s) \circ U_q(s,r) = U_q(t,r).
$$ 
with 
$$U_q \in \mathcal C([0,T]^2,\mathcal L(H^\sigma))$$
for any $\sigma\in[0,\frac32)$. In particular the norms 
$$
\|U_q(t,s)\|_{H^\sigma\rightarrow H^\sigma}
$$
are uniformly bounded in $t$, $s$, and $q$.\end{proposition}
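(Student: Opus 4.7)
The plan is to mirror exactly the three-step strategy used for a single nucleus: transform the equation with $\Phi(t)$ to freeze the singularities at the fixed points $a_k$, apply Kato's theory to the transformed Hamiltonian $H_N(t)$, then undo the change of variables. Concretely, by Lemma \ref{prop-changevar+} any solution $u$ of $i\partial_t u = H(t) u$ corresponds to $v=\Phi(t) u$ satisfying $i\partial_t v = H_N(t) v$. The smallness assumption \eqref{hpqnuclei} together with Lemmas \ref{lem-stabL2} and \ref{lem-stabH1} guarantees that $\Phi(t)$ is a bi-Lipschitz isomorphism of $L^2$, $H^1$ and $H^2$ uniformly in $t\in[0,T]$, so it suffices to construct the propagator $U_N(t,s)$ of $H_N(t)$ and define $U_q(t,s):=\Phi(t)^{-1}U_N(t,s)\Phi(s)$.

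To build $U_N(t,s)$, I would invoke Kato's theorem exactly as in Proposition \ref{proppropag}. Proposition \ref{prop-estimH1sn} provides all the hypotheses needed: the two-sided bound shows that $H_N(t)+\mu$ is an isomorphism $H^1 \to L^2$ uniformly in $t$ for a sufficiently large constant $\mu>0$ (absorbing the $-\delta\|u\|_{L^2}^2$ term), while the selfadjointness of the leading multicenter Dirac--Coulomb operator on $H^1$ (Levitan--Otelbaev, cf.\ Remark \ref{selfadjrk}) upgrades $H_N(t)$ to an essentially selfadjoint operator on a common domain. The bounded-variation estimate \eqref{crucbound} furnishes the $W^{1,1}$-in-time regularity of $H_N(\cdot)$ as an element of $\mathcal L(H^1,L^2)$ that Kato's theorem requires. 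Hence a strongly continuous two-parameter family $U_N(t,s) \in \mathcal L(L^2)\cap \mathcal L(H^1)$ exists with the stated composition, identity and differentiation properties, uniformly bounded in $(t,s)$ and in $q$ (the constants depending only on the bounds in \eqref{hpqnuclei}, just as in Lemma \ref{lemn}).

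The extension to $H^\sigma$ with $\sigma\in[0,\tfrac32)$ proceeds exactly as in the single-nucleus case. By Proposition \ref{prop-estimH1sn}, $H_N(t)$ is bounded from $H^\sigma$ into $H^{\sigma-1}$ for $\sigma\in[1,\tfrac32)$; the Yosida-type approximation argument from the proof of Lemma \ref{lemn} then gives uniform bounds of the time-slicing approximants $U_N(n,t,s)$ in $\mathcal L(H^\sigma)$, and passage to the limit yields $U_N \in \mathcal C([0,T]^2, \mathcal L(H^\sigma))$ for $\sigma\in\{0,1\}$ and hence, by interpolation, for all $\sigma\in[0,1]$. The range $\sigma\in(1,\tfrac32)$ is then recovered by the same two-step interpolation bootstrap used for the single nucleus, because $H_N(t)$ has the required $H^\sigma \to H^{\sigma-1}$ boundedness. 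Composing with the isomorphism $\Phi(t)$ (which is bounded on $H^\sigma$ for $\sigma\in[0,2]$ by interpolation between Lemmas \ref{lem-stabL2} and \ref{lem-stabH1}) yields $U_q \in \mathcal C([0,T]^2, \mathcal L(H^\sigma))$ with norms uniformly bounded in $t,s,q$.

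The delicate point I expect to have to handle carefully is the presence of the lower-order terms in $H_N(t)$ that fall outside the multicenter Dirac--Coulomb piece, namely the transport term $i\partial_t\phi\cdot(\nabla - P_t\circ\Phi^{-1})$, the matrix-valued term $\vec{\alpha}\cdot P_t\circ\Phi^{-1}$, and the residual multiplication $R(t,x)$. The first obstruction is that $H_N(t)$ is only coercive on $H^1$ modulo an $L^2$ defect, which is handled cleanly by working with $H_N(t)+\mu$ and absorbing $e^{-i\mu(t-s)}$ afterwards. The second, more substantive, is that when transferring $U_N \to U_q$ via $\Phi$, the identity $U_q(t,s)\circ U_q(s,r)=U_q(t,r)$ and the relations $i\partial_t U_q = H(t)U_q$, $i\partial_s U_q = -U_q H(s)$ must be checked using $i\partial_t \Phi(t) = -i\partial_t\phi(t,\cdot)\cdot(\nabla\cdot)\circ\phi(t)$; this is precisely the computation already performed in Lemma \ref{prop-changevar+}, and so the intertwining works out automatically.
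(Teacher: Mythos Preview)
Your proposal is correct and follows exactly the approach the paper intends: transform via $\Phi(t)$ (Lemma \ref{prop-changevar+}), apply Kato's theory to $H_N(t)$ using the bounds of Proposition \ref{prop-estimH1sn}, and conjugate back to obtain $U_q(t,s)=\Phi(t)^{-1}U_N(t,s)\Phi(s)$, with the $H^\sigma$ extension for $\sigma\in[0,\tfrac32)$ handled as in the single-nucleus case. In fact the paper's own proof is a one-line reference to Kato's theory and Proposition \ref{prop-estimH1sn}, so your write-up is considerably more detailed (and correctly identifies the shift by $\mu$ needed to absorb the $-\delta\|u\|_{L^2}^2$ defect).
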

\begin{proof}
As in the single nucleus case, this result is a consequence of Kato's theory and Proposition \ref{prop-estimH1sn}. We omit the details.

\end{proof}

The following result is an ingredient needed for the continuity of the propagator $U_q$, that is forthcoming Proposition \ref{cor-contUsev}.

\begin{proposition}\label{prop-qdep} Let $q^{(1)} = (q_1^{(1)},\hdots ,q_N^{(1)})$ and $q^{(2)} = (q_1^{(2)},\hdots ,q_N^{(2)})$ be two vectors of $\mathcal C^2([0,T])$ satisfying assumptions \eqref{hpqnuclei}. Let $H_{N,j}(t)$ (resp $\Phi_j(t)$) be the operator $H_N(t)$ (resp. $\Phi(t)$) associated to $q^{(j)}$. We assume that $q^{(1)}(0) = q^{(2)}(0) = (a_1,\hdots ,a_N)$. Then for all $t\in [0,T]$ and all $\sigma\in[1,\frac32)$,
$$
\|H_{N,1}(t) - H_{N,2}(t)\|_{H^{\sigma} \rightarrow H^{\sigma-1}} \leq C  \sup_{k} (1+T) \|(\dot{q_k}^{(1)})- (\dot{q_k}^{(2)})\|_{L^\infty}.
$$
\end{proposition}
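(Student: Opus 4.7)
The plan is to use the explicit decomposition of $H_N(t)$ provided by Lemma \ref{prop-changevar+} and observe that the $q$-independent part $\mathcal D+\beta-\sum_k Z_k/|x-a_k|$ cancels in the difference $H_{N,1}(t)-H_{N,2}(t)$. What remains is the sum of three trajectory-dependent pieces: an advection term $i\partial_t\phi_j\cdot(\nabla-P_{t,j}\circ\Phi_j^{-1})$, a first-order correction $\overrightarrow\alpha\cdot P_{t,j}\circ\Phi_j^{-1}$, and a multiplication by the bounded function $R_j(t,x)$. The task is to estimate each piece in $\mathcal L(H^\sigma,H^{\sigma-1})$ by $(1+T)\delta$, where $\delta:=\sup_k\|\dot q_k^{(1)}-\dot q_k^{(2)}\|_{L^\infty}$.

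First I would collect the basic pointwise comparison estimates. From $\phi(t,x)=x+\sum_k\zeta((x-a_k)/\varepsilon_0)(q_k(t)-a_k)$ and $q^{(1)}(0)=q^{(2)}(0)$ one has
\begin{align*}
\|\phi_1(t)-\phi_2(t)\|_{L^\infty}&\lesssim T\delta,\qquad \|\partial_t\phi_1-\partial_t\phi_2\|_{L^\infty}\leq \delta,\\
\|\Jac(\phi_1)-\Jac(\phi_2)\|_{L^\infty}&\lesssim T\delta,\qquad \|\Jac(\phi_1)^{-1}-\Jac(\phi_2)^{-1}\|_{L^\infty}\lesssim T\delta,
\end{align*}
the last two following by differentiating the formula for $\phi$ and using that each $\Jac(\phi_j)$ is a small perturbation of $I_3$ (Lemma \ref{lem-stabL2}). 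For the remainder, since $R_j$ is supported where $|x-a_k|\geq\varepsilon_0$ and $|\phi_j(t,x)-q_k^{(j)}(t)|\geq\varepsilon_0/2$ on this set (exactly as in Proposition \ref{prop-estimH1sn}), a direct Lipschitz bound on $y\mapsto 1/|y|$ gives
$$
\|R_1(t)-R_2(t)\|_{L^\infty}\lesssim \varepsilon_0^{-2}\bigl(\|\phi_1-\phi_2\|_{L^\infty}+\sup_k|q_k^{(1)}-q_k^{(2)}|\bigr)\lesssim T\delta.
$$

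Next I would exploit the identity $P_{t,j}\circ\Phi_j^{-1}=A_j\,\Jac(\phi_j)^{-1}\nabla$ with $A_j=\Jac(\phi_j)-I_3$, obtained as in Lemma \ref{lem-stabH1}, and write
\begin{align*}
P_{1,t}\circ\Phi_1^{-1}-P_{2,t}\circ\Phi_2^{-1}
&=(A_1-A_2)\Jac(\phi_1)^{-1}\nabla\\
&\quad+A_2\bigl(\Jac(\phi_1)^{-1}-\Jac(\phi_2)^{-1}\bigr)\nabla,
\end{align*}
which is bounded from $H^\sigma$ to $H^{\sigma-1}$ by $T\delta$ using the above estimates and the fact that $\nabla$ maps $H^\sigma$ into $H^{\sigma-1}$ while multiplication by a smooth $L^\infty$ function with bounded derivatives preserves $H^{\sigma-1}$ (Theorem 1.62 in \cite{bah}, as already invoked in Proposition \ref{prop-estimH1sn}). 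The advection difference is then handled by the telescoping
$$
\partial_t\phi_1\cdot B_1-\partial_t\phi_2\cdot B_2=(\partial_t\phi_1-\partial_t\phi_2)\cdot B_1+\partial_t\phi_2\cdot(B_1-B_2),
$$
applied once with $B_j=\nabla$ and once with $B_j=P_{t,j}\circ\Phi_j^{-1}$; since $\|\partial_t\phi_j\|_{L^\infty}\lesssim C_1$ and each $B_j:H^\sigma\to H^{\sigma-1}$ is uniformly bounded, this contributes $(1+T)\delta$. The $\overrightarrow\alpha$-term is bounded by $T\delta$ from the decomposition above, and the $R_j$-term by $T\delta$ since $L^\infty$-multiplication embeds $\mathcal L(H^\sigma)\hookrightarrow\mathcal L(H^\sigma,H^{\sigma-1})$.

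The main obstacle will be the accounting for the full range $\sigma\in[1,3/2)$ rather than only $\sigma=1$: one has to be sure every coefficient arising in the computation (in particular $A_j$, $\Jac(\phi_j)^{-1}$, and their differences) lies in a multiplier class for $H^{\sigma-1}$. This is true because $\zeta$ is smooth and compactly supported, so $\phi_j$, $\Jac(\phi_j)$ and $\Jac(\phi_j)^{-1}$ are smooth with derivatives of arbitrary order controlled uniformly in $j$ and $t\in[0,T]$; the quantitative $L^\infty$-differences carry over to the $H^{\sigma-1}\to H^{\sigma-1}$ multiplier norm by the same Moser-type estimate. Summing the three contributions then yields $\|H_{N,1}(t)-H_{N,2}(t)\|_{H^\sigma\to H^{\sigma-1}}\lesssim (1+T)\delta$, which is exactly the claim.
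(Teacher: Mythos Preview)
Your proof is correct and follows essentially the same route as the paper's: both cancel the $q$-independent part of $H_N(t)$ from Lemma~\ref{prop-changevar+}, then estimate the Jacobian difference $\Jac(\phi_1)-\Jac(\phi_2)$, the advection difference $\partial_t\phi_1-\partial_t\phi_2$, and the remainder $R_1-R_2$ pointwise in terms of $T\delta$ and $\delta$. Your version is in fact more explicit than the paper's---you write out the telescoping for $P_{t,1}\circ\Phi_1^{-1}-P_{t,2}\circ\Phi_2^{-1}$ and for the advection term, and you address why the smooth coefficients act as multipliers on $H^{\sigma-1}$ for the full range $\sigma\in[1,3/2)$, whereas the paper passes from the $L^2$ estimate to the $H^\sigma\to H^{\sigma-1}$ bound without further comment.
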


\begin{proof} We have
$$
\partial_j \phi_1(t,x) - \partial_j \phi_2 (t,x) = \sum_k \zeta'\Big( \Big| \frac{x-a_k}{\varepsilon_0}\Big|\Big) \frac{x_j - a_{k,j}}{ \varepsilon_0 |x - a_k|} (q_k^{(1)}(t) - q_k^{(2)}(t)).
$$
Hence
$$
|\Jac(\phi_1 ) - \Jac (\phi_2)| \leq \sup_{k,t} T |(\dot{q_k}^{(1)})- (\dot{q_k}^{(2)})|.
$$
This gives in particular
$$
\|P_{t,1}(\Phi_1(t)^{-1}v))- P_{t,2}(\Phi_2(t)^{-1}v\|_{L^2} \lesssim \sup_{k,t} T |(\dot{q_k}^{(1)})- (\dot{q_k}^{(2)})| \|\nabla v\|_{L^2}
$$
hence
$$
\|P_{t,1}(\Phi_1(t)^{-1}))- P_{t,2}(\Phi_2(t)^{-1}\|_{H^\sigma\rightarrow H^{\sigma-1}} \lesssim \sup_{k,t} T |(\dot{q_k}^{(1)})- (\dot{q_k}^{(2)})|.
$$

We also have 
$$
|i\partial_t \phi_1 - i\partial_t \phi_2| \leq \sup_{k,t} |(\dot{q_k}^{(1)})(t) - (\dot{q_k}^{(2)})(t)|.
$$

And finally,
$$
R_1(t,x) - R_2(t,x) = \sum_k Z_k \Big( \frac1{|\phi(t,x) - q_k^{(2)}(t)|} - \frac1{|\phi(t,x) - q_k^{(1)}(t)|}\Big)
$$
which yields 
$$
R_1(t,x) \leq C_Z  \sup_{k,t} T |(\dot{q_k}^{(1)})- (\dot{q_k}^{(2)})|.
$$

This concludes the proof.\end{proof}

To conclude with, we prove the continuity of the propagator $U_q$ with respect to $q$.

\begin{proposition}\label{cor-contUsev} Let $q^{(1)} = (q_1^{(1)},\hdots ,q_N^{(1)})$ and $q^{(2)} = (q_1^{(2)},\hdots ,q_N^{(2)})$ be two vectors of $\mathcal C^2([0,T])$ satisfying assumptions \eqref{hpqnuclei}.  
We assume that $q^{(1)}(0) = q^{(2)}(0) = (a_1,\hdots ,a_N)$. There exists $C$ (independent from $T$ and $q^{(1)}, q^{(2)}$) such that for all $t,s \in [0,T]^2$, we have for any $\sigma\in [1,\frac 32)$,
$$
\|U_{q^{(1)}}- U_{q^{(2)}}\|_{H^{\sigma}\rightarrow H^{\sigma-1}} \leq  C \sup_{k}(1+ T) \|(\dot{q_k}^{(1)})- (\dot{q_k}^{(2)})\|_{L^\infty}
$$
\end{proposition}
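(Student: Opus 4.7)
The plan is to combine the three ingredients already assembled: the uniform bounds on $H_{N,j}(t)$ from Proposition \ref{prop-estimH1sn}, the pointwise difference estimate from Proposition \ref{prop-qdep}, and the time-slicing strategy of Proposition \ref{lem-cont}. The starting identity is
$$
U_{q^{(j)}}(t,s) = \Phi_j(t)^{-1}\, V_j(t,s)\, \Phi_j(s), \qquad j=1,2,
$$
where $V_j(t,s)$ denotes the propagator generated by $H_{N,j}$ in Proposition \ref{moreregprop2}. Writing the telescoping decomposition
$$
U_{q^{(1)}} - U_{q^{(2)}} = (\Phi_1^{-1}-\Phi_2^{-1})V_1\Phi_1 + \Phi_2^{-1}(V_1-V_2)\Phi_1 + \Phi_2^{-1}V_2(\Phi_1-\Phi_2),
$$
it suffices to bound each piece in $\mathcal{L}(H^\sigma, H^{\sigma-1})$ using the uniform boundedness of $\Phi_j(t)^{\pm 1}$ from Lemma \ref{lem-stabH1} and of $V_j(t,s)$ from Proposition \ref{moreregprop2} to absorb the factors that remain in $H^\sigma$.

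For the middle term, I would reproduce the time-slicing argument of Proposition \ref{lem-cont} in the $V$-variables. Writing $V_j(t,s)$ as the $n\to\infty$ limit of
$$
V_j(n,t,s) = e^{-i(t-K/n)H_{N,j}(K/n)}\prod_{k=J}^{K-1} e^{-iH_{N,j}(k/n)/n}\, e^{-i(J/n-s)H_{N,j}((J-1)/n)},
$$
I would first prove the analog of Lemma \ref{lemn}, namely the uniform-in-$n$ boundedness of $V_j(n,t,s)$ in $\mathcal{L}(H^\sigma)$, by telescoping $H_{N,j}(k/n)^m H_{N,j}((k-1)/n)^{-m}$ and using the bounded-variation bound \eqref{crucbound}. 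On each slice we have the standard estimate
$$
\|e^{-i\tau H_{N,1}(k/n)} - e^{-i\tau H_{N,2}(k/n)}\|_{H^\sigma \to H^{\sigma-1}} \leq 3\tau\, \|H_{N,1}(k/n) - H_{N,2}(k/n)\|_{H^\sigma \to H^{\sigma-1}},
$$
and Proposition \ref{prop-qdep} bounds the right hand side by $C\tau \sup_k(1+T)\|\dot q_k^{(1)} - \dot q_k^{(2)}\|_{L^\infty}$. Telescoping along the $n$ slices as in the one-nucleus case and letting $n\to\infty$ yields the desired estimate on $V_1(t,s) - V_2(t,s)$, proportional to $|t-s|\sup_k(1+T)\|\dot q_k^{(1)}-\dot q_k^{(2)}\|_{L^\infty}$.

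For the boundary pieces, since $q_k^{(1)}(0) = q_k^{(2)}(0) = a_k$, one has
$$
\phi_1(t,x) - \phi_2(t,x) = \sum_k \zeta\!\Big(\frac{|x-a_k|}{\varepsilon_0}\Big)\,(q_k^{(1)}(t) - q_k^{(2)}(t)),
$$
which is pointwise $\lesssim T\sup_k\|\dot q_k^{(1)} - \dot q_k^{(2)}\|_{L^\infty}$, and the Jacobians differ by the same order. The duality argument used for $I_q$ in the proof of Proposition \ref{lem-cont} then adapts verbatim (differentiating the pairing $\int\bar v\,\Phi_q u$ in $q$ produces a factor $\nabla u$, giving the loss of one derivative) to produce $\|\Phi_1(t) - \Phi_2(t)\|_{H^\sigma \to H^{\sigma-1}} \lesssim T\sup_k\|\dot q_k^{(1)} - \dot q_k^{(2)}\|_{L^\infty}$, and the analogous bound for $\Phi_1(t)^{-1} - \Phi_2(t)^{-1}$ follows from the $H^\sigma$-boundedness of $\Phi_j(t)^{-1}$.

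The main technical obstacle is the first step, the uniform $H^\sigma$ boundedness of the sliced $V_j(n,t,s)$. In contrast to $H_1(t)$ in the single-nucleus case, $H_N(t)$ is no longer a small $L^\infty$ perturbation of the static multi-center Dirac--Coulomb operator: it also contains the transported derivative $i\partial_t\phi\cdot\nabla$ and the correction terms $P_t$ and $R(t,x)$. The telescoping product $\prod_k H_{N,j}(k/n)^m H_{N,j}((k-1)/n)^{-m}$ is therefore controlled by $\exp\big(C\sup_k(\|\ddot q_k\|_{L^1}+T\|\dot q_k\|_{L^\infty})\big)$ instead of just $\exp(C\|\ddot q\|_{L^1})$, so one must verify that \eqref{crucbound} really produces a summable increment in $n$; once this is in place, the remaining estimates reduce to the bookkeeping already described.
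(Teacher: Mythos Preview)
Your proposal is correct and follows essentially the same route as the paper: the paper's proof is a one-paragraph reduction to the single-nucleus argument of Proposition~\ref{lem-cont}, noting only that the extra ingredient in the multi-nucleus case is the estimate $\|\Phi_1(t) - \Phi_2(t)\|_{H^{\sigma}\to H^{\sigma-1}} \lesssim T\sup_k \|\dot q_k^{(1)}-\dot q_k^{(2)}\|_{L^\infty}$. You have simply unpacked that reduction in full, including the time-slicing for $V_1-V_2$ (which the paper leaves implicit) and the bounded-variation control via \eqref{crucbound}, so there is nothing to correct.
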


\begin{proof} What we have to add with regard to the single nucleus case (i.e. Lemma \ref{lem-cont}), is that we also have

$$
\|\Phi_1(t) - \Phi_2(t)\|_{H^{\sigma}\rightarrow H^{\sigma-1}} \lesssim \sup_{k} T \|(\dot{q_k}^{(1)})- (\dot{q_k}^{(2)})\|_{L^\infty}.
$$ 
such that we can bound the difference between the operators due to the different changes of variables.
\end{proof}

Again, putting together Propositions \ref{moreregprop2} and \ref{cor-contUsev} we obtain Theorem \ref{teo1} in the multi-nuclei case, so its proof is concluded.

\section{Local well-posedness of the electron-nuclei dynamics}\label{proof}

This section is devoted to the proof of our main results, Theorem \ref{teo2},\ref{teo3}. In all this section, as stated in the hypotheses of the cited Theorems, we assume that the $q_k$ satisfy the separation assumption and that $|Z_k| < \frac{\sqrt 3}{2}$ for all $k$. In all the results, the constants may depend on $Z_k$ and $\varepsilon_0$. 
 
\subsection{Nonlinear estimates}

In this subsection we collect some preliminary estimates that will be needed in the sequel; we will include some proofs for the sake of completeness. We start by recalling this classical version of generalized Hardy inequalities (see e.g. Theorem 2.57 in \cite{bah})

We now provide some standard estimates for the convolution term.

\begin{lemma}\label{lem-multiL2} Let $u,v,w \in  H^1$. Then the following estimates hold
$$
\|(uv * |x|^{-1})w\|_{L^2} \lesssim \|u\|_{L^2}\|v\|_{ H^1}\|w\|_{L^2},
$$
$$
\|(uv * |x|^{-1})w\|_{ H^1} \lesssim \|u\|_{ H^1}\|v\|_{ H^1}\|w\|_{H^1} .
$$
\end{lemma}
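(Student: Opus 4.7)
The plan is to reduce both inequalities to a single $L^\infty$ bound on the convolution $uv * |x|^{-1}$, obtained by combining Cauchy–Schwarz with the classical Hardy inequality, and then to handle the $H^1$ estimate by distributing the gradient via Leibniz and re-applying the $L^2$ bound to each piece (exploiting the fact that the convolution is symmetric in its two arguments).

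First I would establish the pointwise-in-$y$ estimate
\[
|(uv*|x|^{-1})(y)| = \Bigl|\int_{\R^3}\frac{u(x)v(x)}{|x-y|}\,dx\Bigr| \leq \|u\|_{L^2}\,\Bigl\|\frac{v}{|\cdot-y|}\Bigr\|_{L^2}
\leq C\,\|u\|_{L^2}\|v\|_{H^1},
\]
where the first inequality is Cauchy–Schwarz and the second is the translated Hardy inequality (Proposition \ref{genhardy} with $\sigma=1$) applied to $v(\cdot+y)$. Taking the supremum over $y$ gives
\[
\|uv*|x|^{-1}\|_{L^\infty} \leq C\,\|u\|_{L^2}\|v\|_{H^1},
\]
and multiplying by $w$ and taking $L^2$ norms immediately yields the first stated estimate. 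Note that by the symmetry $uv=vu$ one also has the equivalent bound $\|uv*|x|^{-1}\|_{L^\infty}\leq C\|u\|_{H^1}\|v\|_{L^2}$, which will be used below.

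For the $H^1$ estimate, the $L^2$ part of the norm is already controlled by the first inequality (bounding $\|u\|_{L^2}\leq\|u\|_{H^1}$ etc.). For the gradient, Leibniz gives
\[
\nabla\bigl[(uv*|x|^{-1})w\bigr] = \bigl[(\nabla u)\,v * |x|^{-1}\bigr]w + \bigl[u\,(\nabla v) * |x|^{-1}\bigr]w + (uv*|x|^{-1})\,\nabla w,
\]
where the gradient commutes with the convolution against the fixed kernel. I would estimate the three terms by reapplying the first inequality: for the first term with $(\nabla u, v, w)$ in place of $(u,v,w)$, obtaining $\|\nabla u\|_{L^2}\|v\|_{H^1}\|w\|_{L^2}$; for the second using the symmetric version with $(u,\nabla v, w)$, obtaining $\|u\|_{H^1}\|\nabla v\|_{L^2}\|w\|_{L^2}$; and for the third directly with $(u,v,\nabla w)$, obtaining $\|u\|_{L^2}\|v\|_{H^1}\|\nabla w\|_{L^2}$. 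Summing and using $\|\cdot\|_{L^2}+\|\nabla\cdot\|_{L^2}\lesssim\|\cdot\|_{H^1}$ gives the second estimate.

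No real obstacle is expected; the only mildly delicate point is keeping track of which slot ($u$ or $v$) should sit in $L^2$ versus $H^1$ after the Leibniz rule, which is why invoking the symmetry of the convolution in the two factors is the key bookkeeping device.
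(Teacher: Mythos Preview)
Your proposal is correct and follows essentially the same route as the paper: both obtain the $L^\infty$ bound on $uv*|x|^{-1}$ via Cauchy--Schwarz plus Hardy, then distribute the gradient by Leibniz and re-apply the $L^2$ estimate to each of the three resulting terms. The only cosmetic difference is that the paper writes the Hardy step after a change of variable $u_x(y)=u(x-y)$ rather than as a translated Hardy inequality, and handles the second Leibniz term by the same symmetry you invoke.
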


\begin{proof} The proof of the first inequality is a combination of H\"older's and Hardy's inequalities. Indeed,
$$
\|(uv * |x|^{-1})w\|_{L^2} \leq \|(uv * |x|^{-1})\|_{L^\infty} \|w\|_{L^2}
$$
and for all $x$
$$
\Big| \int (uv)(x-y) |y|^{-1}dy \Big| \leq \|u_x\|_{L^2} \|v_x|y|^{-1}\|_{L^2}
$$
where $u_x(y) = u(x-y)$. By a change of variable, $\|u_x\|_{L^2} = \|u\|_{L^2}$ and by Hardy's inequality $\|v_x|y|^{-1}\|_{L^2} \leq 4 \|\nabla v_x\|_{L^2}$. Given that $\nabla v_x = -(\nabla v)_x$, we get the result.

For the second inequality, we write 
$$
\nabla ((uv * |x|^{-1})w) = ((\nabla u) v * |x|^{-1})w + ((u\nabla v)*|x|^{-1}) w + ((uv)*|x|^{-1})\nabla w.
$$
By using the first inequality, we can estimate the $L^2$ norm of the right hand side as follows
\begin{eqnarray*}
\| ((\nabla u) v * |x|^{-1})w \|_{L^2} &\leq &  \|\nabla u\|_{L^2} \|v\|_{\dot H^1} \|w\|_{L^2} \\
\|((u\nabla v)*|x|^{-1}) w\|_{L^2} & \leq & \|u\|_{\dot H^1} \|\nabla v\|_{L^2}\|w\|_{L^2} \\
\|((uv)*|x|^{-1})\nabla w \|_{L^2} & \leq &  \|u\|_{L^2}\|v\|_{\dot H^1}\|\nabla w\|_{L^2}
\end{eqnarray*}
which give the first estimate. 

The estimate in the $H^2$ case can be obtained in the same way as above by using the Laplacian instead of the gradient.

\end{proof}

In what follows we will also need the following fractional versions of the estimates above.
\begin{lemma}\label{lem-multiL2bis}
Let $s\in(0,1/2)$ and $u$, $v$, $w\in H^{s+1}$. Then the following estimate holds
$$
\| (uv * |x|^{-1})w\|_{H^{s+1}}\leq\| u\|_{H^{s+1}}\| v\|_{H^{s+1}}\| w\|_{H^{s+1}}.
$$
\end{lemma}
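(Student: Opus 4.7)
The plan is to extend the integer-order estimates to the fractional range by multilinear complex interpolation on the Bessel potential scale. Since $s+1 \in (1, 3/2) \subset [1,2]$, it suffices to prove that the trilinear operator $T: (u,v,w) \mapsto (uv * |x|^{-1})w$ is bounded from $(H^\sigma)^3$ into $H^\sigma$ for both $\sigma = 1$ and $\sigma = 2$, and then to interpolate. The case $\sigma = 1$ is given by the second inequality of Lemma~\ref{lem-multiL2}.

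For the $\sigma = 2$ endpoint, indicated parenthetically at the end of the proof of Lemma~\ref{lem-multiL2}, I would expand
\[
\Delta\bigl((uv * |x|^{-1})w\bigr) = \bigl(\Delta(uv) * |x|^{-1}\bigr) w + 2\bigl(\nabla(uv) * |x|^{-1}\bigr) \cdot \nabla w + (uv * |x|^{-1})\Delta w,
\]
use Leibniz to distribute the derivatives onto $u$ and $v$ inside the convolution, and estimate each resulting piece via the pointwise Hardy bound $|((fg) * |x|^{-1})(x)| \lesssim \|f\|_{L^2} \|g\|_{\dot H^1}$ already proved in Lemma~\ref{lem-multiL2}, combined with H\"older's inequality in the $w$-slot. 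In every term at most two derivatives are spread over the three factors, and the kernel $|x|^{-1}$ itself is never differentiated, so the whole expression is controlled by $\|u\|_{H^2}\|v\|_{H^2}\|w\|_{H^2}$.

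With the endpoints at $\sigma = 1$ and $\sigma = 2$ in hand, multilinear complex interpolation together with the standard identification $(H^1, H^2)_{[\theta]} = H^{1+\theta}$ for Bessel potential spaces yields $\|T(u,v,w)\|_{H^\sigma} \lesssim \|u\|_{H^\sigma}\|v\|_{H^\sigma}\|w\|_{H^\sigma}$ for every $\sigma \in [1,2]$, and in particular for $\sigma = s+1$ with $s \in (0, 1/2)$. The main (indeed the only) non-trivial step is the $H^2$ endpoint; the interpolation step itself is formal, and no difficulty arises from the Coulomb kernel since every derivative can be transferred onto one of $u,v,w$ and Hardy's inequality uniformly absorbs the resulting $|x|^{-1}$-weight.
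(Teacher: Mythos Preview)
Your proposal is correct and takes a cleaner route than the paper. The paper works directly at the level $H^{s+1}$ by writing the $\dot H^s$ seminorm in its Gagliardo double-integral form, splitting the integral over $|x-y|\geq 1$ and $|x-y|<1$, and invoking the generalized Hardy inequality (Proposition~\ref{genhardy}) with exponents $s$ and $1+s$; this last step is precisely where the restriction $s<\tfrac12$ (so that $1+s<\tfrac32$) enters the argument. Your route instead pins down the two integer endpoints $\sigma=1$ (already in Lemma~\ref{lem-multiL2}) and $\sigma=2$ (the $H^2$ estimate mentioned parenthetically at the end of that lemma, whose details you correctly sketch), and then interpolates the trilinear map along the Bessel-potential scale $[H^1,H^2]_\theta=H^{1+\theta}$. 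Both approaches are valid; yours is shorter, avoids the Gagliardo calculus altogether, and in fact delivers the estimate on the full range $\sigma\in[1,2]$ rather than only on $(1,\tfrac32)$. The paper itself concedes in the Remark following the lemma that its estimates are not sharp; your interpolation argument makes this explicit.
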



\begin{remark}
As it will be clear from the proof, these estimates are not sharp, but we prefer to present them in this clear way as they will be enough for the scope of this paper.
\end{remark}

\begin{proof}
We start by the case $s\in(0,1/2)$; we deal with the homogeneous Sobolev norm, as this is the problematic term. We write
$$
\| (uv * |x|^{-1})w\|_{\dot{H}^{s+1}}=\| (\nabla(uv) * |x|^{-1})w\|_{\dot{H}^{s}} + \| (uv)*|x|^{-1} \grad w \|_{\dot H^s} .
$$
For $\| (uv)*|x|^{-1} \grad w \|_{\dot H^s}$, we use Leibniz's inequalities to get
$$
\| (uv)*|x|^{-1} \grad w \|_{\dot H^s} \lesssim \|(uv)*|x|^{-1} \|_{ W^{s,\infty}} \|\grad w\|_{H^s} ,
$$
Since $\|\grad w\|_{H^s} \leq \|w\|_{H^{s+1}}$ and $\|(uv)*|x|^{-1} \|_{ W^{s,\infty}} \leq \|(uv)*|x|^{-1} \|_{ W^{1,\infty}}$ and since we have already proved
$$
\|(uv)*|x|^{-1} \|_{ W^{1,\infty}} \lesssim \|u\|_{H^1}\|v\|_{H^1}
$$
in the previous lemma, we get
$$
\| (uv)*|x|^{-1} \grad w \|_{\dot H^s} \lesssim  \|u\|_{H^{s+1}}\|v\|_{H^{s+1}}\|w\|_{H^{s+1}}.
$$

For $\| (\nabla(uv) * |x|^{-1})w\|_{\dot{H}^{s}}$, we deal with one of the two terms appearing after using Leibniz rule on the gradient, the other one being analogous: we have, setting $G=(\nabla u)v * |x|^{-1}$
$$
\| Gw\|_{\dot{H}^{s}}^2=\int_{\R^3\times\R^3}\frac{|G(x)w(x)-G(y)w(y)|^2}{|x-y|^{3+2s}}dxdy\leq
$$
$$
\int_{\R^3\times\R^3}\frac{|G(x)-G(y)|^2|w(x)|^2}{|x-y|^{3+2s}}dxdy+
\int_{\R^3\times\R^3}\frac{|G(y)|^2|w(x)-w(y)|^2}{|x-y|^{3+2s}}dxdy=I+II.
$$
The term $II$ can be directly dealt with by estimating as
$$
II\lesssim \|G\|_{L^\infty}^2\|w\|_{\dot{H}^{s+1}}^2
$$ 
and $\|G\|_{L^\infty}$ has been dealt with in the previous Lemma. For the term $I$ we write instead
$$
I=\int_{\R^3}dx\int_{|x-y|\geq1}\frac{|G(x)-G(y)|^2|w(x)|^2}{|x-y|^{3+2s}}dy+
\int_{\R^3}dx\int_{|x-y|<1}\frac{|G(x)-G(y)|^2|w(x)|^2}{|x-y|^{3+2s}}dy=I_1+I_2.
$$
For the term $I_1$ we get
$$
I_1\leq\int_{\R^3}dx|w(x)|^2\int_{|x-y|\geq1}\frac{2\|G\|_{L^\infty}^2}{|x-y|^{3+2s}}\lesssim \|G\|_{L^\infty}^2\|w\|_{L^2}^2.
$$
as, since $3+2s>3$, which is the dimension, the term $|x-y|^{-3-2s}$ is integrable in the region $|x-y|\geq1$. For the term $I_2$ we write
$$
I_2=\int_{\R^3}dx |w(x)|^2\int_{|x-y|<1} \frac{dy}{|x-y|^{3+2s}}\left|\int_{\R^3}(\nabla u(z))v(z)(|x-z|^{-1}-|y-z|^{-1})dz\right|^2
$$
Notice that one has the estimate, for any $s\in(0,1/2)$, 
$$
(|x-z|^{-1}-|y-z|^{-1})\lesssim \frac{|x-y|^{2s}}{|x-z|^{1+2s}} +  \frac{|x-y|^{2s}}{|y-z|^{1+2s}}.
$$
If $|x-z| \leq |y-z|$, this can be obtained by interpolating the obvious inequality $|(|x-z|^{-1}-|y-z|^{-1})|\leq|x-z|^{-1}$ with the inequality
$$
|(|x-z|^{-1}-|y-z|^{-1})|\leq \frac{|y-x|}{|x-z|^2} .
$$
This is due to
$$
|(|x-z|^{-1}-|y-z|^{-1})|=\frac{|z-y|-|x-z|}{|x-z||x-y|}\leq \frac{|y-x|}{|x-z|^2}
$$
where we have only used triangle inequality. We deal with the case $|z-y| \leq |x-z|$ in the same way

Therefore, we have
$$
I_2\lesssim \int_{\R^3}dx |w(x)|^2\int_{|x-y|<1} \frac{dy}{|x-y|^{3-2s}}\Big( \left|\int_{\R^3}\frac{\nabla u(z)}{|x-z|^s}\frac{v(z)}{|x-z|^{1+s}}dz\right|^2 + \left|\int_{\R^3}\frac{\nabla u(z)}{|y-z|^s}\frac{v(z)}{|y-z|^{s+1}}dz\right|^2 \Big)
$$
$$
\lesssim\|u\|_{H^{1+s}}^2\|v\|_{H^{1+s}}^2\int_{\R^3}dx |w(x)|^2\int_{|x-y|<1} \frac{dy}{|x-y|^{3-2s}}
$$
where we have used again \eqref{genhardy} with $s$ and $1+s < \frac32$. As $s>0$, the integral in $dy$ is finite and then we eventually get
$$
I_2\lesssim \|u\|_{H^{s+1}}^2\|v\|_{H^{s+1}}^2\|w\|_{L^2}^2
$$
and this concludes the proof in the first case.

\end{proof}

\subsection{Contraction for $u$ with fixed $q$}

In this subsection, we assume that $C_1$ is the constant defined in Proposition \ref{prop-linH1} and in Proposition \ref{prop-estimH1sn}. We begin by stating a well posedness result for the nonlinear Dirac equation in $H^1$.

\begin{proposition}\label{contrac-u}
Let $\sigma\in[1,3/2)$, and $R>0$.
There exists a constant $C$ such that for all $u_0 \in H^\sigma$, and all $q$ that satisfies
$$
 \sup_{k,t}|\dot q_k(t)|\leq \frac{C_1}{2},\quad \sup_{k} \|\ddot{q}_k\|_{L^1([0,1])} \leq R
$$
the Cauchy problem 
$$
\begin{cases}
\displaystyle
i\partial_t u = (\mathcal{D}+\beta) u + \sum_{k=1}^N \frac{Z_k}{|x-q_k(t)|}u +( |x|^{-1}*|u|^2) u \\
u(0,x) = u_0(x)
\end{cases}
$$
is well posed in $\mathcal C([0,T],H^{\sigma})$ for $T \leq \frac1{C\|u_0\|_{H^{\sigma}}^2},1$ with the additional condition, if $N>1$, that $T \sup_k \|\dot q_k\|_{L^\infty} \leq \frac{C_1}{2}$. Let us denote the corresponding flow by $\Psi_q(t)$. Then $\Psi_q(t)$ is continuous in the initial datum: namely, for each $u_0$, $v_0\in H^1$ we have
$$
\|\Psi_q(t)u_0 - \Psi_q(t)v_0\|_{\mathcal C([0,T],H^{\sigma})} \leq C \|u_0 - v_0\|_{H^{\sigma}}.
$$
\end{proposition}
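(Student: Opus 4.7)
The plan is to run a standard Picard iteration on the Duhamel formula
$$
u(t) = U_q(t,0) u_0 - i \int_0^t U_q(t,s) F(u(s))\, ds, \qquad F(u) = (|u|^2 * |x|^{-1}) u,
$$
where $U_q(t,s)$ is the two-parameter propagator constructed in Theorem \ref{teo1}. The assumption $\sup_{k,t}|\dot q_k| \leq C_1/2$ together with $\sup_k \|\ddot q_k\|_{L^1} \leq R$ (and, in the multi-nuclei case, $T\sup_k\|\dot q_k\|_{L^\infty} \leq C_1/2$) fit inside the hypotheses of Theorem \ref{teo1}, so $U_q \in \mathcal{C}([0,T]^2, \mathcal{L}(H^\sigma))$ with operator norm bounded by a constant $M$ independent of $t,s,q$ (this uniformity in $q$ is the key output of Theorem \ref{teo1} that makes the Newton step in later sections work).

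The contraction will be set up on the complete metric space
$$
X_T = \bigl\{ u \in \mathcal{C}([0,T], H^\sigma) : \|u\|_{\mathcal{C}([0,T], H^\sigma)} \leq 2M \|u_0\|_{H^\sigma} \bigr\},
$$
equipped with the sup-in-time $H^\sigma$ metric. The crucial analytic input is the cubic bound
$$
\|F(u)\|_{H^\sigma} \lesssim \|u\|_{H^\sigma}^3, \qquad \sigma \in [1,\tfrac32),
$$
which for $\sigma = 1$ is covered by Lemma \ref{lem-multiL2} and for $\sigma = 1+s$, $s\in(0,1/2)$, by Lemma \ref{lem-multiL2bis}; together with its trilinear difference version
$$
\|F(u) - F(v)\|_{H^\sigma} \lesssim \bigl(\|u\|_{H^\sigma}^2 + \|v\|_{H^\sigma}^2\bigr)\|u-v\|_{H^\sigma},
$$
obtained by the same proof after writing $F(u) - F(v)$ as a telescoping sum of three terms in which $u$ or $v$ appears linearly. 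Feeding these into the Duhamel formula and using the uniform bound $\|U_q(t,s)\|_{H^\sigma \to H^\sigma}\leq M$ gives
$$
\|\Gamma(u)(t)\|_{H^\sigma} \leq M \|u_0\|_{H^\sigma} + C T \|u\|_{X_T}^3,
$$
$$
\|\Gamma(u) - \Gamma(v)\|_{X_T} \leq C T \bigl(\|u\|_{X_T}^2 + \|v\|_{X_T}^2\bigr)\|u-v\|_{X_T},
$$
so that choosing $T \leq \frac{1}{C'(1+\|u_0\|_{H^\sigma}^2)}$ (capped at $1$) makes $\Gamma : X_T \to X_T$ a contraction. Banach's theorem supplies the unique fixed point $\Psi_q(t)u_0$.

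The continuous dependence on $u_0$ is immediate from the same two estimates: writing $u = \Psi_q(\cdot)u_0$, $v = \Psi_q(\cdot)v_0$, Duhamel gives
$$
\|u(t)-v(t)\|_{H^\sigma} \leq M\|u_0 - v_0\|_{H^\sigma} + CT\bigl(\|u\|_{X_T}^2 + \|v\|_{X_T}^2\bigr)\|u-v\|_{X_T},
$$
and absorbing the right-hand side via the smallness of $T$ yields the Lipschitz bound stated. The only mildly delicate step is the fractional cubic estimate in the regime $\sigma \in (1,3/2)$, which requires the Leibniz rule for fractional derivatives combined with generalized Hardy inequalities; this however is precisely what Lemma \ref{lem-multiL2bis} packages, so the proof reduces to fitting the pieces together and optimizing $T$ in $\|u_0\|_{H^\sigma}$.
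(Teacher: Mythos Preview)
Your proposal is correct and follows essentially the same route as the paper: Duhamel formulation via the propagator $U_q$ from Theorem \ref{teo1}, the cubic $H^\sigma$ estimate on the Hartree nonlinearity from Lemmas \ref{lem-multiL2} and \ref{lem-multiL2bis}, and a Banach fixed point on a ball of radius comparable to $\|u_0\|_{H^\sigma}$ for $T\lesssim \|u_0\|_{H^\sigma}^{-2}$. The continuous dependence argument you give (absorbing the cubic difference term into the left-hand side by smallness of $T$) is exactly the standard one, and the paper leaves it implicit in the contraction.
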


\begin{proof}
Thanks to the assumptions on $q$ and $T$ in the several nuclei case ($T \sup_k \|\dot q_k\|_{L^\infty} \leq \frac{C_1}{2}$), the Cauchy problem admits the following Duhamel formulation for $t<T$ :
$$
u = U_q(t,0)u_0 -i \int_{0}^t U_q(t,\tau) (|x|^{-1}*|u|^2u) d\tau.
$$
Let
$$
A_q(u) = U_q(t,0)u_0 -i \int_{0}^t U_q(t,\tau) (|x|^{-1}*|u|^2)u d\tau.
$$
We prove that $A_q$ admits a fixed point by a contraction argument.

Thanks to the continuity of $U_q$ on $H^\sigma$ (see Proposition \ref{moreregprop2}) we have 
$$
\|A_q(u)\|_{\mathcal C([0,T],H^\sigma)} \leq C \|u_0\|_{H^\sigma} + CT \|(|x|^{-1}*|u|^2)u\|_{\mathcal C([0,T],H^\sigma)}.
$$
And thanks to the bilinear estimates of Lemma \ref{lem-multiL2bis}
$$
\|A_q(u)\|_{\mathcal C([0,T],H^\sigma)} \leq C \|u_0\|_{H^\sigma} + CT \|u\|^3_{\mathcal C([0,T],H^\sigma)}.
$$
Therefore, the ball of $\mathcal C([0,T],H^\sigma)$ of center $0$ and radius $2C\|u_0\|_{H^\sigma}^2$ is stable under $A_q$ for $T \leq \frac1{C'\|u_0\|_{H^\sigma}^2}$.

What is more,
$$
\|A_q(u) - A_q(v)\|_{\mathcal C([0,T],H^\sigma)} \leq CT \|(|x|^{-1}*|u|^2)u - (|x|^{-1}*|v|^2)v\|_{\mathcal C([0,T],H^\sigma)}
$$
and 
$$
(|x|^{-1}*|u|^2)u - (|x|^{-1}*|v|^2)v = (|x|^{-1}*|u|^2)(u-v) + v|x|^{-1}*(\Re \an{u+v,u-v}).
$$
Hence, thanks to Lemma \ref{lem-multiL2}, we get that for $u,v$ in the ball of $\mathcal C([0,T],H^\sigma)$ of center $0$ and radius $2C\|u_0\|_{H^\sigma}^2$,
$$
\|A_q(u) - A_q(v)\|_{\mathcal C([0,T],H^\sigma)} \leq C_2T \|u_0\|_{H^\sigma}^2\|u-v\|_{\mathcal C([0,T],H^\sigma)}
$$
thus $A_q$ is contracting for $T< \frac1{C_2 \|u_0\|_{H^\sigma}^2}$, and this concludes the proof.
\end{proof}

\subsection{Further properties of $\Psi_q$}

\begin{proposition}\label{prop-flow}  
Let $\sigma\in[1,3/2)$ and $R>0$. There exists a constant $C$ such that for all $u_0 \in H^\sigma$ and $q$ as in the previous proposition, for $T \leq \frac1{C(1+\|u_0\|_{H^\sigma}^2)}$ the flow $\Psi_q$ satisfies the following properties:\\
\item{(i)} $$\|\Psi_q(t)u_0\|_{\mathcal C([0,T],H^{\sigma})} \leq C \|u_0\|_{H^{\sigma}}.$$
\item{\text(ii)} $\Psi_q$ is Lipschitz-continuous in $q$, namely
$$
\|\Psi_{q^{(1)}}(t)u_0 - \Psi_{q^{(2)}}(t)u_0\|_{\mathcal C([0,T],H^{\sigma-1})} \leq C T\|u_0\|_{H^{\sigma}}\sup_{k,t} |\dot q_k^{(1)}(t) - \dot q_k^{(2)}(t)|.
$$

\end{proposition}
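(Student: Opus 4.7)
The bound (i) is essentially already contained in the fixed-point construction of Proposition \ref{contrac-u}: for $T\lesssim\|u_0\|_{H^\sigma}^{-2}$ the contraction is set up in the invariant ball of $\mathcal C([0,T],H^\sigma)$ of radius $\sim\|u_0\|_{H^\sigma}$, so its unique fixed point $\Psi_q(t)u_0$ satisfies (i) automatically. All the work lies in (ii).

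For (ii) the plan is to start from the Duhamel formulas, with $F(u)=(|x|^{-1}*|u|^2)u$,
$$\Psi_{q^{(j)}}(t)u_0 = U_{q^{(j)}}(t,0)u_0 - i\int_0^t U_{q^{(j)}}(t,\tau)F(\Psi_{q^{(j)}}(\tau)u_0)\,d\tau,\qquad j=1,2,$$
and to split the difference $\Delta(t):=\Psi_{q^{(1)}}(t)u_0-\Psi_{q^{(2)}}(t)u_0$ into three natural pieces: (a) $[U_{q^{(1)}}(t,0)-U_{q^{(2)}}(t,0)]u_0$; (b) an integral on $[0,t]$ of $[U_{q^{(1)}}(t,\tau)-U_{q^{(2)}}(t,\tau)]F(\Psi_{q^{(1)}}(\tau)u_0)$; (c) an integral of $U_{q^{(2)}}(t,\tau)[F(\Psi_{q^{(1)}}(\tau)u_0)-F(\Psi_{q^{(2)}}(\tau)u_0)]$.

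Terms (a) and (b) are estimated in $H^{\sigma-1}$ by combining Proposition \ref{cor-contUsev}, which gives
$$\|U_{q^{(1)}}(t,\tau)-U_{q^{(2)}}(t,\tau)\|_{H^\sigma\to H^{\sigma-1}}\lesssim T\sup_k\|\dot q_k^{(1)}-\dot q_k^{(2)}\|_{L^\infty},$$
with the nonlinear bound $\|F(\Psi_{q^{(1)}}(\tau)u_0)\|_{H^\sigma}\lesssim \|u_0\|_{H^\sigma}^3$ (consequence of (i) and the trilinear estimates of Lemmas \ref{lem-multiL2}--\ref{lem-multiL2bis}). Since $T\|u_0\|_{H^\sigma}^2\lesssim 1$ by the choice of $T$, these two contributions are altogether bounded by $CT\|u_0\|_{H^\sigma}\sup_k\|\dot q_k^{(1)}-\dot q_k^{(2)}\|_{L^\infty}$, which is already the target. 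For piece (c) I use the uniform boundedness of $U_{q^{(2)}}(t,\tau)$ on $H^{\sigma-1}$ (Proposition \ref{moreregprop2}) to reduce matters to a Lipschitz estimate for the Hartree nonlinearity,
$$\|F(u)-F(v)\|_{H^{\sigma-1}}\leq C\bigl(\|u\|_{H^\sigma}^2+\|v\|_{H^\sigma}^2\bigr)\|u-v\|_{H^{\sigma-1}},$$
obtained via the identity $F(u)-F(v)=(|x|^{-1}*|u|^2)(u-v)+(|x|^{-1}*\mathrm{Re}\langle u+v,u-v\rangle)v$ and the bilinear estimates of Section 3.1. This yields $\|(c)\|_{H^{\sigma-1}}\leq C\|u_0\|_{H^\sigma}^2\int_0^t\|\Delta(\tau)\|_{H^{\sigma-1}}\,d\tau$. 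Summing the three pieces and applying Gronwall's lemma closes the estimate, the smallness of $T\|u_0\|_{H^\sigma}^2$ keeping the exponential factor bounded.

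The main technical obstacle I foresee is the Lipschitz bound for $F$ in $H^{\sigma-1}$ when $\sigma\in(1,\tfrac32)$: the source and target Sobolev indices differ by one, so it is not literally one of the inequalities already recorded in Lemmas \ref{lem-multiL2}--\ref{lem-multiL2bis} but rather a variant, which should nevertheless follow from the same splitting together with generalised Hardy (Proposition \ref{genhardy}), after a mild reallocation of derivatives between the convolution factor and the multiplier; in the endpoint case $\sigma=1$ the $L^2$-version is directly furnished by Lemma \ref{lem-multiL2}.
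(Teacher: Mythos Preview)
Your argument is essentially the paper's: the same three-piece splitting of the Duhamel difference, the same use of Proposition~\ref{cor-contUsev} for pieces (a) and (b). For piece (c) the paper is terser, writing simply ``$III = A_{q^{(2)}}(u_1) - A_{q^{(2)}}(u_2)$ and since $A_{q^{(2)}}$ is contracting, this yields the result''---i.e.\ it absorbs the term directly using a contraction constant $<1$ in $H^{\sigma-1}$ rather than appealing to Gronwall; both manoeuvres rest on exactly the Lipschitz bound for $F$ in $H^{\sigma-1}$ (with $H^\sigma$ coefficients) that you correctly flag at the end as not being literally recorded in Lemmas~\ref{lem-multiL2}--\ref{lem-multiL2bis}.
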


\begin{proof}

The property (i) is a consequence of the contraction argument made in the proof of the previous proposition.

For (ii), write $u_j = \Psi_{q^{(j)}}(t) u_0$, $j=1,2$. Since $u_j$ is the fixed point of $A_{q^{(j)}}$ we get
$$
u_1 - u_2 = A_{q^{(1)}}(u_1) - A_{q^{(2)}}(u_2) = I + II + III
$$
with
\begin{eqnarray*}
I &=& U_{q^{(1)}}(t,0) (u_0) -U_{q^{(2)}}(t,0)(u_0) \\
II &=& -i\int_{0}^t (U_{q^{(1)}}(t,\tau) - U_{q^{(2)}}(t,\tau)) (|x|^{-1} * |u_1(\tau)|^2) u_1(\tau) d\tau\\
III &=& -i\int_{0}^t U_{q^{(2)}}(t,\tau)\Big( (|x|^{-1} * |u_1(\tau)|^2) u_1(\tau) -(|x|^{-1} * |u_2(\tau)|^2)u_2(\tau) \Big) d\tau.
\end{eqnarray*}
From Corollary \ref{cor-contUsev}, we have
$$
\|U_{q^{(1)}}(t,\tau) - U_{q^{(2)}}(t,\tau)\|_{H^{\sigma} \rightarrow H^{\sigma-1}}  \leq C \sup_{k} T \|(\dot{q_k}^{(1)})- (\dot{q_k}^{(2)})\|_{L^\infty}.
$$
Therefore, we get
$$
\|I\|_{\mathcal C([0,T],H^\sigma)} \leq C \sup_{k,t} T |(\dot{q_k}^{(1)})- (\dot{q_k}^{(2)})| \|u_0\|_{H^{\sigma+1}}
$$
and
$$
\|II\|_{\mathcal C([0,T],H^\sigma)} \leq C \sup_{k,t} T |(\dot{q_k}^{(1)})- (\dot{q_k}^{(2)})| \|(|x|^{-1} * |u_1|^2 )u_1 \|_{\mathcal C([0,T],H^{\sigma+1})}.
$$
Since 
$$
T \|(|x|^{-1} * |u_1|^2) u_1 \|_{\mathcal C([0,T],H^{\sigma+1})} \leq C' \|u_0\|_{H^{\sigma+1}},
$$
we get
$$
\|II\|_{\mathcal C([0,T],H^1)} \leq C \sup_{k,t} T |(\dot{q_k}^{(1)})- (\dot{q_k}^{(2)})| \|u_0\|_{H^{\sigma+1}}.
$$
We have 
$$
III = A_{q^{(2)}}(u_1) - A_{q^{(2)}}(u_2)
$$
and since $A_{q^{(2)}}$ is contracting, this yields the result.
\end{proof}

\begin{remark}
Actually, by exploiting some refined version of estimates in Lemma \ref{lem-multiL2bis}, the time of existence $T$ could be made dependent only on the $H^1$ norm of the initial datum $u_0$. Anyway, we prefer to keep the strongest assumption with the dependence on $H^\sigma$ because this simplificates significantly the proof of the Lipschitz-continuity, that is point (ii) above.
\end{remark}

\subsection{A Schauder fixed point for $q$}\par\noindent\vskip5pt

Let $t_0\geq 0$, $(a_k)_k \in (\R^3)^N$ and $(b_k)_k \in (\R^3)^N$. 

Assuming for the moment that it is well defined, we set $P(q)$ as the vector field such that $\ddot{P}(q)_k=F(q)_k$ for every $k$ and where
$$
 F(q)_k = -Z_k \an{\Psi_q(u_0)| \frac{q_k -x}{|q_k - x|^3} |\Psi_q(u_0)} + \sum_{l\neq k} Z_k Z_l \frac{q_k-q_l}{|q_k-q_l|^3}
$$
where the following initial values are given: $P(q)_k(t=t_0)_k = a_k$, $\dot{P}_q(t=t_0)_k = b_k$.

Let $R>0$ and 
\begin{multline*}
B = \{ q\in \mathcal C^2([t_0,t_0+T_1],(\R^3)^N) | T_1\sup_k \|\dot q_k\|_{L^\infty} \leq \frac{C_1}{2}, \\
 \|\dot q_k\|_{L^\infty}\leq \frac{C_1}{2}, \sup_k \|\ddot q_k\|_{L^1}\leq R, q_k(t_0) = a_k, \forall k\neq l, \forall t |q_k - q_l|> 4\varepsilon_0\}
\end{multline*}
where $M_0$, $C_1$ are the constants fixed by Proposition \ref{prop-estimH1sn}.

\begin{proposition}\label{stableprop}
 Assume $u_0\in H^1$, $\sum_k |b_k|\leq \frac{C_1}{4}$ and for all $k\neq l$, $|a_k-a_l|\geq 8\varepsilon_0$. There exists $C$ such that if $T_1\leq \frac1{C\|u_0\|_{H^1}^2}$ for $N=1$ and $T_1\leq \frac1{C(1+\|u_0\|_{H^1}^2)}$ if $N\geq 2$, then $B$ is stable under $P$. 
\end{proposition}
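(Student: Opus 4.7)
\textbf{Proof plan for Proposition \ref{stableprop}.}

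The plan is to verify each of the four defining conditions of $B$ for $P(q)$ by first establishing a uniform $L^\infty$ bound on the force $F(q)_k$ when $q \in B$, and then integrating twice against the given initial data $(a_k, b_k)$. Concretely, write
$$
P(q)_k(t) = a_k + (t-t_0) b_k + \int_{t_0}^t (t-s)\, F(q)_k(s)\, ds,\qquad \dot P(q)_k(t) = b_k + \int_{t_0}^t F(q)_k(s)\, ds,
$$
and control everything in terms of $T_1 \|F(q)\|_{L^\infty}$.

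The first step is the key estimate
$$
\sup_k \|F(q)_k\|_{L^\infty([t_0,t_0+T_1])} \leq C\bigl(1 + \|u_0\|_{H^1}^2\bigr).
$$
For the electron-nucleus piece, Cauchy--Schwarz gives
$$
\bigl| Z_k \langle \Psi_q(u_0)\,|\,\tfrac{q_k-x}{|q_k-x|^3}\,|\,\Psi_q(u_0)\rangle\bigr| \leq |Z_k| \int \frac{|\Psi_q(u_0)|^2}{|x-q_k|^2}\,dx,
$$
which by the generalized Hardy inequality of Proposition \ref{genhardy} with $\sigma = 1$ is controlled by $C \|\Psi_q(u_0)\|_{\dot H^1}^2$, hence by $C\|u_0\|_{H^1}^2$ thanks to Proposition \ref{prop-flow}(i) (valid because $q \in B$ satisfies exactly the hypotheses of Proposition \ref{contrac-u}). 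For the nucleus-nucleus piece, the separation condition in $B$ gives $|q_k(t) - q_l(t)| > 4\varepsilon_0$ for all $t$, so that term is bounded by $\sum_{l\neq k} |Z_k Z_l|/(4\varepsilon_0)^2 \lesssim 1_{N\geq 2}\,\varepsilon_0^{-2}$. Continuity of $F(q)_k$ in $t$ (hence $P(q)\in \mathcal C^2$) follows from the continuity of $\Psi_q(u_0)$ in $H^1$ together with the separation bound on the trajectories.

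With the uniform bound on $\|F(q)\|_{L^\infty}$ in hand, the remaining three conditions follow by choosing $T_1$ small. For the $L^1$-bound on $\ddot P(q)_k = F(q)_k$ we have $\|\ddot P(q)_k\|_{L^1} \leq T_1 \|F(q)_k\|_{L^\infty} \leq C T_1(1+\|u_0\|_{H^1}^2)$, which is $\leq R$ provided $T_1 \leq R/(C(1+\|u_0\|_{H^1}^2))$. For the velocity bound, $\|\dot P(q)_k\|_{L^\infty} \leq |b_k| + T_1\|F(q)_k\|_{L^\infty} \leq C_1/4 + C T_1(1+\|u_0\|_{H^1}^2) \leq C_1/2$ under the same smallness on $T_1$; and $T_1 \leq 1$ then forces $T_1 \sup_k\|\dot P(q)_k\|_{L^\infty} \leq C_1/2$ as well. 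Finally, for the separation of $P(q)_k$ and $P(q)_l$ we estimate
$$
|P(q)_k(t) - P(q)_l(t)| \geq |a_k - a_l| - T_1 |b_k - b_l| - \tfrac{T_1^2}{2}\,2\|F(q)\|_{L^\infty} \geq 8\varepsilon_0 - \tfrac{C_1}{4}T_1 - CT_1^2(1+\|u_0\|_{H^1}^2),
$$
which exceeds $4\varepsilon_0$ once $T_1$ is taken small depending on $\varepsilon_0$, $C_1$, and $\|u_0\|_{H^1}$. Collecting these threshold conditions on $T_1$ yields a single bound of the form stated in the proposition, with the $N=1$ case being easier since the inter-nuclear force and the separation check are vacuous.

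The only real subtlety is the first step, namely extracting the uniform bound on $\|F(q)\|_{L^\infty}$: it requires both the $H^1$-control on $\Psi_q(u_0)$ supplied by Proposition \ref{prop-flow}(i) (which in turn relies on $q \in B$ so that the propagator and contraction of Proposition \ref{contrac-u} apply) and the separation condition encoded in $B$ to bound the Coulomb repulsion. Everything else is a direct integration argument, with $T_1$ chosen so that all perturbations of the initial data remain under the stated thresholds.
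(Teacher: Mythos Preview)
Your proposal is correct and follows essentially the same route as the paper: bound $\|F(q)_k\|_{L^\infty}$ via Hardy's inequality on the electron term and the separation condition on the inter-nuclear term, then integrate to recover each constraint defining $B$. The only cosmetic differences are that you write out the double-integral formula for $P(q)_k$ explicitly and handle the separation check directly from that formula, whereas the paper uses the already-established bound $\|\dot P(q)_k\|_{L^\infty}\le C_1/2$ to control $|P(q)_k - a_k|$; you also remark on the $\mathcal C^2$ regularity of $P(q)$, which the paper leaves implicit.
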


\begin{proof}
We have 
$$
\|F(q)_k\|_{L^\infty} \leq C |Z_k|\|\Psi_q(t)u_0\|_{H^1}^2 +C \varepsilon_0^{-2} \sum_{k\neq l} |Z_kZ_l |.
$$
Hence
$$
\| F(q)_k\|_{L^1}\leq T_1 C\Big( \|u_0\|_{H^1}^2 + 1_{N\geq 2}\Big).
$$
This yields 
$$
\|\dot{P}(q)_k\|_{L^\infty} \leq |b_k|+ T_1 C\Big( \|u_0\|_{H^1}^2 + 1_{N\geq 2}\Big)
$$
hence for $T_1 \leq C_2  C^{-1}\Big( \|u_0\|_{H^1}^2 + 1_{N\geq 2}\Big)^{-1}$ and $T_1\leq N^{-1}\frac{C_1}{4}C^{-1}\Big( \|u_0\|_{H^1}^2 + 1_{N\geq 2}\Big)^{-1}$, we have 
$$
\| F(q)_k\|_{L^1}\leq C_2
$$
and
$$
 \|\dot{P}(q)_k\|_{L^\infty} \leq \frac{C_1}{2}.
$$
For $N\geq 2$, we have
$$
T_1 \|\dot{P}(q)_k\|_{L^\infty} \leq T_1 \frac{C_1}{2}
$$
hence for $T_1 \leq 1$, we have 
$$
T_1 \|\dot{P}(q)_k\|_{L^\infty} \leq \frac{C_1}{2}.
$$
Finally, for $k\neq l$, we have 
$$
|q_k - q_l|\geq |a_k - a_k| - |q_k -a_k| - |q_l-a_l|
$$
and since $|q_l - a_l|\leq C_1 T_1$, we get that for $T_1 \leq \frac{2\varepsilon_0}{C_1}$,
$$
|q_k -q_l|\geq 4\varepsilon_0.
$$

Hence $B$ is stable under $P$.
\end{proof}

In the next proposition, we give the key properties of the map $P$, that is we prove that it is H\"older continuous if $u_0\in H^\sigma$ for $\sigma\in(1,\frac32)$. The threshold is a consequence of the threshold for the validity of Hardy inequality \eqref{genhardy}, that will play a key role in the proof.

\begin{proposition}\label{regp}
Let $q^{(j)} \in B$ for $j=1,2$. Then
if $\sigma\in(1,3/2)$ there exists $C$ and $\tilde{T}\leq T_1$ with $T_1$ as in Proposition \ref{stableprop} such that for all $u_0 \in H^\sigma$ we have
\begin{equation}\label{hold}
\sup_k \|P(q^{(1)})_k - P(q^{(2)})_k\|_{\mathcal C^1([t_0,t_0+\tilde T])} \leq C \tilde T^{2s}(1_{N\geq 2}+ \|u_0\|_{H^\sigma}^2) \sup_k \|\dot q^{(1)}_k - \dot q^{(2)}_k\|_{L^\infty}^{2s-2}.
\end{equation}
\end{proposition}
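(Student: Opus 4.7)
The strategy is to integrate the Newton equation twice. Since $P(q^{(1)})_k$ and $P(q^{(2)})_k$ share the initial data $(a_k,b_k)$, I would write
\[
(P(q^{(1)})_k - P(q^{(2)})_k)(t) = \int_{t_0}^t (t-s)\bigl(F(q^{(1)})_k(s) - F(q^{(2)})_k(s)\bigr)\,ds,
\]
reducing the $\mathcal C^1$-control of the left-hand side to an $L^\infty_t$-bound on $F(q^{(1)}) - F(q^{(2)})$, up to a factor of $\tilde T$. Then I would split $F = F_{nn} + F_{en}$ into the nucleus-nucleus Coulomb piece (only when $N\geq 2$) and the electron-nucleus expectation. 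By the no-collision condition $|q_k-q_l|\geq 4\varepsilon_0$, the map $F_{nn}$ is smooth in $q$, giving a linear bound
\[
|F_{nn}(q^{(1)}) - F_{nn}(q^{(2)})|\lesssim_{Z,\varepsilon_0}\tilde T\sup_k\|\dot q_k^{(1)}-\dot q_k^{(2)}\|_{L^\infty},
\]
which will eventually be absorbed into the Hölder contribution coming from $F_{en}$.

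Setting $u:=\Psi_{q^{(1)}}(t)u_0$ and $v:=\Psi_{q^{(2)}}(t)u_0$, I would split
\[
F_{en}(q^{(1)})_k - F_{en}(q^{(2)})_k = -Z_k(I_1 + I_2),
\]
with $I_1$ the ``kernel-change'' piece (same state $u$, different centres $q_k^{(j)}$) and $I_2$ the ``state-change'' piece (same centre $q_k^{(2)}$, different densities $|u|^2,|v|^2$). For $I_1$ I would interpolate the two pointwise bounds on the kernel difference, the triangle bound $|\cdot|^{-2}$ and the mean-value bound $|q_k^{(1)}-q_k^{(2)}|\,|\cdot|^{-3}$, at parameter $2(\sigma-1)\in(0,1)$, obtaining
\[
\Bigl|\tfrac{q_k^{(1)}-x}{|q_k^{(1)}-x|^3} - \tfrac{q_k^{(2)}-x}{|q_k^{(2)}-x|^3}\Bigr|\lesssim |q_k^{(1)}-q_k^{(2)}|^{2(\sigma-1)}\Bigl(\tfrac{1}{|x-q_k^{(1)}|^{2\sigma}}+\tfrac{1}{|x-q_k^{(2)}|^{2\sigma}}\Bigr).
\]
Applying the generalised Hardy inequality (Proposition \ref{genhardy}), which is available since $\sigma<3/2$, gives $|I_1|\lesssim |q_k^{(1)}-q_k^{(2)}|^{2(\sigma-1)}\|u\|_{\dot H^\sigma}^2$; combining with $|q_k^{(1)}-q_k^{(2)}|\leq\tilde T\sup_k\|\dot q_k^{(1)}-\dot q_k^{(2)}\|_{L^\infty}$ and Proposition \ref{prop-flow}(i) yields
\[
|I_1|\lesssim \tilde T^{2(\sigma-1)}\|u_0\|_{H^\sigma}^2 \sup_k\|\dot q_k^{(1)}-\dot q_k^{(2)}\|_{L^\infty}^{2(\sigma-1)}.
\]

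For $I_2$ I would rewrite $|u|^2 - |v|^2 = \mathrm{Re}\langle u+v,u-v\rangle_{\mathbb C^4}$ and apply Cauchy-Schwarz with Hardy's inequality at the split exponents $\sigma$ and $2-\sigma$ (both in $(0,3/2)$ on our range of $\sigma$):
\[
|I_2|\lesssim \|u+v\|_{\dot H^\sigma}\|u-v\|_{\dot H^{2-\sigma}}.
\]
Since $\sigma-1\leq 2-\sigma\leq\sigma$ for $\sigma\in[1,3/2]$, Sobolev interpolation gives
\[
\|u-v\|_{H^{2-\sigma}}\lesssim \|u-v\|_{H^{\sigma-1}}^{2(\sigma-1)}\|u-v\|_{H^\sigma}^{3-2\sigma},
\]
and Proposition \ref{prop-flow}(ii) controls $\|u-v\|_{H^{\sigma-1}}$ by $\tilde T\|u_0\|_{H^\sigma}\sup_k\|\dot q_k^{(1)}-\dot q_k^{(2)}\|_{L^\infty}$, while Proposition \ref{prop-flow}(i) plus the triangle inequality gives $\|u-v\|_{H^\sigma}\lesssim \|u_0\|_{H^\sigma}$. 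This produces the same Hölder bound
\[
|I_2|\lesssim \tilde T^{2(\sigma-1)}\|u_0\|_{H^\sigma}^2 \sup_k\|\dot q_k^{(1)}-\dot q_k^{(2)}\|_{L^\infty}^{2(\sigma-1)}.
\]
Summing the three contributions, multiplying by $\tilde T$, choosing $\tilde T$ small and using the a priori bound $\sup_k\|\dot q_k^{(j)}\|_{L^\infty}\leq C_1/2$ on $B$ to absorb the linear $F_{nn}$-term into the Hölder one then closes the estimate.

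The hard part will be the estimate on $I_2$: a naive Cauchy-Schwarz asks for $u-v\in \dot H^a$ and $u+v\in \dot H^{2-a}$ with $a\leq\sigma-1$ and $2-a\leq\sigma$, a combination which is empty for $\sigma<3/2$. The only way to close the bound is the Sobolev interpolation above, which trades a fractional power of regularity of $u-v$ (where we only have the rough $H^\sigma$ bound) against the smallness-in-$q$ provided by Proposition \ref{prop-flow}(ii) in $H^{\sigma-1}$. The Hölder exponent $2(\sigma-1)$ degenerates at both endpoints $\sigma=1$ and $\sigma=3/2$, which is consistent with the structural threshold emphasised in Remark \ref{structs}.
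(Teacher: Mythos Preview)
Your proposal is correct and follows essentially the same route as the paper: the same splitting into nucleus--nucleus and electron--nucleus contributions, the same interpolated pointwise bound on the kernel difference for the ``centre-change'' piece, and the same interpolation idea for the ``state-change'' piece to extract a factor $\|\Psi_{q^{(1)}}u_0-\Psi_{q^{(2)}}u_0\|_{H^{\sigma-1}}^{2(\sigma-1)}$. The only technical variation is in this last step: the paper reaches it via a three-way H\"older inequality directly at the integral level (splitting $|x-q_k|^{-2}$ into three weighted pieces and writing $|u-v|=|u-v|^a|u-v|^{1-a}$ with $a=2(\sigma-1)$), whereas you first apply Cauchy--Schwarz with Hardy at the pair of exponents $(\sigma,2-\sigma)$ and then Sobolev-interpolate $\|u-v\|_{H^{2-\sigma}}$ between $H^{\sigma-1}$ and $H^\sigma$. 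Both yield the same bound with the same H\"older exponent; your version is arguably a bit more transparent in separating the roles of Hardy and interpolation.
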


\begin{proof} 
First of all we write

\begin{multline*}
 F(q^{(1)})_k -  F(q^{(2)})_k = -Z_k \an{\Psi_{q^{(1)}}(u_0)|\frac{q_k^{(1)}-x}{|q_k^{(1)}-x|^3} |\Psi_{q^{(1)}}(u_0)}\\
+Z_k \an{\Psi_{q^{(2)}}(u_0)|\frac{q_k^{(2)}-x}{|q_k^{(2)}-x|^3}| \Psi_{q^{(2)}}(u_0)}\\
+ 2\sum_{k\neq l} Z_kZ_l \Big(\frac{q_k^{(1)}-q_l^{(1)}}{|q_k^{(1)}-q_l^{(1)}|^3} -\frac{q_k^{(2)}-q_l^{(2)}}{|q_k^{(2)}-q_l^{(2)}|^3}\Big).
\end{multline*}

Notice that, since $|q_k^{(j)}-q_l^{(j)}|\geq 2\varepsilon_0$, we have 
$$
\Big| \frac{q_k^{(1)}-q_l^{(1)}}{|q_k^{(1)}-q_l^{(1)}|^3} -\frac{q_k^{(2)}-q_l^{(2)}}{|q_k^{(2)}-q_l^{(2)}|^3}\Big| \lesssim \varepsilon_0^{-3}(|q_k^{(1)}  - q^{(2)}_k|+|q_l^{(1)}  - q^{(2)}_l|),
$$
so that we get
$$
2\sum_{k\neq l} Z_kZ_l \Big( \frac{q_k^{(1)}-q_l^{(1)}}{|q_k^{(1)}-q_l^{(1)}|^3} -\frac{q_k^{(2)}-q_l^{(2)}}{|q_k^{(2)}-q_l^{(2)}|^3}\Big)\leq C \sum_{k\neq l} |Z_kZ_l| T \varepsilon_0^{-3} \sup_k \|\dot q^{(1)}_k - \dot q^{(2)}_k\|_{L^\infty}.
$$

For the other part of the difference, we have 
$$
\an{\Psi_{q^{(1)}}(u_0)|\frac{q_k^{(1)}-x}{|q_k^{(1)}-x|^3} |\Psi_{q^{(1)}}(u_0)}-\an{\Psi_{q^{(2)}}(u_0)|\frac{q_k^{(2)}-x}{|q_k^{(2)}-x|^3} |\Psi_{q^{(2)}}(u_0)} = I + II +III
$$
with
\begin{eqnarray*}
I &=& \an{\Psi_{q^{(1)}}(u_0)- \Psi_{q^{(2)}}(u_0)|\frac{q_k^{(1)}-x}{|q_k^{(1)}-x|^3} |\Psi_{q^{(1)}}(u_0)} \\
II &=& \an{\Psi_{q^{(2)}}(u_0)|\Big( \frac{q_k^{(1)}-x}{|q_k^{(1)}-x|^3} -\frac{q_k^{(2)}-x}{|q_k^{(2)}-x|^3}\Big)| \Psi_{q^{(1)}}(u_0)} \\
III &=& \an{\Psi_{q^{(2)}}(u_0)|\frac{q_k^{(2)}-x}{|q_k^{(2)}-x|^3}|\Big( \Psi_{q^{(1)}}(u_0)- \Psi_{q^{(2)}}(u_0) \Big)}
\end{eqnarray*}

We deal with the three terms separately. For the term $I$ (and in fact $III$) we can write, with $a\in (0,1)$, and $b\in \R$

\begin{eqnarray*}
|I| &\leq& \int_{\R^3}\frac{|\Psi_{q^{(1)}}(u_0)- \Psi_{q^{(2)}}(u_0)||\Psi_{q^{(1)}}(u_0)|}{|q_k^{(1)}-x|^2}
\\
&=&
\int_{\R^3}
\frac{|\Psi_{q^{(1)}}(u_0)|}{|q_k^{(1)}-x|^{\sigma}}\frac{|\Psi_{q^{(1)}}(u_0)- \Psi_{q^{(2)}}(u_0)|^{a}}{|q_k^{(1)}-x|^{b}}
\frac{|\Psi_{q^{(1)}}(u_0)- \Psi_{q^{(2)}}(u_0)|^{1-a}}{|q_k^{(1)}-x|^{2-\sigma-b}}
\end{eqnarray*}
and by H\"older's inequality,
$$
|I|
\leq C
\left\|\frac{\Psi_{q^{(1)}}(u_0)}{|q_k^{(1)}-x|^{\sigma}}\right\|_{L^2}
\left\|\frac{(\Psi_{q^{(1)}}(u_0)- \Psi_{q^{(2)}}(u_0))^{a}}{|q_k^{(1)}-x|^{b}}\right\|_{L^{2/a}}
\left\|\frac{(\Psi_{q^{(1)}}(u_0)- \Psi_{q^{(2)}}(u_0))^{1-a}}{|q_k^{(1)}-x|^{2-\sigma-b}}\right\|_{L^{2/(1-a)}},
$$
which identifies as
$$
|I|
\leq C
\left\|\frac{\Psi_{q^{(1)}}(u_0)}{|q_k^{(1)}-x|^{\sigma}}\right\|_{L^2}
\left\|\frac{(\Psi_{q^{(1)}}(u_0)- \Psi_{q^{(2)}}(u_0)}{|q_k^{(1)}-x|^{b/a}}\right\|_{L^{2}}^a
\left\|\frac{(\Psi_{q^{(1)}}(u_0)- \Psi_{q^{(2)}}(u_0))}{|q_k^{(1)}-x|^{(2-\sigma-b)/(1-a)}}\right\|_{L^{2}}^{1-a}.
$$

To conclude we need
$$
\begin{cases}
\frac{b}{a}<\sigma-1\\
\frac{2-\sigma-b}{1-a}\leq \sigma.
\end{cases}
$$
Taking the equality in the last inequalies, we get $a = 2(\sigma-1)$, and this justifies the condition $\sigma\in (1,3/2) \Leftrightarrow a\in (0,1)$, and $b = 2(\sigma-1)^2$.

Therefore one gets, again by  \eqref{genhardy} with $s$ and $s_1$ and Proposition \ref{prop-flow}, 
\begin{eqnarray*}
|I| &\lesssim & \|\Psi_{q^{(1)}}(u_0)\|_{H^\sigma}(\|\Psi_{q^{(1)}}(u_0)\|_{H^\sigma}+\|\Psi_{q^{(2)}}(u_0)\|_{H^\sigma})^{1-a}\|\Psi_{q^{(1)}}(u_0)- \Psi_{q^{(2)}}(u_0)\|_{H^{\sigma-1}}^{a}
\end{eqnarray*}
which ensures 
$$
|I| \lesssim \|u_0\|_{H^\sigma}^2 \tilde T^a \sup_k \|\dot q_k^{(1)} - \dot q_k^{(2)}\|_{L^\infty}.
$$

To deal with the term $II$ we have, for $\alpha \in (0,1)$
\begin{eqnarray*}
|II| &\leq& \int_{\R^3} |\Psi_{q^{(1)}}(u_0)||\Psi_{q^{(2)}}(u_0)|\left(\frac{|q_k^{(1)}-q_k^{(2)}|^{\alpha}}{|q_k^{(1)}-x|^{2+\alpha}}+\frac{|q_k^{(1)}-q_k^{(2)}|^{\alpha}}{|q_k^{(1)}-x|^{2+\alpha}}\right)
\\
&\leq&C
\nonumber
|q_k^{(1)}-q_k^{(2)}|^{\alpha}\left\| \frac{\Psi_{q^{(1)}}(u_0)}{|x|^{1+\alpha/2}}\right\|_{L^2}\left\| \frac{\Psi_{q^{(2)}}(u_0)}{|x|^{1+\alpha/2}}\right\|_{L^2}
\\
\nonumber
&\leq&C
|q_k^{(1)}-q_k^{(2)}|^{\alpha}
\left\|\Psi_{q^{(1)}}(u_0)\right\|_{H^{1+\alpha/2}}\left\|\Psi_{q^{(2)}}(u_0)\right\|_{H^{1+\alpha/2}}
\\
\nonumber
&\leq&C
|q_k^{(1)}-q_k^{(2)}|^{\alpha}\| u_0\|_{H^{1+\alpha/2}}^2.
\end{eqnarray*}
Taking then $\alpha=2\sigma-2$ and using Proposition \ref{prop-flow} we get in the end
$$
|II| \leq C\| u_0\|_{H^\sigma}^2  \sup_k \| q^{(1)}_k -  q^{(2)}_k\|_{L^\infty}^{2s-2}
\leq  CT \| u_0\|_{H^\sigma}^2  \sup_k \|\dot q^{(1)}_k - \dot q^{(2)}_k\|_{L^\infty}^{2s-2};
$$
note that $(2\sigma-2)\in(0,1)$ for $\sigma\in(1,3/2)$, so that we obtain the H\"olderianity of this term, and the proof is concluded
\end{proof}


As a consequence, we are able to prove the following result.

\begin{proposition}\label{prop-schauder} 
Let $\sigma\in[1,3/2)$. There exists $C=C(m)$ such that for all $(a_k)$ and $(b_k)$ such that for all $l\neq k$, $|a_k-a_l|\geq 8\varepsilon_0$, $(b_k)_k\leq \frac{C_1}{4}$, for all $u_0\in H^\sigma$, the system of equations
$$
m_k\ddot{q}_k=  F(q)_k
$$
with initial data $q_k(t=0) = a_k$ and $\dot q_k(t=0)=b_k$ admits a solution in $\mathcal C^2([0,T]$ for $T \leq  \frac1{C (1_{N\geq 2}+\|u_0\|_{H^\sigma}^2)}$.
\end{proposition}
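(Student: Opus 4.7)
The plan is to apply the Schauder fixed point theorem to the map $P$ constructed before the proposition, viewed as acting on $B$ regarded as a subset of the Banach space $X:=\mathcal C^1([t_0,t_0+T],(\R^3)^N)$.

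\smallskip
\emph{Structure of $B$.} I would first observe that, provided $T\le 4\varepsilon_0/C_1$, the non-collision requirement in the definition of $B$ follows automatically from the velocity bound and the initial separation, since
$$
|q_k(t)-q_l(t)| \ \ge\ |a_k-a_l|\,-\,2T\sup_{k,t}|\dot q_k(t)|\ \ge\ 8\varepsilon_0-C_1 T\ \ge\ 4\varepsilon_0.
$$
Once this is removed, $B$ is cut out by linear conditions (the fixed initial datum $q_k(t_0)=a_k$, the $L^\infty$ bound on $\dot q_k$, the $L^1$ bound on $\ddot q_k$, and the smallness condition on $T\sup_k\|\dot q_k\|_{L^\infty}$). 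These are manifestly convex and closed in the $X$-topology, so $B$ is a nonempty (take $q_k(t)=a_k+(t-t_0)b_k$), convex, closed subset of $X$.

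\smallskip
\emph{Hypotheses of Schauder.} Proposition~\ref{stableprop} directly yields $P(B)\subset B$ for $T$ as in its statement. Proposition~\ref{regp} then shows that, for $\sigma\in(1,\tfrac32)$, the restriction $P\colon B\to B$ is H\"older continuous with exponent $2\sigma-2>0$ with respect to the $\mathcal C^1$-norm on velocities, hence continuous in the topology of $X$. For the compactness of $P(B)$, I would use the pointwise bound
$$
|F(q)_k(t)|\ \le\ C |Z_k|\,\|\Psi_q(t)u_0\|_{H^1}^2 + C \varepsilon_0^{-2} \sum_{l\neq k}|Z_kZ_l|,
$$
obtained as in the proof of Proposition~\ref{stableprop}, which by Proposition~\ref{prop-flow}(i) is bounded uniformly in $t$ and in $q\in B$ by a constant $M=M(u_0,Z_k,\varepsilon_0)$. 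Hence $\ddot{P(q)}_k=m_k^{-1}F(q)_k$ is uniformly bounded in $L^\infty$, so the family $\{\dot{P(q)}_k\}_{q\in B}$ is uniformly bounded and equi-Lipschitz in $t$; Arzel\`a-Ascoli, together with the pointwise initial condition $\dot{P(q)}_k(t_0)=b_k$, gives precompactness of $P(B)$ in $X$.

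\smallskip
\emph{Conclusion and the $\sigma=1$ endpoint.} The Schauder fixed point theorem then produces $q\in B$ with $P(q)=q$, i.e. a $\mathcal C^2$ solution of the Newton system with the prescribed initial data, the regularity $\mathcal C^2$ following from the continuity in $t$ of $F(q)$. The case $\sigma=1$, not covered directly by Proposition~\ref{regp} since the H\"older exponent degenerates to $0$, would be treated by approximation: given $u_0\in H^1$, pick $u_0^{(n)}\to u_0$ in $H^1$ with $u_0^{(n)}\in H^{\sigma'}$ for some $\sigma'\in(1,\tfrac32)$, apply the previous construction to obtain solutions $q^{(n)}$, and extract an accumulation point in $X$ via the same Arzel\`a-Ascoli bound. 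Passage to the limit in the fixed-point equation is legitimate thanks to the $H^{\sigma-1}$-continuity in $q$ of the propagator provided by Theorem~\ref{teo1}, combined with the stability of the nonlinear flow $\Psi_q$ from Proposition~\ref{prop-flow}. This $\sigma=1$ limiting argument is the main anticipated obstacle; the rest of the argument is an essentially routine packaging of the preceding lemmas into the Schauder framework, the non-trivial content having already been placed in Propositions~\ref{stableprop} and \ref{regp}.
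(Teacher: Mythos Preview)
Your approach is the same Schauder fixed point argument as the paper's, carried out in the $\mathcal C^1$ topology using Propositions~\ref{stableprop} and~\ref{regp}; the paper's own proof is a three-line sketch to this effect. Your version is more careful on compactness: rather than asserting that $B$ is compact in $\mathcal C^1$ (the paper's phrasing, which is a bit loose since the defining constraint is only $\|\ddot q_k\|_{L^1}\le R$), you extract precompactness of $P(B)$ from the uniform $L^\infty$ bound on $F(q)_k$ and Arzel\`a--Ascoli, which is the cleaner route.

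One small slip: your claim that $B$ is closed in $X=\mathcal C^1$ is not correct as stated, since the condition $\sup_k\|\ddot q_k\|_{L^1}\le R$ is not preserved under $\mathcal C^1$ limits. This is harmless for the argument, because once you know $P(B)$ is precompact with a uniform $L^\infty$ bound on the second derivatives, you can run Schauder on the closed convex hull of $P(B)$ (which then sits inside a $W^{2,\infty}$ ball on which $U_q$ and hence $P$ are still defined); alternatively, replace the $L^1$ constraint in $B$ by the $L^\infty$ bound you derive for $F$. Your treatment of the $\sigma=1$ endpoint by approximation goes beyond what the paper writes; the paper's proof invokes~\eqref{hold} without commenting on the degeneration of the H\"older exponent at $\sigma=1$.
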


\begin{proof} This is a Schauder fixed point argument for $P$ in
\begin{multline*}
B(0,T) = \{ q\in \mathcal C^2([0,T],(\R^3)^N) | T\sup_k \|\dot q_k\|_{L^\infty} \leq M_0, \\
\sum_k \|\dot q_k\|_{L^\infty}\leq C_1, \sup_k \|\ddot q_k\|_{L^1}\leq R, q_k(0) = a_k, \forall k\neq l, \forall t |q_k - q_l|> 4\varepsilon_0\}
\end{multline*}
for the topology of $\mathcal C^1([0,T])$,  as a consequence of Proposition \ref{stableprop} and \eqref{hold} in Proposition \ref{regp}

Note that $B(0,T)$ is compact in $\mathcal C^1([0,T])$ because of the boundedness of the $\mathcal C^2$ norm.

\end{proof}

\vskip20pt
\section{Appendix: a remark on regularity of the ground state of Dirac-Coulomb Hamiltonian}
Let us consider the Sobolev regularity of Dirac-Coulomb eigenstates when $Z<\sqrt3/2$.\\ In particular, we are interested in the ground state.
Its generic component has the form (see \cite{LL-4, GalMich18})

\begin{equation}
f(r)=\text{const}\times e^{-a r} r^{b-1}\ 
\end{equation}

where
\begin{equation} a=Z\ ,\ \ \ \ \ b=\sqrt{1-(\alpha Z)^2} \equiv \sqrt{1- \nu^2},\ \ \ \ \nu\in (0,1)
\end{equation}
The Fourier transform of the above radial function satisfies

$$
\hat f(k)=\frac{2}{\sqrt2\pi}\frac{1}{k}\int_0^{\infty}\ rf(r)\sin(kr)\ dr
$$
The above integral is the Fourier sine-transform of $rf(r)=\text{const}\times e^{-a r} r^b$.
\vskip5pt
According to the Table of Integral Transforms I, {\it Bateman Manuscript Project}, formula 2.4 (7) pag 72 in \cite{Bateman54}, one has
\begin{align}
\hat f(k) =&\frac{2}{\sqrt2\pi}\frac{1}{k} \Gamma(b +1)(a^2+k^2)^{-\frac{1}{2}(b+1)} \sin[(b+1)\tan^{-1}(\frac{k}{a})]\\=&\text{const}\times\frac{1}{k} (a^2+k^2)^{-\frac{1}{2}(b+1)} \sin[(b+1)\tan^{-1}(\frac{k}{a})]
\end{align}
This is regular at the origin while the asymptotic behavior at infinity is given by
\begin{equation}
\hat f \sim k^{-(b+2)}
\end{equation}
Now $f\in H^\sigma\iff |\hat f|^2(1+k^2)^s$ is integrable in $\R^3$, from which we get the condition $f\in H^\sigma\iff k^{(-2b-4)}(1+k^2)^sk^2$ is integrable at infinity, i.e.
\begin{equation}
2b+2-2\sigma>1 \ \ \ \iff \ \ \ \sqrt{1-\nu^2}>s-\frac{1}{2}\ \ 
\end{equation}
This implies that 
\begin{enumerate}
\item $f\in H^1\ \  \forall \:\nu \in (0,\frac{\sqrt{3}}{2})$\\
\item $f\notin H^{\frac{3}{2}}\ \ \text{whatever}\ \ \ \nu\in (0,\frac{\sqrt{3}}{2})$\\
\item $f\in H^\sigma\ \ \ \sigma=\sigma(\nu)=\frac{3}{2}-\epsilon$ with $\nu^2<2\epsilon-\epsilon^2$ 
\end{enumerate}
So the regularity is better and better (but lower than $H^{3/2}$) with the decreasing of the charge Z.

\end{document}